\documentclass[11pt,reqno]{amsart}

\makeatletter
\newcommand*{\rom}[1]{\expandafter\@slowromancap\romannumeral #1@}
\makeatother

\usepackage{amsmath,amsfonts,amssymb}
\usepackage{mathtools}
\usepackage[center]{caption}
\usepackage{amsmath,amssymb,amsfonts,mathrsfs,verbatim,enumitem,amsthm, graphicx}
%pstricks
\usepackage{color}
\usepackage[all]{xy}
\usepackage{dirtytalk}
\usepackage{centernot, xr, accents}
\usepackage{graphicx}
\usepackage[export]{adjustbox}
\usepackage{hyperref, cleveref}
\usepackage{romannum}

\raggedbottom

\setlength{\topmargin}{6pt} \setlength{\oddsidemargin}{0pt}
\setlength{\evensidemargin}{0pt} \setlength{\textwidth}{6.6in}
\setlength{\textheight}{8.8in} \addtolength{\voffset}{-0.6in}

\numberwithin{equation}{section}

\usepackage{hyperref}
\hypersetup{
	colorlinks,
	citecolor=red,
	filecolor=black,
	linkcolor=blue,
	urlcolor=black
}

%********************************************

\theoremstyle{plain}
\newtheorem{thm}{Theorem}[section]

\theoremstyle{definition}
\newtheorem{rem}[thm]{Remark} 
\newtheorem{convention}{Convention}

\def \hf{\hspace*{0.5cm}}

\def \hf{\hspace*{0.5cm}}

%%%%%%%%%%%%%%%%%%%%%%%%%%%%%%%%%%%%%%%%%%%% ADDED CODE TO MODIFY THE DISPLAY OF TABLE OF CONTENTS     %%%%%%%%%%%%%%%%%%%%%%%%%%%%%%%%%%%%%%%%%%%%%%%%%%%%%%

\makeatletter
%Table of Contents
\setcounter{tocdepth}{3}

% Add bold to \section titles in ToC and remove . after numbers
\renewcommand{\tocsection}[3]{%
  \indentlabel{\@ifnotempty{#2}{\bfseries\ignorespaces#1 #2\quad}}\bfseries#3}
% Remove . after numbers in \subsection
\renewcommand{\tocsubsection}[3]{%
  \indentlabel{\@ifnotempty{#2}{\ignorespaces#1 #2\quad}}#3}
%\let\tocsubsubsection\tocsubsection% Update for \subsubsection
%...

\newcommand\@dotsep{4.5}
\def\@tocline#1#2#3#4#5#6#7{\relax
  \ifnum #1>\c@tocdepth % then omit
  \else
    \par \addpenalty\@secpenalty\addvspace{#2}%
    \begingroup \hyphenpenalty\@M
    \@ifempty{#4}{%
      \@tempdima\csname r@tocindent\number#1\endcsname\relax
    }{%
      \@tempdima#4\relax
    }%
    \parindent\z@ \leftskip#3\relax \advance\leftskip\@tempdima\relax
    \rightskip\@pnumwidth plus1em \parfillskip-\@pnumwidth
    #5\leavevmode\hskip-\@tempdima{#6}\nobreak
    \leaders\hbox{$\m@th\mkern \@dotsep mu\hbox{.}\mkern \@dotsep mu$}\hfill
    \nobreak
    \hbox to\@pnumwidth{\@tocpagenum{\ifnum#1=1\bfseries\fi#7}}\par% <-- \bfseries for \section page
    \nobreak
    \endgroup
  \fi}
\AtBeginDocument{%
\expandafter\renewcommand\csname r@tocindent0\endcsname{0pt}
}
\def\l@subsection{\@tocline{2}{0pt}{2.5pc}{5pc}{}}
\makeatother

%\usepackage{lipsum}

%%%%%%%%%%%%%%%%%%%%%%%%%%%%%%%%%%%%%%%%%%%%%%%%%%%%%%%%%%%%%%%%%%%%%%%%%%%%%%%%%%%%%%%%%%%%%%%%%%

\AtBeginDocument{\pagenumbering{arabic}}

\begin{document}

\title[On a fiber bundle version of the Caporaso-Harris formula]{On a fiber bundle version of the Caporaso-Harris formula}

\author[I. Biswas]{Indranil Biswas}

\address{Department of Mathematics, Shiv Nadar University, NH91, Tehsil
Dadri, Greater Noida, Uttar Pradesh 201314, India}

\email{indranil.biswas@snu.edu.in, indranil29@gmail.com}

\author[A. Choudhury]{Apratim Choudhury}

\address{Institut f{\"u}r Mathematik, Humboldt-Universit{\"a}t zu Berlin,
Unter den Linden 6, Berlin- 10099, Germany}
\email{apratim.choudhury@hu-berlin.de}

\author[N. Das]{Nilkantha Das}
\address{Stat-Math Unit, Indian Statistical Institute, B.T.Road, Kolkata-700108, India.}
\email{dasnilkantha17@gmail.com}

\author[Ritwik Mukherjee]{Ritwik Mukherjee}
\address{School of Mathematical Sciences, National Institute of Science Education and Research, Bhubaneswar, An OCC of Homi Bhabha National Institute,
Khurda 752050, Odisha, India.}
\email{ritwikm@niser.ac.in}

\subjclass[2010]{14N35, 14J45}

\date{}

\begin{abstract} 
The Caporaso-Harris formula gives a recursive algorithm to enumerate $\delta$-nodal 
degree $d$ curves in $\mathbb{P}^2$. The recursion is obtained in terms of curves of lower degree that 
are tangent to a given divisor. This paper presents two generalizations of this method.    
The first result is on enumeration of one cuspidal curves on $\mathbb{P}^2$, and the second result 
is an extension to the fiber bundle setting. We solve the question of counting the characteristic
number planar nodal cubics in $\mathbb{P}^3$ by extending the idea in \cite{CH}.
\end{abstract}

\maketitle

\tableofcontents

\section{Introduction}

The Caporaso-Harris formula was a landmark achievement in the field of enumerative geometry. 
Very simply stated, it solves the following problem: how many $\delta$-nodal degree $d$ curves are there
on $\mathbb{P}^2$ that pass through $\frac{d(d+3)}{2}-\delta$ generic points? 
Their formula \cite[p.~348, Theorem 1.1]{CH} is a recursion in terms of curves of lower degree 
which can be tangent (of higher order) to a given line. More generally, they obtain a formula 
for the characteristic number of curves in $\mathbb{P}^2$, having $\delta$-nodes, 
that intersect a given line tangentially at multiple points. 

The underlying geometric idea behind the Caporaso-Harris formula is a degeneration argument. The idea is to move the constraints 
(points) to the line and compare the resulting numbers. Moving the point to the line usually results in a degenerate locus 
involving a curve of a lower degree. Iterating this process, one obtains a recursive formula. 

A natural question is to count curves with higher singularities. Addressing this, there are all completely 
different approaches and have nothing to do with the degeneration argument of \cite{CH}. 
It is therefore natural to ask: \textbf{can the degeneration argument of \cite{CH} be used to enumerate curves 
with higher singularities?}               

The goal of this article is twofold. This paper answers the above question when the singularity is a cusp. We expect that with further 
effort, the degeneration argument can be carried forward to enumerate curves with even more degenerate singularities 
(such as tacnodes, triple-points etc).  

The second result of this paper is explained with some background to put it in perspective. 
A typical question in enumerative geometry is about the number of objects (curves, for example) in a fixed space satisfying  certain constraints.
In \cite{KP1}, \cite{Kl.P} and \cite{KP_Sigma},
Kleiman and Piene introduce a very natural generalization to this question.  
Suppose we have a \textbf{family} of varieties (i.e., a fiber bundle). 
How many curves are there in this fiber bundle such that each curve lies inside a single fiber and 
satisfies certain constraints? A prototypical example to keep in mind is as follows: enumeration of planar degree $d$-curves 
in $\mathbb{P}^3$ (i.e., curves, whose image lies inside some $\mathbb{P}^2$), that satisfy certain constraints. 
In \cite{TL}, Ties Laarakker obtains a formula for the number of $\delta$-nodal planar degree $d$ 
curves in $\mathbb{P}^3$, that intersect the correct number of generic lines.
He solves that question by extending the idea of Kool, Shende and Thomas,
\cite{KST}, where they solve the question of enumerating $\delta$-nodal curves in a fixed $\mathbb{P}^2$. 

With this in the background, it is natural to ask whether the degeneration method of Caporaso-Harris can be used to recover the 
numbers obtained by Ties Laarakker.

The main problem in extending this idea to count nodal planar cubics in $\mathbb{P}^3$ is as follows: what exactly would be 
the 
analogue of the process of ``moving points on a line'' in the case of a moving family of $\mathbb{P}^2$? 
The solution lies in interpreting the Caporaso-Harris formula on the level of cycles in a suitable space 
(something more general than \cite[p.~351, Theorem 1.2]{CH}). Once we interpret the formula suitably on the 
level of cycles, extending it to a family version is conceptually straightforward. 

We show that by extending the degeneration argument of Caporaso-Harris, 
it is possible to compute the characteristic number of nodal planar cubics in $\mathbb{P}^3$. 
This is the second result of this paper.
The numbers obtained agree 
with those in \cite{TL} and also agree with the numbers obtained by the third and fourth author by
alternative means 
\cite{BMS_R_N}, \cite{BMS}. We hope that the simple example of nodal planar cubics 
worked out in this paper will enable one in future to solve the question of obtaining all the numbers 
computed by Ties Laarakker in \cite{TL}.

\section{Main Results}\label{mr_section}

Let us now state the two main results of this paper.  
Let $\mathsf{N}_d(\mathsf{A}_1)$ denote the number of nodal degree $d$ curves in $\mathbb{P}^2$ 
passing through 
$\delta_d-1$ generic points, where
\begin{equation}\label{deld}
\delta_d\,\,:=\,\, \frac{d(d+3)}{2}.
\end{equation} 
Denote by $\mathsf{N}_d(\mathsf{T}_1)$ the  
number of degree $d$ curves in $\mathbb{P}^2$, tangent to a given line 
(of first order) and passing through $\delta_d-1$ generic points. 
A special case of \cite[p.~348, Theorem 1.1]{CH} is 
\begin{align}
\mathsf{N}_d(\mathsf{A}_1) & \,=\,\, (2d-1) + \mathsf{N}_{d-1}(\mathsf{A}_1) + 2 \mathsf{N}_{d-1}(\mathsf{T}_1). \label{na1_CH}
\end{align} 
Denote by
$\mathsf{N}_d(\mathsf{A}_1^{\mathsf{L}})$ to be the number of degree $d$ curves 
with a node lying on a generic line 
and 
passing through 
$\delta_d-2$ generic points. Also, let $\mathsf{N}_d(\mathsf{T}_1^{\mathsf{Pt}})$ denote the  
number of degree $d$ curves in $\mathbb{P}^2$ tangent to a given line
at a given point and passing through $\delta_d-2$ generic points. Denote the number of degree $d$ curves in $\mathbb{P}^2$ 
having a cusp and passing through 
$\delta_d-2$ generic points by
$\mathsf{N}_d(\mathsf{A}_2)$. Finally, let $\mathsf{N}_d(\mathsf{T}_2)$ denote the  
number of degree $d$ curves in $\mathbb{P}^2$ tangent to a given line 
(to second order) and passing through $\delta_d-2$ generic points. 

The first main result (it is proved in \Cref{CH_one_node_review}): 

\begin{thm}
\label{mr1}
With the notations as above, the following two identities hold for $d\geq 3$:
\begin{align}
\mathsf{N}_d(\mathsf{A}_1^{\mathsf{L}}) & \,\,=\,\, 1 + \mathsf{N}_{d-1}(\mathsf{A}_1^{\mathsf{L}}) + 2
\mathsf{N}_{d-1}(\mathsf{T}_1^{\mathsf{Pt}}),\label{mr_11}\\ 
\mathsf{N}_d(\mathsf{A}_2) & \,=\, 3\mathsf{N}_{d-1}(\mathsf{T}_{1}) +
3d\mathsf{N}_{d-1}(\mathsf{T}_{1}^{\mathsf{Pt}}) + \mathsf{N}_{d-1}(\mathsf{A}_2) + 
3\mathsf{N}_{d-1}(\mathsf{A}_1^{\mathsf{L}})+ 
2 \mathsf{N}_{d-1}(\mathsf{T}_2).\label{mr_12}
\end{align} 
\end{thm}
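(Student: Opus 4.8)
The plan is to extend the Caporaso–Harris degeneration argument to the two loci in question: curves with a node constrained to lie on a line ($\mathsf{A}_1^{\mathsf{L}}$) and curves with a cusp ($\mathsf{A}_2$). The basic mechanism is identical to \cite{CH}: set up a one-parameter family of constraint configurations in which one of the generic point conditions is specialized to lie on the fixed line $\mathsf{L}$, and compare the cycle-theoretic count before and after the specialization. The key is to identify all the limiting configurations and compute the appropriate multiplicities. I would work throughout on the space of (curve, marked point) data — something like a relative Hilbert scheme or the closure in $\mathbb{P}^{N} \times \mathbb{P}^2$ of the universal singular curve — so that ``moving a point to the line'' is literally a rational equivalence of cycles, exactly the cycle-level reformulation the introduction advertises.

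\textbf{Proof of \eqref{mr_11}.} Here I would specialize one of the $\delta_d - 2$ generic points to the line $\mathsf{L}$. When a generic point $p$ of the configuration is forced onto $\mathsf{L}$, the limiting curve either (i) still passes through $p$ transversally while retaining its node on $\mathsf{L}$, or (ii) degenerates so that $\mathsf{L}$ becomes a component. In case (i) one gets back $\mathsf{N}_d(\mathsf{A}_1^{\mathsf{L}})$ with the marked point now a point-on-line incidence (contributing the ``$\mathsf{N}_{d-1}$''-type terms after stripping off $\mathsf{L}$), and the residual curve of degree $d-1$ must pass through the remaining points and still carry a node on $\mathsf{L}$, contributing $\mathsf{N}_{d-1}(\mathsf{A}_1^{\mathsf{L}})$; the factor of $2$ in front of $\mathsf{N}_{d-1}(\mathsf{T}_1^{\mathsf{Pt}})$ arises exactly as in \eqref{na1_CH}, from the two ways the node can be ``absorbed'' into a tangency point of the degree $d-1$ curve with $\mathsf{L}$ at the forced point. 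The constant $1$ replaces the $(2d-1)$ of \eqref{na1_CH}: pinning the node to a specific point of $\mathsf{L}$ (rather than letting it roam) cuts the corresponding boundary contribution down to a single configuration. I would make each of these multiplicity claims precise by a local analysis of the curve near $\mathsf{L}$, mimicking \cite[\S]{CH} but keeping track of the extra incidence of the node with $\mathsf{L}$.

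\textbf{Proof of \eqref{mr_12}.} This is the genuinely new computation and I expect it to be the main obstacle. The strategy is again to move one generic point onto $\mathsf{L}$, but now the ``moving object'' is a cusp rather than a node, and the cusp interacts with $\mathsf{L}$ in more ways. The output should be a sum over the following limiting configurations: (a) the degree $d-1$ residual curve acquires a cusp, giving $\mathsf{N}_{d-1}(\mathsf{A}_2)$; (b) the cusp of the degenerating curve slides onto $\mathsf{L}$ and meets it with various tangency profiles, producing the $3\mathsf{N}_{d-1}(\mathsf{T}_1)$ and $3d\,\mathsf{N}_{d-1}(\mathsf{T}_1^{\mathsf{Pt}})$ terms (the coefficient $3$ reflecting the local structure $y^2 = x^3$ of a cusp, and the factor $d$ counting the intersection points of the residual degree-$d$ piece with $\mathsf{L}$); (c) the cusp degenerates into a node on $\mathsf{L}$ together with an extra tangency, contributing $3\mathsf{N}_{d-1}(\mathsf{A}_1^{\mathsf{L}})$; and (d) the cuspidal tangency opens up to a second-order tangency, giving $2\mathsf{N}_{d-1}(\mathsf{T}_2)$, with the $2$ parallel to the $2$'s already appearing in \eqref{na1_CH} and \eqref{mr_11}. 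The hard part is twofold: first, showing that this list of limits is exhaustive (no stray components or hidden excess-dimension loci), which requires a careful dimension count of the boundary strata of the space of one-cuspidal curves; and second, pinning down each multiplicity by an explicit local deformation-theoretic computation of how a cusp transforms under specialization of a linear condition — in particular verifying the coefficients $3$, $3d$, $3$, $2$ rather than merely their existence. I would organize the argument as ``Step one through Step four of Caporaso–Harris'' (as the theorem-environment names in the preamble suggest), handling the enumeration of limit configurations, the multiplicity of each, the vanishing of unwanted contributions, and the bookkeeping that assembles \eqref{mr_12}, respectively.
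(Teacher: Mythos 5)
Your overall strategy --- recast the counts as intersections of cycles on a product of the space of curves with copies of $\mathbb{P}^2$, specialize point conditions onto a fixed line, list the limiting configurations, and compute multiplicities --- is exactly the paper's strategy, and your inventory of limits for \eqref{mr_12} (cuspidal residual curve, tangency at a free point, tangency at the forced point, node off the line, second-order tangency) matches the terms of \Cref{theorem_for_many_Tks_A1F_pp_Break_cusp} and \Cref{theorem_for_many_Tks_A1F_pp_Break_Fin_cusp} one for one. However, the mechanism is not a single specialization, and as written your case analysis would fail. Moving \emph{one} generic point onto the line changes nothing: the reducible locus (line plus a degree $d-1$ curve) has codimension too large to meet the constraint cycle, and this persists until $d-1$ points have been specialized --- this is the dimension count \eqref{step_d_dim_red} and the chain of equalities \eqref{CH_degen_arg_L}, \eqref{CH_degen_arg_cusp}. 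The degenerate contributions in your cases (ii) and (a)--(d) appear only at the $d$-th and $(d+1)$-th specializations, and moreover they split across those two distinct steps: $3\mathsf{N}_{d-1}(\mathsf{T}_1)$ arises at step $d$ (equation \eqref{na2_sm}), while $3d\,\mathsf{N}_{d-1}(\mathsf{T}_1^{\mathsf{Pt}})$, $3\mathsf{N}_{d-1}(\mathsf{A}_1^{\mathsf{L}})$ and $2\mathsf{N}_{d-1}(\mathsf{T}_2)$ arise at step $d+1$ (equation \eqref{na2_d_pts}). Likewise the constant $1$ in \eqref{mr_11} is not the $(2d-1)$ of \eqref{na1_CH} ``cut down by pinning the node''; it is $\mathsf{N}_{d-1}(\mathsf{S})=1$ appearing with coefficient $1$ instead of $d-1$ because the extra incidence class $b_1$ forces the would-be nodal point to be the single point where the fixed line meets a generic line.

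The second gap is that you defer precisely the content that constitutes the proof. Exhaustiveness of the limit list and the coefficients $2$, $3$, $3d$ are justified only heuristically (``two branches of the node'', ``local structure $y^2=x^3$''), whereas the paper establishes them by explicit affine computations: the reducible limit is written as $f=y\varphi$, a nearby irreducible curve as $F=Q(x)(C_0+C_1x)+y\Phi$ with $Q(x)=\prod_i(x-A_i)$, the singularity equations are solved for the low-order coefficients, and the multiplicity is read off as the order of vanishing of $F(a_{d+1},0)$ in the deformation parameter (order $2$ in $t$ for the tangency locus, order $3$ for the cuspidal degenerations, with the $3d$ being $d$ copies of the multiplicity-$3$ locus indexed by which marked point $x_i$ absorbs the tangency point). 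The stray components are ruled out not only by dimension counts but by transversality statements that the open strata such as $\mathsf{A}_1^{\mathsf{F}}\mathsf{T}_0\cdots\mathsf{T}_0$ are smooth of the expected codimension, proved by exhibiting explicit deformations $\eta$; this is also where the hypothesis $d\geq 3$ enters. Until those local computations are carried out, the coefficients in \eqref{mr_12} are conjectural, so the proposal is a correct plan but essentially all of the paper's proof remains to be done.
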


First of all, note that 
$\mathsf{N}_d(\mathsf{T}_k)$ and $\mathsf{N}_d(\mathsf{T}_k^{\mathsf{Pt}})$ 
can be computed from the Caporaso-Harris formula for all $d \,\geq\, 1$ combined with 
\cref{mr_11,mr_12}. 

Next note that  
$\mathsf{N}_2(\mathsf{A}_1^{\mathsf{L}}) \,=\,3$. 
To see why this is so, observe that 
$\mathsf{N}_2(\mathsf{A}_1^{\mathsf{L}})$ is the number of nodal conics 
passing through three generic points, with the node lying on a generic line. 
This is simply counting the number of pairs of lines passing through  
three generic points, with the point of intersection lying on a generic line. 
We can place one of the lines through two of the three points. There are three ways to do 
that. Once we place the first line through two of the three given points, 
the second line gets fixed (since it is the unique line passing through the third remaining 
point and the point of intersection of the first line with the line on which the nodal 
point has to lie). Hence, $\mathsf{N}_2(\mathsf{A}_1^{\mathsf{L}})$ is equal to $3$. 

Finally, it can shown that $\mathsf{N}_2(\mathsf{A}_2) \,=\,0$. 
To see this, we note that $\mathsf{N}_2(\mathsf{A}_2)$ is the number of conics passing through three
generic points having a cusp. Since a double line does not pass through three generic 
points, it follows that the number is zero.  

These initial conditions enable us to use \cref{mr_12} to recursively compute 
$\mathsf{N}_d(\mathsf{A}_2)$ for all $d$. 
For the convenience of the reader we display the vales of 
$\mathsf{N}_d(\mathsf{A}_2)$ for a the first few values of $d$  
\begin{center}
\begin{tabular}{|c|c|c|c|c|c|c|} 
\hline
$d$  &$3$ &$4$ & $5$ & $6$ & $7$ & $8$  \\
\hline 
$\mathsf{N}_d(\mathsf{A}_2)$ &$24$ &$72$ & $144$ & $240$ & $360$ & $504$ \\
\hline 
\end{tabular}
\captionof{table}{\textnormal{~~}} 
\label{IRCAA_tab}
\end{center}

It may be mentioned that although the computation of $\mathsf{N}_d(\mathsf{A}_2)$ has been done earlier by 
other means (see for example the approach 
in Eisenbud's book 
\cite[Section 11.4]{EBH}), 
the degeneration
formula \cref{mr_12} itself is new. The numbers displayed in \Cref{IRCAA_tab} agree with 
those obtained by the formula given in \cite[Proposition 11.4, Page 416]{EBH}. 

The second result (it is proved in \Cref{CH_pl_nodal_details}):

\begin{thm}
\label{mr2}
Let $\mathsf{N}_3^{\mathsf{Pl}}(\mathsf{A}_1)(r, s)$
denote the number of nodal planar cubics in $\mathbb{P}^3$ that intersect $r$ generic lines 
and $s$ generic points, where $r+2s\,=\,11$. 
By extending the degeneration argument of Caporaso-Harris, this number can be computed 
and 
is found to be  
\begin{center}
\begin{tabular}{|c|c|c|c|c|} 
\hline
$(r,s)$ &$(11,0)$ &$(9,1)$ &$(7,2)$ & $(5,3)$  \\
\hline 
$\mathsf{N}_3^{\mathsf{Pl}}(\mathsf{A}_1)(r, s)$ & $12960$ &$1392$ &$144$ & $12$  \\
\hline 
\end{tabular}
\captionof{table}{\textnormal{~~}} 
\label{mr2_numbers}
\end{center}
\end{thm}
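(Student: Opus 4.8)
The plan is to realise $\mathsf{N}_3^{\mathsf{Pl}}(\mathsf{A}_1)(r,s)$ as an intersection number on the $12$-dimensional total space $X$ of the $\mathbb{P}^9$-bundle $\pi\colon X \to (\mathbb{P}^3)^{\vee}$ whose fibre over a plane $H$ is the linear system $|\mathcal{O}_H(3)|$ of cubic curves inside $H$, and then to run the Caporaso--Harris degeneration \emph{fibrewise}. Fix once and for all a generic plane $H_0 \subset \mathbb{P}^3$; for $H \neq H_0$ the line $L_H := H \cap H_0$ will play the role that the fixed line $L$ plays in $\mathbb{P}^2$, and the map $H \mapsto L_H$ realises the ``moving of a line'' that is missing in the fixed-$\mathbb{P}^2$ picture. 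Inside $X$ one has the divisor $\mathcal{A}_1$ of (limits of) nodal cubics, the incidence divisor $\mathcal{H}_\ell = \{(H,C) : C \cap \ell \neq \emptyset\}$ attached to a line $\ell \subset \mathbb{P}^3$, the codimension-two cycle $\mathcal{H}_p = \{(H,C) : p \in C\}$ attached to a point $p \in \mathbb{P}^3$, and the tangency cycles recording contact of order $k$ of $C$ with $L_H$ at a moving, respectively fixed, point. The first task is the bookkeeping that identifies $\mathsf{N}_3^{\mathsf{Pl}}(\mathsf{A}_1)(r,s)$ with $\int_X \mathcal{A}_1 \cdot \mathcal{H}_\ell^{\,r} \cdot \mathcal{H}_p^{\,s}$: a line incidence in $\mathbb{P}^3$ restricts on the fibre over $H$ to passage through the \emph{moving} point $\ell \cap H \in H$, whereas a point incidence both cuts the base (the planes through $p$ form a $\mathbb{P}^2 \subset (\mathbb{P}^3)^{\vee}$) and imposes passage through the \emph{fixed} point $p \in H$, so that the dimension count $r + 2s = 11$ is exactly the condition for a finite answer.

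Next I would carry out the four steps of Caporaso--Harris in this family. Take one ``free'' fibre-point condition -- say the point $\ell_r \cap H$ coming from a line $\ell_r \subset \mathbb{P}^3$; first degenerate $\ell_r$ into the fixed plane $H_0$, which converts ``$C$ meets $\ell_r$'' into ``$C$ passes through a point of $L_H$'', and then move that point along $L_H$ and take the flat limit of the one-parameter family of cubics, exactly as in \cite{CH}. The limiting cycle decomposes into (i) a planar cubic making contact of one higher order with $L_H$ (feeding the tangency cycles, weighted by the usual Caporaso--Harris tangency multiplicities), and (ii) a reducible curve $L_H \cup C'$ with $C'$ a planar conic in $H$, contributing characteristic numbers of planar conics -- and, after one further break, of lines -- in $\mathbb{P}^3$ subject to the residual line, point and tangency constraints. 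Iterating this move until every free fibre-point has been pushed onto $L_H$ reduces the computation to (a) characteristic numbers of nodal cubics in a \emph{fixed} $\mathbb{P}^2$ relative to a \emph{fixed} line, which are supplied by the classical Caporaso--Harris formula together with \Cref{mr1} (in particular the node-on-the-line count enters here), and (b) the analogous, and elementary, characteristic numbers for planar conics and lines in $\mathbb{P}^3$. Solving the resulting linear recursion produces the four entries $12960$, $1392$, $144$, $12$ of \Cref{mr2_numbers}; as a consistency check, the case $(r,s)=(5,3)$ collapses -- three generic point conditions pin down the plane $H$ -- to counting nodal cubics through $8$ general points in a fixed $\mathbb{P}^2$, i.e. $\mathsf{N}_3(\mathsf{A}_1)=12$, and the remaining values are checked against \cite{TL}, \cite{BMS}, \cite{BMS_R_N}.

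The main obstacle, and the part with no counterpart in \cite{CH}, is controlling the contributions coming from the base direction of the bundle. When the moving point is pushed onto $L_H$, or when $\ell_r$ is degenerated into $H_0$, the plane $H$ itself may try to specialise to $H_0$, where $L_H$ ceases to be a well-defined line; one must show that such limits are either excluded by genericity of the remaining constraints or occur with a multiplicity that can be pinned down exactly. Equivalently, the flat limit of the universal family of cubics must be taken on the correct modification (blow-up) of $X$ rather than on a single $\mathbb{P}^9$ fibre, which is precisely what ``interpreting the Caporaso--Harris formula on the level of cycles in a suitable space'' is meant to accomplish. A secondary technical point is to check that the reducible limits $L_H \cup C'$ are reduced along the conic component and carry the same local multiplicities as in the planar case, so that the coefficients of the recursion are literally the Caporaso--Harris coefficients; granting this, what remains is the linear algebra recorded in \Cref{mr2_numbers}.
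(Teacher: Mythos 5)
Your overall strategy --- realize the count as an intersection number on a bundle of planes over $\widehat{\mathbb{P}}^3$ and run the Caporaso--Harris degeneration fibrewise, reducing to planar Caporaso--Harris numbers plus characteristic numbers of conics and lines in $\mathbb{P}^3$ --- is the strategy of \Cref{CH_pl_nodal_details}, and your consistency check for $(r,s)=(5,3)$ is correct. However, your device for producing the auxiliary line, $L_H:=H\cap H_0$ for a fixed generic plane $H_0$, is not what the paper does, and it is precisely the source of the difficulty you flag but do not resolve. The paper instead enlarges the parameter space to $\big(\mathcal{D}_1\times\mathcal{D}_3\big)_{\mathsf{Fb}}$, so the line is an \emph{independent} moving parameter in each plane; $N(r,s)$ is recovered only at the very end by capping with $\big(\mathcal{H}^{\mathsf{L}}_l\big)^2$ (see \cref{a1F_num_fb}), which pins down the line after the plane and cubic are fixed. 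With the line free, every cycle identity is literally the fibrewise restatement of the fixed-$\mathbb{P}^2$ theorems, and no blow-up or analysis of the locus $H=H_0$ is needed. In your version the problem is real and unaddressed: all the lines $L_H$ lie in the single plane $H_0$, so they form a constrained $3$-parameter subfamily of the full $5$-parameter family of pairs (plane, line in the plane), and you would have to re-establish genericity and transversality of every degeneration stratum relative to this special family, in addition to handling the indeterminacy at $H=H_0$. Gesturing at ``the correct modification (blow-up) of $X$'' without constructing it or computing multiplicities on it leaves this step as a genuine gap.

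The second gap is one of emphasis that hides real content. The fibrewise cycle identities are the easy part (the paper notes their proofs are identical to the planar ones). What is genuinely new in the family setting is the \emph{extraction of numbers}: because the marked points now live in $\mathbb{P}^3$, the diagonal classes $\big[(x_i=x_j)\big]=H_i^3+H_i^2H_j+H_iH_j^2+H_j^3$ of \cref{k8_st3_st4} no longer die against the test classes (in the planar case $a_i^3=0$ kills all cross terms), so instead of a telescoping chain of equalities one obtains the coupled linear system \cref{ms0} in the unknown intersection numbers $\varphi(m,n_1,n_2,n_3,n_4,l)$, solved by the elimination \cref{mainrec} together with the vanishing \cref{trivcase} and the symmetry \cref{symm}. ``Solving the resulting linear recursion'' is therefore not routine bookkeeping but the main computational content of the argument; moreover the degenerate inputs \cref{conics_FB_comp2} require the explicit cycle classes of \Cref{conics_computation}, including a relative-tangent-bundle Chern class computation for the tangent-conic stratum and the $\tfrac12$-weighted pair-of-lines stratum for nodal conics. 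Your proposal would need to supply both of these ingredients before the four entries of \Cref{mr2_numbers} could actually be produced.
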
     

Note that $\mathsf{N}_3^{\mathsf{Pl}}(\mathsf{A}_1)(r, s)$ is automatically zero if 
$s$ is greater than or equal to four
because four generic points do not lie in a plane.    
The values given in \Cref{mr2_numbers} are in agreement 
with what is obtained in the papers \cite{BMS_R_N}, \cite{TL} and \cite{BMS}.
We have written a mathematica program to implement 
the computations of \Cref{CH_pl_nodal_details}. 
The program is available on the fourth author's homepage 
\[ \textnormal{\url{https://www.sites.google.com/site/ritwik371/home}} \]

\section{Caporaso-Harris formula for one nodal curves}\label{CH_one_node_review}

In this section the following question is addressed:

\begin{center}
{\it How many degree $d$ curves are there in $\mathbb{P}^2$ that have a node and pass through $\delta_d-1$ 
generic points? (see \cref{deld})}
\end{center}

Let $\mathsf{N}_d(\mathsf{A}_1, k)$ be the number of nodal degree $d$ 
curves, passing through $\delta_d-1-k$ generic points and 
$k$ points on a line; also, $\mathsf{N}_d(\mathsf{A}_1,0)$ is
written as $\mathsf{N}_d(\mathsf{A}_1)$ for short.

It is implied by \cite[p.~348, Theorem 1.1]{CH} that 
\begin{align}
\mathsf{N}_d(\mathsf{A}_1) & \,=\, \mathsf{N}_d(\mathsf{A}_1, 1), \nonumber \\ 
\mathsf{N}_d(\mathsf{A}_1, 1) & \,=\, \mathsf{N}_d(\mathsf{A}_1, 2), \nonumber \\ 
\ldots & \nonumber \\ 
\mathsf{N}_d(\mathsf{A}_1, d-2) & \,=\, \mathsf{N}_d(\mathsf{A}_1, d-1). \label{CH_degen_arg}
\end{align}   
In other words, 
\begin{align} 
\mathsf{N}_d(\mathsf{A}_1) & \,\,=\,\, \mathsf{N}_d(\mathsf{A}_1, d-1). \label{na1_d_minus_1}
\end{align}
Hence, the number of nodal degree $d$ curves passing through $\delta_d-1$ generic points coincides with
the number of nodal degree $d$ curves passing through $\delta_d-d$ generic points and $(d-1)$ 
collinear points. Intuitively, this is saying that if we move some of the point constraints to a line, 
the resulting count of curves is still the same.  

Unwinding the formula \cite[p.~348, Theorem 1.1]{CH} further, it follows that 
\begin{align}
\mathsf{N}_d(\mathsf{A}_1, d-1) & \,\neq\,  \mathsf{N}_d(\mathsf{A}_1, d). \label{CH_neq_deg}
\end{align} 
Let us try to understand this via the example of cubics. Equation \eqref{na1_d_minus_1} says that the number 
of nodal cubics through $8$ generic points is the same as the number of nodal cubics through $6$ generic points 
and $2$ points on a line. Equation \eqref{CH_neq_deg} on the other hand says that 
this number is \textit{not} the same as the number of nodal cubics through $5$ generic points and $3$ points on a line. 
The underlying geometric reason for \cref{CH_neq_deg} 
is as follows: when the $d^{\textnormal{th}}$ point is moved to the line, we get a degenerate contribution from a curve of 
degree $d-1$. 
How then are the two numbers $\mathsf{N}_d(\mathsf{A}_1, d-1)$ and 
$\mathsf{N}_d(\mathsf{A}_1, d)$ related? 

Denote the number of smooth degree $d-1$ curves passing through 
$\delta_{d-1}$ generic points by $\mathsf{N}_{d-1}(\mathsf{S})$. This number is obviously $1$, but let us answer the earlier question in terms of 
$\mathsf{N}_{d-1}(\mathsf{S})$ (which will make the equation geometrically more transparent). 
Unwinding \cite[p.~348, Theorem 1.1]{CH}, it follows that
\begin{align}
\mathsf{N}_d(\mathsf{A}_1, d-1) & \,=\, \mathsf{N}_d(\mathsf{A}_1, d) + (d-1)\mathsf{N}_{d-1}(\mathsf{S}). \label{na1_sm}  
\end{align} 
Using \cref{na1_d_minus_1,na1_sm}, the question of computing $\mathsf{N}_d(\mathsf{A}_1)$ 
has got reduced to a question of computing $\mathsf{N}_{d-1}(\mathsf{S})$ and $\mathsf{N}_d(\mathsf{A}_1, d)$. 
Since we know that $\mathsf{N}_{d-1}(\mathsf{S})\,=\, 1$, the question has now been reduced to the computation of 
$\mathsf{N}_d(\mathsf{A}_1, d)$. 

Recall that $\mathsf{N}_{d-1}(\mathsf{T}_1)$ is defined to be the number of smooth degree $d-1$ curves
passing through $\delta_{d-1}-1$ generic points and tangent to a line.
Unwinding \cite[Theorem 1.1]{CH} further, we get that
\begin{align}
\mathsf{N}_d(\mathsf{A}_1, d) & \,=\, \mathsf{N}_{d-1}(\mathsf{A}_1) + d \mathsf{N}_{d-1}(\mathsf{S}) + 2 \mathsf{N}_{d-1}(\mathsf{T}_1). 
\label{na1_d_pts}
\end{align}
\Cref{na1_d_minus_1,na1_sm,na1_d_pts} combined with the fact that 
$\mathsf{N}_{d-1}(\mathsf{S}) \,=\, 1$ give \cref{na1_CH}.   

Let $\mathsf{N}_d(\mathsf{A}_1^{\mathsf{L}}, k)$ denote the number of nodal degree $d$ 
curves, with the nodal point lying on a line and passing through $\delta_d-2-k$ generic points and 
$k$ points on a line. Note that the line on which the nodal point lies and the line on which the 
$k$ points lie are different. Again, when $k\,=\,0$, we omit writing the $k$, i.e., 
$\mathsf{N}_d(\mathsf{A}_1^{\mathsf{L}})\,:=\, \mathsf{N}_d(\mathsf{A}_1^{\mathsf{L}},0)$. 
Analogous to how \cref{CH_degen_arg} is proved, it will similarly be shown that 
\begin{align}
\mathsf{N}_d(\mathsf{A}_1^{\mathsf{L}}) & \,=\, \mathsf{N}_d(\mathsf{A}_1^{\mathsf{L}}, 1), \nonumber \\ 
\mathsf{N}_d(\mathsf{A}_1^{\mathsf{L}}, 1) & \,=\, \mathsf{N}_d(\mathsf{A}_1^{\mathsf{L}}, 2), \nonumber \\ 
\ldots & \nonumber \\ 
\mathsf{N}_d(\mathsf{A}_1^{\mathsf{L}}, d-2) & \,=\, \mathsf{N}_d(\mathsf{A}_1^{\mathsf{L}}, d-1). \label{CH_degen_arg_L}
\end{align}   
In other words, 
\begin{align} 
\mathsf{N}_d(\mathsf{A}_1^{\mathsf{L}}) & \,\,=\,\, \mathsf{N}_d(\mathsf{A}_1^{\mathsf{L}}, d-1). \label{na1_d_minus_1_L}
\end{align}
Next, similar to how \cref{na1_sm} is proved, it will be shown that 
\begin{align}
\mathsf{N}_d(\mathsf{A}_1^{\mathsf{L}}, d-1) & \,\,= \,\,\mathsf{N}_d(\mathsf{A}_1^{\mathsf{L}}, d) + 
\mathsf{N}_{d-1}(\mathsf{S}). \label{na1_sm_L}  
\end{align} 
Finally, similar to how \cref{na1_d_pts} is proved, it will be shown that  
\begin{align}
\mathsf{N}_d(\mathsf{A}_1^{\mathsf{L}}, d) & \,\,=\,\, 
\mathsf{N}_{d-1}(\mathsf{A}_1^{\mathsf{L}}) + 2 \mathsf{N}_{d-1}(\mathsf{T}_1^{\textsf{Pt}}). 
\label{na1_d_pts_L}
\end{align}
\Cref{na1_d_minus_1_L,na1_sm_L,na1_d_pts_L} 
combined together with the fact that $\mathsf{N}_{d-1}(\mathsf{S})\,=\,1$, produce \cref{mr_11}. 

Let us now explain this entire degeneration process in greater detail.
Denote by $\mathcal{D}_d$ the projective space of dimension $\delta_d$
given by the curves of degree $d$ on
the projective plane. Denote the hyperplane class in $\mathcal{D}_d$ by $y_d$. 
Let $X^1$ be a copy of $\mathbb{P}^2$; its hyperplane class will be denoted by $b_1$.   
Define 
\begin{align} 
\mathsf{A}_1 & \,\,\subset\,\, \mathcal{D}_d \times X^1 \nonumber
\end{align}
to be 
the locus of all $(H_d,\, p)$ such 
that the curve $H_d$ has a node at $p$. 
Let $\big[\mathsf{A}_1\big]$ denote the homology class represented by the closure of 
$\mathsf{A}_1$ in $\mathcal{D}_d \times X^1$. Note that 
$\big[\mathsf{A}_1\big]$ is a polynomial in $y_d$ and $b_1$; knowing this polynomial will 
enable us to compute all the characteristic number of nodal degree $d$-curves. 

For example, note that  
\[ \big[\mathsf{A}_1\big]\cdot y_d^{\delta_d-1} \]
is precisely equal to $\mathsf{N}_d(\mathsf{A}_1)$, 
the number of nodal degree $d$ curves in $\mathbb{P}^2$ that pass through $\delta_d-1$ generic points. 
However, knowing $\big[\mathsf{A}_1\big]$ also enables us to compute some other numbers.  
For example, 
\[ \big[\mathsf{A}_1\big]\cdot y_d^{\delta_d-2} \cdot b_1 \]
is equal to $\mathsf{N}_d(\mathsf{A}_1^{\mathsf{L}})$, the 
number of nodal degree $d$ curves in $\mathbb{P}^2$ that pass through $\delta_d-2$ generic points 
with the nodal point lying on a line.  

Instead of focussing on computing the \textit{number} 
$\mathsf{N}_d(\mathsf{A}_1)$, we will focus on computing the homology class 
represented by the closure of the cycle
$\mathsf{A}_1$. It may be mentioned that this class is actually not computed by Caporaso-Harris. Before proceeding further, let us define a few spaces that will be useful later.

Let $d$ be a positive integer and $n,m$ be non-negative integers. Define
\[
\mathsf{M}_{n}^m(d)\,\,:=\,\, \mathcal{D}_1 \times \mathcal{D}_d \times (X^1 \times \cdots \times X^m)\times (X_1\times \cdots\times X_n),
\]
where each $X^j$ or $X_i$ is $\mathbb{P}^2$. 
The hyperplane classes of $\mathcal{D}_1$, $\mathcal{D}_d$, $X^j$ and $X_i$ will be denoted by  
$y_1, y_d, b_j$ and $a_i$ respectively. For notational ease, $\mathsf{M}_{n}^m(d)$ will be abbreviated
by $\mathsf{M}_{n}^m$. 
Given a subspace $\mathsf{Z}$ of $\mathsf{M}_{n}^m$, the homology class represented by the closure of $\mathsf{Z}$
in $\mathsf{M}_{n}^m$ will be denoted by $\big[\mathsf{Z}\big]$.

Define $\mathsf{A}_1^{\mathsf{F}}\,\, \subset\, \,\mathsf{M}_{0}^1$ 
to be the locus of all $(H_1,\, H_d,\, p)$ such 
that the curve $H_d$ has a node at $p$ and  $p$ does not lie on $H_1$. Let 
\begin{align}\label{definition_A_1T_0^n}
\mathsf{A}_1^{\mathsf{F}}
\underbrace{\mathsf{T}_{0}\cdots \mathsf{T}_{0}}_{n-\textnormal{times}}\,\, \subset\, \,\mathsf{M}_{n}^1
\end{align}
be the locus of all $(H_1,\, H_d,\, p,\, x_1,\, \ldots ,\, x_n)$ such that 
\begin{itemize}
\item the curve $H_d$ has a node at $p$,

\item the curve $H_d$ intersects $H_1$ at $x_1,\, \cdots  ,\,x_n$  transversally,

\item the points $x_1, \, \cdots ,\, x_n$ are all distinct, and

\item the nodal point $p$ does not lie on $H_1$.
\end{itemize}
The space can be pictorially described as follows: 
\begin{center}
            \begin{figure}[h]
                \centering
                \includegraphics[scale=1]{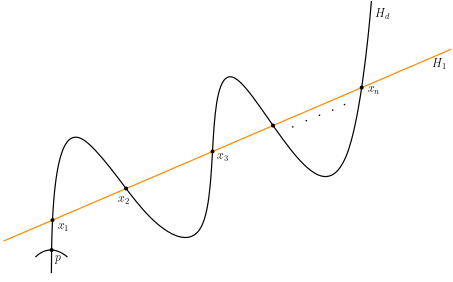}
                \caption{$\mathsf{A}_1^{\mathsf{F}}
\mathsf{T}_{0}\cdots \mathsf{T}_{0}$}
                \label{A^1FT_0}
            \end{figure}
        \end{center}

Similarly, define 
\begin{align}\label{definition_A_1T_0^nF}
\mathsf{A}_1^{\mathsf{F}} \underbrace{\mathsf{T}_{0}\cdots \mathsf{T}_{0}}_{n-\textnormal{times}} \mathsf{F}
\,\, \subset\, \,\mathsf{M}_{n+1}^1
\end{align}

to be 
the locus of all $(H_1,\, H_d,\, p,\, x_1, \ldots\, ,x_n,\, x_{n+1})$ such 
that 
\begin{itemize}
\item the curve $H_d$ has a node at $p$,

\item the curve $H_d$ intersects $H_1$ at $x_1,\, \ldots,\, x_n$  transversally,

\item the points $x_1,\, x_2, \, \ldots,\, x_n,\, x_{n+1}$ are all distinct, and

\item the nodal point $p$ does not lie on $H_1$.
\end{itemize}
The space can be pictorially described as follows: 

\begin{center}
            \begin{figure}[h]
                \centering
                \includegraphics[scale=1]{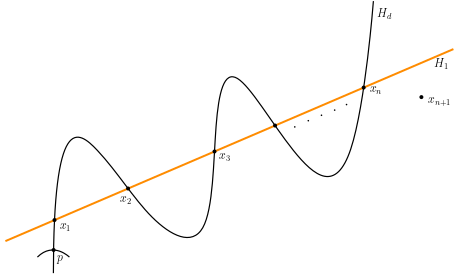}
                \caption{$\mathsf{A}_1^{\mathsf{F}}
\mathsf{T}_{0}\cdots \mathsf{T}_{0} \mathsf{F}$}
                \label{A^1FT_0F}
            \end{figure}
        \end{center}

Next, define 
\[ \big( x_i \in \mathsf{L} \big) \,\,\subset\,\, \mathsf{M}^1_n \] 
to be the locus of all $(H_1,\, H_d,\, p,\, x_1,\, \cdots,\, x_n)$ such 
that $x_i$ lies on the line $H_1$. Similarly, define 
\[ \big( x_i \in \mathcal{C}_d \big) \,\,\subset\,\, \mathsf{M}^1_n \] 
to be the locus of all $(H_1,\, H_d,\, p,\, x_1,\, \cdots,\, x_n)$ such 
that $x_i$ lies on the curve $H_d$.

Finally, for $i\,\neq\, j$, define 
\[ \big(x_i = x_j \big)\,\, \subset\, \,\mathsf{M}^1_n  \]
to be the locus 
of all $(H_1,\, H_d,\, p,\, x_1,\, \cdots,\, x_n)$ such that $x_i$ is equal to $x_j$. 
Similarly, define 
\[ \big(p = x_j \big)\,\,\, \subset\, \,\,\mathsf{M}^1_n  \]
to be the locus 
of all $(H_1,\, H_d,\, p,\, x_1,\, \cdots,\, x_n)$ such that $p$ is equal to $x_j$. 

Let us now recall a few standard facts; the following equalities of homology classes in 
$\mathsf{M}^1_n$ hold: 
\begin{align}
\big[\big(x_i = x_j \big)\big] & \,=\, a_i^2 + a_i a_j + a_j^2, \qquad 
\big[\big(p = x_j \big)\big] \,=\, b_1^2 + b_1 a_j + a_j^2, \nonumber \\ 
\big[\big( x_i \in \mathsf{L} \big)\big] & \,=\, y_1 + a_i, \qquad \textnormal{and} \qquad
\big[\big( x_i \in \mathcal{C}_d \big)\big]\, =\, y_d + d a_i. \label{diag_inc_std}
\end{align}

\subsection{Cycle version}

We are now ready to state the main results on the level of cycles. We will first state and prove all the statements on cycle level. Enumerative information are extracted  from them in \Cref{enumerative applications}.

\begin{thm} 
\label{step1_CH_p}
The following equality of homology classes hold in  
$\mathsf{M}^1_1(d)$: 
\begin{align*}
\big[\mathsf{A}_1^{\mathsf{F}} \mathsf{F}\big]\cdot
\big[\big(x_1 \in \mathsf{L}\big)\big] \cdot \big[ \big(x_1 \in \mathcal{C}_d\big) \big] 
&\,\,=\,\, \big[\mathsf{A}_1^{\mathsf{F}}\mathsf{T}_0\big], 
\end{align*}
where $d \,\in\, \mathbb{Z}^{\geq 2}$. 
\end{thm}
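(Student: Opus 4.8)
The plan is to prove the identity geometrically, by exhibiting the right-hand side as a proper, generically transverse intersection inside the smooth projective variety $\mathsf{M}^1_1(d)$ (a product of projective spaces), and then reading off the equality of classes. First I would identify the set underlying $\overline{\mathsf{A}_1^{\mathsf{F}}\mathsf{F}}\cap\big(x_1\in\mathsf{L}\big)\cap\big(x_1\in\mathcal{C}_d\big)$. Since $\mathsf{A}_1^{\mathsf{F}}\mathsf{F}$ is just $\mathsf{A}_1^{\mathsf{F}}\times X_1$ inside $\mathsf{M}^1_1=\mathsf{M}^1_0\times X_1$ (the point $x_1$ carries no constraint), a point of the intersection is a tuple $(H_1,H_d,p,x_1)$ with $(H_1,H_d,p)\in\overline{\mathsf{A}_1^{\mathsf{F}}}$ and $x_1\in H_1\cap H_d$. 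On the dense open set where $H_d$ has an honest node at $p\notin H_1$ and $x_1$ is a smooth point of both $H_1$ and $H_d$ distinct from $p$, the intersection at $x_1$ is automatically transverse, so we recover precisely the defining locus of $\mathsf{A}_1^{\mathsf{F}}\mathsf{T}_0$; everything else lies over the boundary $\overline{\mathsf{A}_1^{\mathsf{F}}}\setminus\mathsf{A}_1^{\mathsf{F}}$, or in one of the loci $\big(p\in H_1\big)$, $\big(x_1=p\big)$, $\{H_1\text{ tangent to }H_d\text{ at }x_1\}$, or $\{H_1\subset H_d\}$.

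Next I would verify that the intersection is dimensionally proper — every component has codimension exactly $3+1+1=5$ in $\mathsf{M}^1_1$ — and that the only component of that codimension is $\overline{\mathsf{A}_1^{\mathsf{F}}\mathsf{T}_0}$. Since a marked node imposes three linear conditions on $\mathcal{D}_d$, the space $\mathsf{A}_1^{\mathsf{F}}$ has dimension $\delta_d+1$ in $\mathsf{M}^1_0$, and fibering the finite condition ``$x_1\in H_1\cap H_d$'' over its generic member gives $\dim\mathsf{A}_1^{\mathsf{F}}\mathsf{T}_0=\delta_d+1$, i.e. codimension $5$. For the remaining pieces one does the elementary bookkeeping: the locus with $p\in H_1$ has dimension $\le\delta_d$ (a dimension of lines through $p$ is lost); the locus with $x_1=p$ forces $p\in H_1$ and has dimension $\le\delta_d$; the locus where $H_1$ is tangent to $H_d$ at $x_1$ lives over the (one-dimensional, for fixed $H_d$) dual curve and has dimension $\le\delta_d$; the locus with $H_1\subset H_d$ meeting $\mathsf{A}_1^{\mathsf{F}}$ is cut down severely because the node must then be a node of the residual curve lying off $H_1$; and the contribution of the boundary of $\overline{\mathsf{A}_1^{\mathsf{F}}}$ is cut by the two incidence divisors below dimension $\delta_d+1$. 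Hence $\overline{\mathsf{A}_1^{\mathsf{F}}\mathsf{T}_0}$ is the unique top-dimensional component.

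The substantive step is generic transversality along $\mathsf{A}_1^{\mathsf{F}}\mathsf{T}_0$, which gives intersection multiplicity one. At a generic such point $(H_1,H_d,p,x_1)$ — with $H_d$ a generic one-nodal curve, $H_1$ a generic line, and $x_1$ a transverse intersection point of $H_1$ with $H_d$ away from $p$ — all three loci are smooth: $\mathsf{A}_1^{\mathsf{F}}$ because the differentials of $F(p),\,F_x(p),\,F_y(p)$ are independent (the Hessian of $F$ at a node is nondegenerate), and the incidence divisors $\big(x_1\in\mathsf{L}\big),\big(x_1\in\mathcal{C}_d\big)$ because they are restrictions of the smooth universal incidence hypersurfaces and $x_1$ is a smooth point of both $H_1$ and $H_d$. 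For mutual transversality it suffices to exhibit, for the three normal spaces, disjoint blocks of ambient directions surjecting onto them: moving $H_1$ alone surjects onto $N_{(x_1\in\mathsf{L})}$ and is invisible to the other two; moving $x_1$ alone surjects onto $N_{(x_1\in\mathcal{C}_d)}$ (since $\nabla F(x_1)\neq 0$) and is invisible to $\mathsf{A}_1^{\mathsf{F}}$; moving $(H_d,p)$ surjects onto the three-dimensional $N_{\mathsf{A}_1^{\mathsf{F}}}$, and any drift this induces in the $(x_1\in\mathcal{C}_d)$ direction is reabsorbed using the remaining slack in $x_1$ (and the ensuing drift in $N_{(x_1\in\mathsf{L})}$ by that in $H_1$). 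Thus $T\mathsf{M}^1_1$ surjects onto the direct sum of the three normal spaces.

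Putting these together: the triple intersection is proper, with a single top-dimensional component $\overline{\mathsf{A}_1^{\mathsf{F}}\mathsf{T}_0}$ occurring with multiplicity one, and since $\mathsf{M}^1_1(d)$ is smooth and projective this yields
\[
\big[\mathsf{A}_1^{\mathsf{F}}\mathsf{F}\big]\cdot\big[\big(x_1\in\mathsf{L}\big)\big]\cdot\big[\big(x_1\in\mathcal{C}_d\big)\big]\,=\,\big[\mathsf{A}_1^{\mathsf{F}}\mathsf{T}_0\big].
\]
I expect the transversality verification of the previous paragraph to be the main obstacle, since that is where the genuine geometric content lies; the dimension estimates ruling out spurious components are routine but must be carried out carefully, precisely because the theorem is asserting that no excess component survives — in particular none over $\big(p\in H_1\big)$, where the condition $x_1\in\mathcal{C}_d$ degenerates.
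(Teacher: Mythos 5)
Your proposal is correct and follows essentially the same route as the paper: the paper treats \Cref{step1_CH_p} as the $n=0$ case of \Cref{theorem_for_many_Tks_A1F_pp}, whose proof likewise proceeds by decomposing the set-theoretic intersection, discarding the loci where the node meets the line (and where the line is a component) by the dimension count of \Cref{remark1}, and establishing transversality along $\mathsf{A}_1^{\mathsf{F}}\mathsf{T}_0$ to get multiplicity one. The only cosmetic difference is that the paper verifies transversality by exhibiting explicit polynomial deformations $\eta$ in affine coordinates rather than by your abstract normal-space surjectivity argument.
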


\Cref{step1_CH_p} is a special case of 
the following theorem when $n\,=\,0$. We will prove this directly.  

\begin{thm}
\label{theorem_for_many_Tks_A1F_pp}
Let $d \,\in\, \mathbb{Z}^{\geq 2}$, and let $n$ be an integer such that 
\[0 \ \leq\  n \ \leq\ d-2.\]
Then, on $\mathsf{M}^1_{n+1}(d)$ 
the following equality of homology classes hold: 
\begin{align}\label{many_Tks_A1F_pp}
\big[\mathsf{A}_1^{\mathsf{F}} \underbrace{\mathsf{T}_{0} \cdots \mathsf{T}_{0}}_{n-\textnormal{times}}\mathsf{F}\big]\cdot 
\big[\big(x_{n+1}\in \mathsf{L}\big)\big]\cdot \big[\big(x_{n+1}\in \mathcal{C}_d\big)\big] \,\,&= \,\,
\big[\mathsf{A}_1^{\mathsf{F}} \underbrace{\mathsf{T}_{0}\cdots \mathsf{T}_{0}}_{(n+1)-\textnormal{times}}\big]\nonumber \\ 
& + \sum_{i=1}^{n} \big[\mathsf{A}_1^{\mathsf{F}}
\underbrace{\mathsf{T}_{0}\cdots \mathsf{T}_{0}}_{n-\textnormal{times}} \mathsf{F}\big] \cdot \big[\big(x_i = x_{n+1}\big)\big].
\end{align}
\end{thm}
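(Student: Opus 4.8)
The plan is to work out the intersection
$\big[\mathsf{A}_1^{\mathsf{F}}\mathsf{T}_0\cdots\mathsf{T}_0\mathsf{F}\big]\cdot\big[(x_{n+1}\in\mathsf{L})\big]\cdot\big[(x_{n+1}\in\mathcal{C}_d)\big]$ geometrically, by identifying the support of the intersection cycle and checking that the intersection is transverse (so that all multiplicities are one) along each component. A generic point of $\mathsf{A}_1^{\mathsf{F}}\mathsf{T}_0\cdots\mathsf{T}_0\mathsf{F}$ is a tuple $(H_1,H_d,p,x_1,\dots,x_n,x_{n+1})$ with $H_d$ nodal at $p\notin H_1$, meeting $H_1$ transversally at the distinct points $x_1,\dots,x_n$, and $x_{n+1}$ an extra marked point (free, but distinct from $x_1,\dots,x_n$). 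Imposing $x_{n+1}\in\mathsf{L}$ and $x_{n+1}\in\mathcal{C}_d$ forces $x_{n+1}$ to lie on $H_1\cap H_d$. Since $H_d$ meets $H_1$ in $d$ points (with multiplicity), and $x_1,\dots,x_n$ already account for $n$ of them, there are generically $d-n$ ways to place $x_{n+1}$; but $x_{n+1}$ is only required to be distinct from the OTHER marked points in the definition of $\mathsf{A}_1^{\mathsf{F}}\cdots\mathsf{F}$, so on the closure it is allowed to collide with one of $x_1,\dots,x_n$ as well. This is exactly what produces the two types of limit strata on the right: either $x_{n+1}$ becomes a genuine $(n+1)$-st transverse intersection point distinct from the rest — giving $\mathsf{A}_1^{\mathsf{F}}\mathsf{T}_0\cdots\mathsf{T}_0$ with $n+1$ copies of $\mathsf{T}_0$ — or $x_{n+1}$ limits onto one of the existing $x_i$, giving the correction terms $\big[\mathsf{A}_1^{\mathsf{F}}\mathsf{T}_0\cdots\mathsf{T}_0\mathsf{F}\big]\cdot\big[(x_i=x_{n+1})\big]$.

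The steps I would carry out, in order, are as follows. First, describe set-theoretically the intersection of the three cycles: show that $\overline{\mathsf{A}_1^{\mathsf{F}}\mathsf{T}_0\cdots\mathsf{T}_0\mathsf{F}}\cap(x_{n+1}\in\mathsf{L})\cap(x_{n+1}\in\mathcal{C}_d)$ is set-theoretically the union of $\overline{\mathsf{A}_1^{\mathsf{F}}\mathsf{T}_0\cdots\mathsf{T}_0}$ (with $n+1$ copies of $\mathsf{T}_0$) and the $n$ loci $\overline{\mathsf{A}_1^{\mathsf{F}}\mathsf{T}_0\cdots\mathsf{T}_0\mathsf{F}}\cap(x_i=x_{n+1})$; here the constraint $n\le d-2$ guarantees enough room on $H_1$ for an $(n+1)$-st transverse intersection point, and also ensures none of the strata are forced into higher-codimension degenerate configurations (e.g. $H_1$ being a component of $H_d$, or extra tangencies). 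The key point is that no $\mathsf{T}_1$-type (tangential) stratum appears: a one-nodal $H_d$ meeting $H_1$ transversally at $n$ points cannot acquire a tangency in codimension one without using up the single node or forcing $x_{n+1}$ to one of the prescribed types already listed. Second, carry out a local/tangent-space computation at a generic point of each component to verify that the scheme-theoretic intersection is reduced there, i.e. the three divisors meet transversally; concretely, one checks that the differentials of the defining equations ($x_{n+1}\in\mathsf{L}$, $x_{n+1}\in\mathcal{C}_d$, and the incidence/equality condition cutting out the relevant boundary stratum inside $\overline{\mathsf{A}_1^{\mathsf{F}}\cdots\mathsf{F}}$) are linearly independent, using the genericity of $H_1$ relative to $H_d$ and the standard classes in \eqref{diag_inc_std}. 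Third, assemble: having matched supports and verified multiplicity one everywhere, conclude the stated equality of homology classes, and note that $n=0$ recovers \Cref{step1_CH_p}.

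I expect the main obstacle to be the transversality/multiplicity-one verification along the correction strata $(x_i=x_{n+1})$, and more subtly, ruling out ``hidden'' boundary components — in particular confirming that as $x_{n+1}$ moves onto an intersection point of $H_d$ with $H_1$, the limit genuinely lands in $\mathsf{A}_1^{\mathsf{F}}\mathsf{T}_0\cdots\mathsf{T}_0$ with the correct transversality of all $n+1$ intersection points (and does not, for instance, force a tangency at the limiting point or drag the node $p$ onto $H_1$). This requires a careful deformation argument: given a point of $\overline{\mathsf{A}_1^{\mathsf{F}}\mathsf{T}_0\cdots\mathsf{T}_0}$ with all intersections transverse, one must exhibit a one-parameter family inside $\mathsf{A}_1^{\mathsf{F}}\mathsf{T}_0\cdots\mathsf{T}_0\mathsf{F}\cap(x_{n+1}\in\mathsf{L})\cap(x_{n+1}\in\mathcal{C}_d)$ limiting to it, which is where the bound $n\le d-2$ is used essentially. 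The diagonal-class inputs from \eqref{diag_inc_std} (for $(x_i=x_{n+1})$) and the fact that $(x_{n+1}\in\mathsf{L})$ and $(x_{n+1}\in\mathcal{C}_d)$ are transverse divisors whose intersection restricted to the universal family is a finite flat cover of the relevant base should handle the numerical bookkeeping, so the essential content is geometric rather than computational.
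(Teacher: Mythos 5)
Your proposal follows essentially the same route as the paper: identify the set-theoretic support of the triple intersection as the union of the $(n+1)$-fold $\mathsf{T}_0$ locus and the diagonal strata $(x_i=x_{n+1})$, exclude the configurations where the node lands on $H_1$ (or $H_1$ becomes a component of $H_d$) by the dimension count that uses $n\le d-2$, and then verify transversality and multiplicity one by a local computation — which is exactly what the paper does via explicit affine deformations $\eta$ and the order-of-vanishing calculation of the evaluation map at a diagonal point. The plan is sound and matches the paper's argument in both structure and the role played by the hypothesis $n\le d-2$.
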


\begin{rem}\label{remark1}
Let us first examine the validity of \cref{many_Tks_A1F_pp} on set-theoretic level. 
Consider the first term 
on the left-hand side, namely 
\[\big[\mathsf{A}_1^{\mathsf{F}} \underbrace{\mathsf{T}_{0} \cdots \mathsf{T}_{0}}_{n-\textnormal{times}}\mathsf{F}\big].\] 
It is represented by the closure of the following space: a line, a nodal curve with the node not lying on the line 
and $n+1$ distinct points $(x_1,\,
\cdots,\, x_{n+1})$, such that the curve intersects the line transversally 
at the points $(x_1,\, \cdots,\, x_n)$. 
The last point $x_{n+1}$ is free (it does not have to lie on either the line or the curve). 

Now consider the second and third factor on the left-hand side of 
\cref{many_Tks_A1F_pp}, namely 
\[ \big[\big(x_{n+1} \in \mathsf{L}\big) \big] \cdot \big[ \big(x_{n+1} \in \mathcal{C}_d\big) \big]. \]
It is represented by the following space: a
line, a curve and $n+1$ points $(x_1,\, \cdots,\, x_{n+1})$, 
such that the points $(x_1,\, \cdots,\, x_n)$ are free, while the last point $x_{n+1}$ lies on the line
and the curve.

The set-theoretic intersection of the above two spaces has two possibilities. 
The first case to consider is when the nodal point does not lie on the line. 
Corresponding to this, there are two sub-cases. 
The first sub-case is that the point $x_{n+1}$ is distinct from all the other points $(x_1,\, \cdots,\, x_n)$. 
The closure of that space represents the first term on the right-hand side of
\cref{many_Tks_A1F_pp}.

The second sub-case is that the point $x_{n+1}$ could be equal to one of the $x_i$ 
(for $i\,\in\, \{1,\, \ldots,\, n\}$). 
The closure of that space gives us the second term on the right-hand side of \cref{many_Tks_A1F_pp}. 

The second possibility to consider is that the nodal point lies on the line. 
It will be shown that this component has 
a high codimension and hence will not intersect a cycle of complementary dimension.

First note that $\mathsf{M}^1_{n+1}(d)$ is a manifold of dimension 
$\delta_d+2n+6$.
\Cref{many_Tks_A1F_pp} is an 
equality of classes of codimension 
$2n+5$. This is because $\mathsf{A}_1^{\mathsf{F}}$ is a codimension three 
class. Furthermore, imposing the condition that the point lies on both the line and the 
curve is a codimension $2$ condition. Hence, the total codimension is 
\[ 3 + 2(n+1) = 2n+5. \] 
Therefore, the dimension of the homology cycles representing each side of \cref{many_Tks_A1F_pp} 
is given by
\begin{align*}
a&\, :=\, \delta_d+2n+6 - (2n+5) \\ 
 & \,=\, \frac{d^2}{2}+\frac{3d}{2}+1.   
\end{align*}
Now the dimension of the component where the nodal point lies on the 
line will be computed. More precisely, we are looking at the space of degree $d$ curves, with a nodal 
point on the designated line and $n+1$ points lying on the line. 
Assume further that the curve is either irreducible or even if it is reducible, the 
designated line is not one of the components of the curve (i.e., there exists points on the line that 
do not lie on the curve). The nodal point lying on 
the line is a codimension $4$ condition. Each of the $(n+1)$ marked points lying on the line and curve 
is a codimension $2$ condition. Hence, the total codimension of this space is $2n+6$.   
This is strictly greater than $2n+5$ and hence it does not contribute to the intersection. 

Now look at the space of degree $d$ curves, with a nodal 
point on the designated line and $n+1$ points lying on the line. 
Furthermore, assume that the curve is reducible, with the line being one of the components of the curve. 
The dimension of this space is given by  
\begin{align*}
b&\ :=\ 2 + \delta_{d-1} + (n+1). \\ 
 & \ =\ \frac{d^2}{2} + \frac{d}{2} + n+2.
\end{align*}   
To see why this is so, note that the space of lines is two-dimensional and the space of degree $d-1$ curves 
is of dimension $\delta_{d-1}$. We are now allowed to place $n+1$ points on the line; each point has one
degree of freedom. Hence, the total dimension of the space is $2 + \delta_{d-1} + (n+1)$.    

Note that 
\begin{align}
b-a & \ =\ n-(d-1). \label{step_d_dim_red}
\end{align} 
Hence, if $n\,<\,d-1$, then $b\,<\,a$; this implies that the 
space of reducible curves with the line being one of the components does not contribute 
to the intersection if $n\,<\,d-1$. Notice the importance of the inequality $n\,<\,d-1$ for this argument 
(as we will see in the next theorem, this component does contribute to the intersection).   
\end{rem}

\begin{proof}[\textbf{Proof of  \Cref{theorem_for_many_Tks_A1F_pp}}]
Following the discussion in \Cref{remark1}, it suffices to show that the relevant intersections 
are transverse and that the multiplicities of the intersections for the second term on the 
right-hand side of \cref{many_Tks_A1F_pp} are all one. 

We claim that the open part of each of the cycles is actually a smooth 
manifold of the expected dimension. This can be proved by showing that $\mathsf{A}_1^{\mathsf{F}}$ 
is a smooth submanifold of $\mathsf{M}^1_0$ of codimension three (i.e., it has the expected dimension).
This is a local statement, we switch to affine spaces. Let $\mathcal{F}_d$ be the space of 
polynomials in two variables of degree at most $d$. 
This is a vector space of dimension $\frac{d(d+3)}{2} + 1$, because
an element of $\mathcal{F}_d$ can be viewed as 
\[
 f(x,\,y)\,:=\, f_{00} + f_{10}x + f_{01} y + \frac{f_{20}}{2} x^2 + f_{11} xy + \frac{f_{02}}{2} y^2 +
\cdots + \frac{f_{0d}}{d!} y^d,
\]
and hence $f$ can be identified with $(f_{00},\, f_{10},\, f_{01},\, \ldots,\, f_{0d})$.
Let $\mathcal{F}_d^{*}$ denote the space of all non-zero polynomials of degree $d$.

Define $(\mathsf{A}_1^{\mathsf{F}})_{\textsf{Aff}}$ to be the following subspace of 
$\mathcal{F}_d^* \times \mathbb{C}^2$: it consists of a curve $f$ and a pair of complex numbers 
$(x,\, y) \,\in\, \mathbb{C}^2$ such that the curve 
has a node at $(x,\, y)$. 
It will be shown that $(\mathsf{A}_1^{\mathsf{F}})_{\textsf{Aff}}$ 
is a smooth
submanifold of $\mathcal{F}_d^* \times \mathbb{C}^2$.

Consider the map 
\[
\varphi\,\,:\,\,\mathcal{F}_d^* \times \mathbb{C}^2\,\, \longrightarrow\,\, \mathbb{C}^3, \ \ \, 
(f,\, (x,\, y))\,\longmapsto\,(f(x,\,y),\, f_x(x,\,y),\, f_y(x,\,y)).
\]
It suffices to show that $0$ is a regular value of $\varphi$. Consider the polynomials $\eta_{ij}(x,\,y)$ given by 
\begin{align*}
\eta_{00}(x,\,y)&\,:=\, 1, \qquad \eta_{10}(x,\,y)\,:=\, x, \qquad \textnormal{and} \qquad \eta_{01}(x,\,y)\,:=\, y. 
\end{align*}
Let $\gamma_{ij}(t)$ be the curve given by 
\[ \gamma_{ij}(t)\,\,:=\,\, (f+t \eta_{ij},\, x). \]
Computing the differential of $d\varphi$ on these curves proves transversality. 

Next, we will show that 
$\mathsf{A}_1^{\mathsf{F}} \mathsf{T}_0$ 
is a smooth submanifold of $\mathsf{M}^1_1$ of codimension five. 
Let us now fix the $\mathtt{x}-$axis as the designated line. 
Define $(\mathsf{A}_1^{\mathsf{F}}\mathsf{T}_0)_{\textsf{Aff}}$ to be the following subspace of 
$\mathcal{F}_d^* \times \mathbb{C}^2 \times \mathbb{C}$: 
it consists of a curve $f$ and a triple of complex numbers $(x,\,y,\, x_1) \,\in
\,\mathbb{C}^3$ such that $f$ has a node at $(x,\,y)$ and 
it intersects the $\mathtt{x}-$axis at $(x_1,\,0)$ transversally.
Furthermore, we have $y\, \neq\, 0$ (this corresponds to the condition that the nodal point does not lie on the line). 
It will be shown that $(\mathsf{A}_1^{\mathsf{F}}\mathsf{T}_0)_{\textsf{Aff}}$
is a smooth submanifold of $\mathcal{F}_d^* \times \mathbb{C}^3$. For that, define the map 
\[
\varphi\,:\,\mathsf{A}_1^{\mathsf{F}} \times \mathbb{C} \, \longrightarrow\, \mathbb{C}, 
\ \ \ \, (f,\, (x,y),\, x_1)\, \longmapsto\, f(x_1,\, 0).
\]
In order to show that 
$(\mathsf{A}_1^{\mathsf{F}}\mathsf{T}_0)_{\textsf{Aff}}$
is a smooth submanifold of 
$\mathsf{A}_1^{\mathsf{F}} \times \mathbb{C}$, 
it suffices to prove that $0$ is a regular value of $\varphi$. 
To show that $0$ is a regular value of $\varphi$, assume that $\varphi(f,\,(x_0,\,y_0),\,x_1)\,=\,0$. 
For computing the differential of $\varphi$ at $(f,\,(x_0,\,y_0),\,x_1)$, consider the following curve 
\[ 
\gamma(t)\,\,:=\,\, (f + t \eta, \, (x_0,\,y_0), \,x_1), 
\]
where $\eta$ is as yet an unspecified polynomial. 
We need to choose $\eta$ in such a way that 
\[\{d\varphi|_{(f,\,(x_0,y_0),\,x_1)}\}(\gamma^{\prime}(0))\] 
is non-zero. Furthermore, $f+t \eta$ should have a node at $(x_0,\,y_0)$ for all $t$. 
Since $f$ has a node at $(x_0,\,y_0)$, this is equivalent to the condition that 
\begin{align}
\eta(x_0, \,y_0)&\, =\, 0, \qquad \eta_x(x_0,\, y_0)\,=\, 0 \qquad \textnormal{and} \qquad
\eta_y(x_0,\, y_0) \,=\, 0. \label{eta_node}
\end{align} 
Now note that the differential of $\varphi$ is given by 
\[
\left\{d\varphi\big\vert_{(f,(x,y), x_1)}\right\}(\gamma^{\prime}(0)) \,\,=\,\,\,
\lim_{t\rightarrow 0} \frac{(f+t\eta)(x_1,\, 0) - f(x_1,\, 0)}{t}\,\, =\,\, \eta(x_1,\, 0). 
\]
Hence, in addition to the conditions of \cref{eta_node}, 
it is required that $\eta(x_1,\,0) \,\neq\, 0$. 
We now show that such an $\eta$ exists. Let 
\[ \eta(x,\,y)\ := \ (y-y_0)^2.  \] 
Since $y_0 \,\neq\, 0$, it follows that all the four conditions are satisfied. 
This proves that
$(\mathsf{A}_1^{\mathsf{F}}\mathsf{T}_0)_{\textsf{Aff}}$ is a smooth submanifold of 
$\mathsf{A}_1^{\mathsf{F}} \times \mathbb{C}$.

Next, we will show that 
$\mathsf{A}_1^{\mathsf{F}} \mathsf{T}_0 \mathsf{T}_0$ 
is a smooth submanifold of $\mathsf{M}^1_2$ of codimension seven. 
As before, we switch to affine space and define 
$(\mathsf{A}_1^{\mathsf{F}}\mathsf{T}_0 \mathsf{T}_0)_{\textsf{Aff}}$ as follows: 
\[
(\mathsf{A}_1^{\mathsf{F}}\mathsf{T}_0 \mathsf{T}_0)_{\textsf{Aff}} \,\,:=\,\, 
\{ (f,\, (x_0,y_0),\, x_1, \,x_2)\,\in\, (\mathsf{A}_1^{\mathsf{F}}\mathsf{T}_0)_{\textsf{Aff}}\times \mathbb{C}\,\,
\big\vert\,\,f(x_2,0)\,=\,0 ,\,\, x_1 \,\neq\, x_2\}.
\]

To show that 
$(\mathsf{A}_1^{\mathsf{F}}\mathsf{T}_0 \mathsf{T}_0)_{\textsf{Aff}}$
is a smooth submanifold of 
$(\mathsf{A}_1^{\mathsf{F}}\mathsf{T}_0)_{\textsf{Aff}}\times \mathbb{C}$, define the map 
\[
\varphi\,:\,(\mathsf{A}_1^{\mathsf{F}}\mathsf{T}_0)_{\textsf{Aff}}\times \mathbb{C} \, \longrightarrow\, \mathbb{C}, 
\ \ \, (f,\,(x_0,\, y_0),\, x_1, \,x_2)\, \longmapsto\, f(x_2,\,0).
\]
It suffices to show that the differential of $\varphi$ is nonzero when $x_1 \,\neq\, x_2$ and when $y_0
\,\neq\, 0$. 
To compute the differential of $\varphi$ at $(f,\,(x_0,\, y_0),\,x_1,\, x_2)$, consider the curve 
\[
\gamma(t)\,\,:=\,\, (f + t \eta, \,(x_0,\, y_0), \,x_1, \,x_2), 
\]
where $\eta$ is as yet an unspecified polynomial. It should be so that 
\[\{d\varphi|_{(f,x_1, x_2)}\}(\gamma^{\prime}(0))\] 
is non-zero. Furthermore, the curve $\gamma(t)$ is required to lie in 
$(\mathsf{A}_1^{\mathsf{F}}\mathsf{T}_0)_{\textsf{Aff}} \times \mathbb{C}$ for all $t$. 
This means that \cref{eta_node} must be satisfied and furthermore  
\begin{align}
\eta(x_1, 0) & \ =\ 0. \label{eta_T0}
\end{align}
The differential is given by 
\[
\{d\varphi|_{(f,(x_0, y_0), x_1, x_2)}\}(\gamma^{\prime}(0)) \,\, =\,\,
\lim_{t\rightarrow 0} \frac{(f+t\eta)(x_2,\, 0) -f(x_2,\, 0)}{t}\,\, =\,\, \eta(x_2,\, 0).
\]
Hence, $\eta$ should satisfy \cref{eta_node,eta_T0}, 
and 
$\eta(x_2,\, 0) \,\neq\, 0$. 
Such an $\eta$ exists; for example consider 
\[
\eta(x,\,y)\,\,:=\,\, (y-y_0)^2(x-x_1).
\]
Notice that the conditions $y_0 \,\neq \,0$ and $x_1 \,\neq\, x_2$ are crucial.
We also need $d \,\geq\, 3$ for the argument to work (since the polynomial $\eta$ is of degree $3$). 

Using a similar argument, it is seen that 
\[\big[\mathsf{A}_1^{\mathsf{F}} \underbrace{\mathsf{T}_{0} \cdots \mathsf{T}_{0}}_{n-\textnormal{times}}\big]\]
is a smooth submanifold of $\mathsf{M}^1_{n}$ of codimension $3+2n$, provided $n \,\leq\, d-2$. 
The bound on $d$ enables one to construct the desired curve $\eta$ that is needed to prove the transversality.  

Now the multiplicities of the intersections will be computed. 
The above transversality shows that the first 
term in the right-hand side of \cref{many_Tks_A1F_pp} 
occurs with multiplicity one.
To justify the intersection multiplicities of the remaining terms, consider the situation where
$x_1$ coincides with $x_{n+1}$. For convenience, set $x_{n+1}$ to be equal to zero 
(i.e., the last point is the origin); in that case, $f$ intersects the $\mathtt{x}$--axis transversally
at the origin. We are now going to 
study the multiplicity with which the evaluation map vanishes at the origin. 
Hence, $f$ is such that $f_{00}$ vanishes. It is given by 
\[
f(x,\,y)\,\, =\,\, f_{10} x + f_{01} y + \frac{f_{20}}{2} x^2 + f_{11} xy + \frac{f_{02}}{2} y^2 + \cdots .
\]
Now consider the evaluation map 
\[
\varphi(f,\, x)\,\,:=\,\, f(x,\,0) \,\, =\,\,f_{10} x + \frac{f_{20}}{2} x^2 + \cdots. 
\]
The order of vanishing of $\varphi$ is clearly $1$, provided 
$f_{10} \,\neq\, 0$. 
But that assumption is valid, since $f$ intersects the $\mathtt{x}$--axis transversally at the origin.  
Hence, the order of vanishing is $1$. This proves the equality in \cref{many_Tks_A1F_pp}. 
\end{proof}

\Cref{theorem_for_many_Tks_A1F_pp} is false, when $n\,=\,d-1$. 
Before stating the correct statement for $n\,=\, d-1$,  
we need to develop some more notation. Define 
\begin{align}\label{definition_R_n}
\mathsf{R}_n(d)\ \subset\ \mathsf{M}^1_{n}(d) 
\end{align} 
to be the locus of all $(H_1,\, H_d,\, p,\, x_1,\, \cdots,\, x_n)$ such that
\begin{itemize}
\item the curve $H_d$ is the union of the line $H_1$ and a degree $d-1$ curve,  
\item the degree $d-1$ curve is smooth, 
\item the degree $d-1$ curve intersects $H_1$ transversally at all the points. 
\item the point $p$ is one of the points of intersection of the $d-1$ curve and $H_1$, and 
\item the points $x_1, \, \cdots,\, x_n$ are all distinct and lie on $H_1$; they are also distinct from $p$. 
\end{itemize}
The space can be pictorially described as follows: 

\begin{center}
            \begin{figure}[h]
                \centering
                \includegraphics[scale=1]{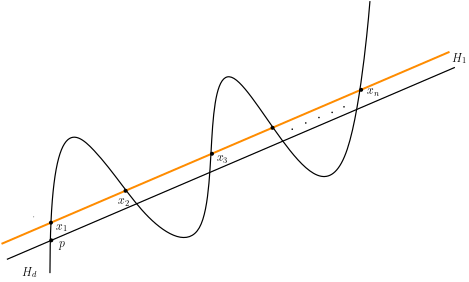}
                \caption{$\mathsf{R}_n(d)$}
                \label{R_nd}
            \end{figure}
        \end{center}

\begin{rem} \label{black_orange}
    Note that the black and orange lines in figure \ref{R_nd} are actually supposed to represent the same line. We distinguish them to keep in mind that the line is a part of a reducible curve, as well as a line in its own moduli space. This remark also applies to the rest of the pictures in this section.
\end{rem}

Insertion of one more point on both the line and the curve would yield the following result.

\begin{thm}
\label{theorem_for_many_Tks_A1F_pp_Break}
Let $d \in \mathbb{Z}^{\geq 3}$. 
Then, the following equality of homology classes 
holds in $\mathsf{M}_{{d}}^1(d)$
\begin{align}\label{many_Tks_A1F_pp_Break}
\big[\mathsf{A}_1^{\mathsf{F}} \underbrace{\mathsf{T}_{0} \cdots \mathsf{T}_{0}}_{(d-1)-\textnormal{times}} \mathsf{F}\big]\cdot 
\big[\big(x_d \in \mathsf{L}\big)\big]\cdot \big[\big(x_d \in \mathcal{C}_d\big)\big] \,\,&= \,\,
\big[\mathsf{A}_1^{\mathsf{F}} \underbrace{\mathsf{T}_{0}\cdots \mathsf{T}_{0}}_{d-\textnormal{times}}\big] \nonumber \\ 
& + \sum_{i=1}^{d-1}\big[\mathsf{A}_1^{\mathsf{F}}
\underbrace{\mathsf{T}_{0}\cdots \mathsf{T}_{0}}_{(d-1)-\textnormal{times}}\mathsf{F}\big] \cdot \big[\big(x_i = x_d\big)\big] 
+ \big[\mathsf{R}_d(d)\big].
\end{align}
\end{thm}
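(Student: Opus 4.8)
The plan is to run the argument of \Cref{theorem_for_many_Tks_A1F_pp} in the borderline case $n=d-1$, where the one new phenomenon is that the locus of reducible curves now has the \emph{expected} dimension and so contributes a genuine extra component.

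First I would redo the set-theoretic analysis of \Cref{remark1} verbatim, now with $n=d-1$. A point of the set-theoretic intersection $\overline{\mathsf{A}_1^{\mathsf{F}}\mathsf{T}_{0}^{d-1}\mathsf{F}}\cap(x_d\in\mathsf{L})\cap(x_d\in\mathcal{C}_d)$ has $x_d$ lying on both $H_1$ and $H_d$. If the node of $H_d$ is off $H_1$ and $H_1$ is not a component of $H_d$, then $x_d$ is one of the $d$ points of $H_d\cap H_1$, which gives either $\overline{\mathsf{A}_1^{\mathsf{F}}\mathsf{T}_{0}^{d}}$ (when $x_d$ is distinct from $x_1,\dots,x_{d-1}$) or, for some $i\in\{1,\dots,d-1\}$, the locus $\overline{\mathsf{A}_1^{\mathsf{F}}\mathsf{T}_{0}^{d-1}\mathsf{F}}\cap(x_i=x_d)$. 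If the node lies on $H_1$ but $H_1$ is still not a component of $H_d$, then as in \Cref{remark1} this locus has codimension $2(d-1)+6$, exceeding the codimension $2(d-1)+5$ of the cycle, so it cannot meet a cycle of complementary dimension. The remaining possibility is $H_d=H_1\cup C_{d-1}$; here \cref{step_d_dim_red} with $n=d-1$ gives $b-a=0$, so this locus has the expected dimension, and --- taking $C_{d-1}$ smooth and transverse to $H_1$, the non-generic cases being of smaller dimension --- it is exactly $\mathsf{R}_d(d)$ of \Cref{R_nd}. That $\mathsf{R}_d(d)\subseteq\overline{\mathsf{A}_1^{\mathsf{F}}\mathsf{T}_{0}^{d-1}\mathsf{F}}$ I would verify by smoothing $H_1\cup C_{d-1}$ along a one-parameter family with equation $\ell\,g+t\,\eta$, where $\ell$ and $g$ are the defining forms of $H_1$ and $C_{d-1}$ and $\eta$ is chosen so that one of the $d-1$ nodes persists and moves off $H_1$ while the first $d-1$ marked transverse intersections are made to approach the prescribed points $x_1,\dots,x_{d-1}$; since $x_d$ is kept on $H_1\subseteq H_d$, it satisfies both incidence conditions throughout.

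The first two families on the right-hand side are then handled exactly as in \Cref{theorem_for_many_Tks_A1F_pp}: the relevant open cycles are smooth of the expected dimension (the auxiliary polynomial built there has degree at most $d$, so the construction works for $d\geq 3$), the intersection with $(x_d\in\mathsf{L})\cap(x_d\in\mathcal{C}_d)$ is transverse along them, and the local model --- the restriction $x\mapsto f(x,0)$ of the curve equation to the distinguished line has a simple zero at the new point --- shows these occur with multiplicity one. For the term $\overline{\mathsf{A}_1^{\mathsf{F}}\mathsf{T}_{0}^{d}}$ only this last simple-zero statement is needed, and it holds purely because the curve meets the line transversally there, with no need for smoothness of $\mathsf{A}_1^{\mathsf{F}}\mathsf{T}_{0}^{d}$ itself.

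The genuinely new point, and the step I expect to be the main obstacle, is showing that $\mathsf{R}_d(d)$ occurs with multiplicity exactly one. The subtlety is that $(x_d\in\mathcal{C}_d)$ \emph{contains} $\mathsf{R}_d(d)$ --- there $x_d\in H_1\subseteq H_d$ --- so the last factor restricts improperly along this component and one must rule out excess multiplicity. I would do this by a local computation at a generic point $q_0=(H_1,\,H_1\cup C_{d-1},\,p_0,\,x_1^0,\dots,x_d^0)$ of $\mathsf{R}_d(d)$, with $p_0$ one of the points of $C_{d-1}\cap H_1$ and $x_1^0,\dots,x_d^0$ general points of $H_1$ distinct from $p_0$ and from $C_{d-1}\cap H_1$. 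Fixing $H_1=\{y=0\}$, writing nearby curves as $f=y\,g+\phi$ with $\phi\in\mathcal{F}_d$ small, and using the deformation parameters, the node position, the line and the point coordinates as local coordinates, one checks that the defining function $f(x_d)$ of $(x_d\in\mathcal{C}_d)$ vanishes to order exactly one along $\mathsf{R}_d(d)$ inside $\overline{\mathsf{A}_1^{\mathsf{F}}\mathsf{T}_{0}^{d-1}\mathsf{F}}\cap(x_d\in\mathsf{L})$. The order-one claim reduces to producing a first-order deformation direction that breaks the reducibility, keeps a node near $p_0$ (so that $\phi(p_0)=0$), carries the first $d-1$ marked points to transverse intersections approaching $x_1^0,\dots,x_{d-1}^0$ (so that $\phi(x_i^0,0)=0$ for $i=1,\dots,d-1$), and yet has $\phi(x_d^0,0)\neq 0$; concretely one may take $\phi$ to be a product of $d$ lines, one through each of $p_0,x_1^0,\dots,x_{d-1}^0$ and none equal to $H_1$, which then automatically avoids $x_d^0$. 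This is precisely where transversality of $C_{d-1}$ to $H_1$ at $p_0$ and the genericity of the marked points are used. Combining the three contributions --- $\overline{\mathsf{A}_1^{\mathsf{F}}\mathsf{T}_{0}^{d}}$, the $d-1$ diagonal terms, and $\mathsf{R}_d(d)$, each with multiplicity one --- yields \cref{many_Tks_A1F_pp_Break}.
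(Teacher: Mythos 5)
Your proposal is correct and follows essentially the same route as the paper: the same set-theoretic decomposition with the dimension count showing that the reducible locus now has the expected dimension, the same closure statement $\mathsf{R}_d(d)\subset\overline{\mathsf{A}_1^{\mathsf{F}}\mathsf{T}_0\cdots\mathsf{T}_0\mathsf{F}}$ proved by an explicit smoothing of $H_1\cup C_{d-1}$ that preserves a node off the line and the $d-1$ marked transverse intersections, and the same multiplicity-one verification by showing that the evaluation of the deformed equation at $x_d$ has a simple zero along the reducible component. The only cosmetic difference is that you exhibit the first-order deformation direction as a product of $d$ lines, whereas the paper solves the node equations for the low-order coefficients $C_0$, $C_1$, $B_{00}$ in terms of the node position $(s,t)$ and reads off the linear term in $t$ of $F(a_d,0)$.
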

\begin{rem}
Let us first analyze the following set theoretic question: consider the component of 
the closure of 
\[\mathsf{A}_1^{\mathsf{F}} \underbrace{\mathsf{T}_0 \cdots \mathsf{T}_0}_{(d-1)-\textnormal{times}}\]
when the nodal point $p$ lies on the line. If the nodal point is equal to one of the other points $x_1,\,
x_2,\, \cdots,\, x_{d-1}$, 
then a dimension counting argument shows that this component will not contribute to the intersection. 
Now assume that this point is different from all the other marked points. This is possible only
if the curve is reducible and the line is one of the components 
of the curve. To see why this is so, note that if the line is not one of the components, then the nodal 
point contributes at least $2$ to the intersection and there are $d-1$ other distinct points. The curve 
intersects the line at $d+1$ points; this is impossible, since the curve has degree $d$. 

Hence the only configuration that can potentially arise (that contributes to the 
intersection) is a reducible curve, with one of the components the line and the remaining points $x_1,\, x_2,\, 
\cdots,\, x_{d-1}$ lying on the line, with the nodal point being distinct from all of them. Recall from 
\cref{step_d_dim_red} that since $n$ is now equal to $d-1$, dimensional considerations do not rule 
out this configuration. \Cref{theorem_for_many_Tks_A1F_pp_Break} states that this configuration 
and $x_{d}$ coincides with any of the $x_i$'s are the only possibility that can occur in the degeneration.
\end{rem}

\begin{proof}[\textbf{Proof of \Cref{theorem_for_many_Tks_A1F_pp_Break}}]
We continue with the setup 
of \Cref{theorem_for_many_Tks_A1F_pp}. The first and second terms on the right-hand side 
are justified as before. Let us  now justify the third term, which is the new thing that occurs. 
It will be shown that this configuration actually 
arises. In other words, we need to show that 
\begin{align}
\overline{\mathsf{R}}_d(d) & 
\subset 
\overline{\mathsf{A}_1^{\mathsf{F}} \underbrace{\mathsf{T}_{0} \cdots \mathsf{T}_{0}}_{(d-1)-\textnormal{times}} \mathsf{F}}. \label{k_clsr}
\end{align}
It suffices to show that 
\begin{align}
\mathsf{R}_d(d) & 
\subset 
\overline{\mathsf{A}_1^{\mathsf{F}} \underbrace{\mathsf{T}_{0} \cdots \mathsf{T}_{0}}_{(d-1)-\textnormal{times}} \mathsf{F}}. \label{k_clsr3}
\end{align}
Let $(H_1,\, H_d,\, p,\, x_1,\, \cdots,\, x_{d-1},\, x_d)$ be an arbitrary point of 
$\mathsf{R}_d(d)$. 
We will switch to affine space as before. Assume the line $H_1$ is the $\mathtt{x}$--axis and the nodal point
$p$ is the origin $(0,\,0)$. Also assume that 
$x_1,\, x_2,\, \cdots,\, x_{d-1}$ are given by $(a_1,\, 0)$, $(a_2,\, 0),\, \cdots,\, (a_{d-1},\,0)$. 
Since these points are all distinct and also not equal to the nodal point, it follows that
$a_1,\, a_2,\, \cdots,\, a_{d-1}$ and $0$ are all distinct.  

Since $H_d$ is a reducible curve, with the line $H_1$ being one of the reducible components, 
it is given in the affine coordinate by a polynomial $f$ of the form
\begin{align*}
f(x,y)& \ =\ y \varphi(x,y),
\end{align*} 
where $\varphi(x,\,y)$ is a polynomial of degree $d-1$. Let us write down $\varphi(x,\,y)$ explicitly as 
\begin{align}
\varphi(x,y)&\ :=\ b_{00} + b_{10} x + b_{01} y + \cdots +\frac{b_{0, d-1}}{(d-1)!} y^{d-1}. \nonumber
\end{align}
Recall that the origin is a nodal point of $f$. 
By definition, $\phi$ passes through the origin and intersect the $\mathtt{x}$--axis transversally
at origin. Hence, $b_{00} \,=\,0$ and $b_{10} \,\neq\, 0$. 

Therefore, we have
\begin{align}
\Big(f,\, (0,\,0),\, a_1,\, \cdots,\, a_{d-1},\, a_d\Big) & \,\in\,
\big(\mathsf{R}_d(d)\big)_{\mathsf{Aff}}\,\subset \, \mathcal{F}_d^* \times \mathbb{C}^2 \times \mathbb{C}^d. 
\nonumber
\end{align}
To prove \cref{k_clsr3}, it suffices to show the following: \\
given any sufficiently small open set $U$ of $\mathcal{F}_d^* \times \mathbb{C}^2 \times \mathbb{C}^d$
containing $\left(f,\, (0,\,0),\, a_1,\, \cdots,\, a_{d-1},\, a_d\right)$,
there exists an element 
\[\Big(F,\, (s,\,t),\, A_1,\, \cdots,\, A_{d-1},\, A_d\Big)\ \in\ U\] 
such that
\begin{align}
\Big(F,\, (s,\,t),\, A_1,\, \cdots,\, A_{d-1},\, A_d\Big) \ \in\ 
\Big(\mathsf{A}_1^{\mathsf{F}} \underbrace{\mathsf{T}_{0} \cdots
\mathsf{T}_{0}}_{(d-1)-\textnormal{times}} \mathsf{F}\Big)_{\mathsf{Aff}}. 
\label{CH_step3_F_const}
\end{align}

This element in \eqref{CH_step3_F_const} will be constructed explicitly. First of all, note that the curve
given by $F$, if exists, passes through the points $(A_1,\, 0),\,\cdots,\, (A_{d-1},\, 0)$. Hence, $F$ 
has to be of the form
\begin{align*}
F(x,y)&\ =\ (x-A_1) \cdots (x-A_{d-1}) (C_0 + C_1 x) + y \Phi(x,y),
\end{align*}
where $\Phi$ is a polynomial of degree $d-1$. 
Write down $\Phi$ explicitly as 
\begin{align*} 
\Phi(x,y) &\ :=\ B_{00} + B_{10} x + B_{01} y + \cdots + \frac{B_{0, d-1}}{(d-1)!} y^{d-1}.
\end{align*}
Since $U$ is a sufficiently small open neighbourhood of 
$\left(f,\, (0,\,0),\, a_1,\, \cdots,\, a_{d-1},\, a_d\right)$, 
it follows that $C_0,\, C_1$ and $B_{00}$ are small and $B_{10}$ is non-zero. 
Furthermore, $A_i$ are all close to $a_i$. Hence, $A_1,\, \cdots,\, A_{d-1}$ 
are all distinct and different from $0$. 
We now wish to find a point $(s,\,t)$ close to the origin, such that $t\,\neq\, 0$ (i.e., the point
$(s,\,t)$ does not lie on 
the $\mathtt{x}$--axis) 
and is a nodal point of $F$. Hence, it is needed to solve the following set of equations 
\begin{align}
F(s,t) & \,=\, 0, \qquad F_x(s,t) \,=\, 0 \qquad \textnormal{and} \qquad F_y(s,t) \,=\, 0, \nonumber \\   
\qquad \textnormal{where} \qquad t & \,\neq\, 0 \qquad \textnormal{and} \qquad (s,\,t) \quad \textnormal{is small}.   \label{ch_step3_node_eqn}
\end{align}  
To make the computations more convenient, rewrite $F(x,\,y)$ as follows: 
\begin{align}
F(x,y)& \,=\, Q(x) (C_0 + C_1 x) + y (B_{00} + P(x,y)), \qquad \textnormal{where} \nonumber \\ 
Q(x) & \,:=\, (x-A_1) \cdots (x-A_{d-1}), \qquad \textnormal{and} \nonumber \\  
P(x,y)&\,:=\, B_{10} x + B_{01} y + \cdots + \frac{B_{0, d-1}}{(d-1)!} y^{d-1}. \label{F_rewrite}
\end{align}
Using \cref{ch_step3_node_eqn}, 
we can solve for $C_0,\, C_1$ and $B_{00}$, which produce  
\begin{align}
B_{00}&\,=\,-P(s,t)-t P_y(s,t), \nonumber \\ 
C_0&\,=\, -\frac{-t^2Q(s) P_y(s,t) - s t^2 Q^{\prime}(s) P_y(s,t)-stQ(s)P_x(s,t)}{Q(s)^2}, \qquad \textnormal{and} \nonumber \\ 
C_1&\,=\, -\frac{t^2Q^{\prime}(s)P_y(s,t) +t Q(s) P_x(s,t)}{Q(s)^2}. \label{sol_ch_step3}
\end{align}
Since the $A_i$ are all distinct from zero, it follows  that $Q(0) \,\neq\, 0$. 
Since $s$ is close to $0$, $Q(s) \,\neq\, 0$.  
Hence, we are able to construct the 
desired solution given by \cref{sol_ch_step3}. This proves \cref{k_clsr3}, and 
hence proves \cref{k_clsr}.  

Next, to compute the multiplicity of the intersection, first note that \textit{every} curve close to 
$f$ is given by \cref{sol_ch_step3}. To compute the multiplicity of the intersection, it is necessary to    
compute $F(a_d,\, 0)$, where $a_d$ is distinct from 
$0,\, a_1,\, \cdots,\, a_{d-1}$. Using \cref{F_rewrite,sol_ch_step3}, it follows that 
\begin{align}
F(a_d, 0) & \ =\ Q(a_d) (C_0 + C_1 a_d) \nonumber \\ 
          &\ =\ \-\frac{Q(a_d) a_d B_{10}}{Q(0)} t + O(t^2). \label{mult_CH_step3}
\end{align} 
Since $a_d$ is distinct from the other $a_i$, it follows (using \cref{F_rewrite} and
the fact that $A_i$'s are close to $a_i$) that $Q(a_d)$ is non-zero. 
Furthermore, $a_d \,\neq\, 0$, since $(a_d,\, 0)$ is not the nodal point of the curve (which is the origin). 
Since $Q(a_d) \,\neq\, 0$, $a_d \,\neq\, 0$, $B_{10} \,\neq\, 0$ and $Q(0) \,\neq\, 0$, we conclude 
from \cref{mult_CH_step3} that the multiplicity of the intersection is $1$. 
The multiplicity computation completes the 
proof of \Cref{theorem_for_many_Tks_A1F_pp_Break}. 
\end{proof}

The statement of \Cref{theorem_for_many_Tks_A1F_pp_Break} is also false when $n\,=\,d$. Before
stating the correct statement for $n\,=\,d$, we need to develop yet more notation. Define
\begin{align}\label{definition_RT_1}
\big(\mathsf{R}\mathsf{T}_1\big)_n(d) \ \subset\ \mathsf{M}^1_{n}(d)
\end{align} 
to be the locus of all $(H_1,\, H_d,\, p,\, x_1,\, \cdots,\, x_{n})$ such 
that
\begin{itemize}
\item the curve $H_d$ is the union of the line $H_1$ and a degree $d-1$ curve, 
\item the degree $d-1$ curve is smooth, 
\item the degree $d-1$ curve intersects $H_1$ tangentially to first order at the point $p$; 
the remaining points of intersection are 
transverse, and
\item the points $x_1, \, \cdots, \, x_{n}$ are all distinct and lie on $H_1$; they are also distinct from $p$. 
\end{itemize}
The space can be pictorially described as follows: 

\begin{center}
            \begin{figure}[h]
                \centering
                \includegraphics[scale=1]{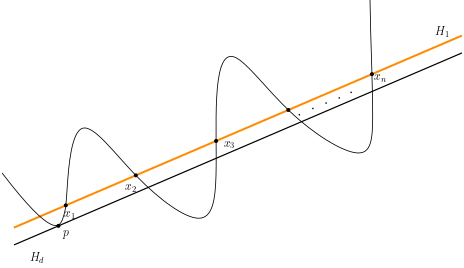}
                \caption{$\big(\mathsf{R}\mathsf{T}_1\big)_n(d)$}
                \label{RT_1nd}
            \end{figure}
        \end{center}

Also define 
\begin{align}\label{definition_RA_1}
 \big(\mathsf{R}\mathsf{A}_1^{\mathsf{F}}\big)_n(d)\ \subset\ \mathsf{M}^1_{n}(d)
\end{align}
to be 
the locus of all $(H_1,\, H_d,\, p,\, x_1,\, \cdots,\, x_{n})$ such that
\begin{itemize}
\item the curve $H_d$ is the union of the line $H_1$ and a degree $d-1$ curve, 
\item the degree $d-1$ curve is has a nodal point at $p$,
\item the nodal point $p$ does not lie on $H_1$, 
\item the degree $d-1$ curve intersects $H_1$ transversally, and
\item the points $x_1, \, \cdots,\, x_{n}$ are all distinct and lie on $H_1$. 
\end{itemize}
The space can be pictorially described as follows: 

\begin{center}
            \begin{figure}[h]
                \centering
                \includegraphics[scale=1]{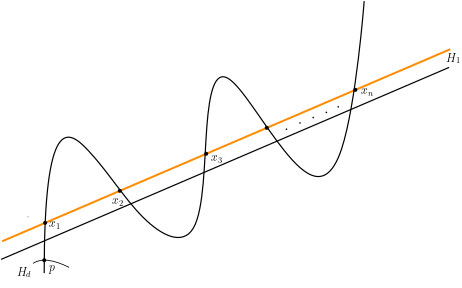}
                \caption{$\big(\mathsf{R}\mathsf{A}_1^{\mathsf{F}}\big)_n(d)$}
                \label{RA_1^Fnd}
            \end{figure}
        \end{center}

Finally, define 
\begin{align}\label{definition_R^i_n}
 \mathsf{R}_{n}^{i}(d)\ \subset\ \mathsf{M}^1_{n}(d)
\end{align}
to be the locus of all $(H_1,\, H_d,\, p,\, x_1,\, \cdots,\, x_n)$ such that
\begin{itemize}
\item the curve $H_d$ is the union of $H_1$ and a degree $d-1$ curve,   
\item the degree $d-1$ curve is smooth, 
\item the degree $d-1$ curve intersects $H_1$ transversally at all the points,
\item the point $p$ is one of the points of intersection of the $d-1$ curve and $H_1$,
\item the points $x_1, \, \cdots ,\, x_n$ are all distinct and lie on $H_1$, and
\item The point $p$ is equal to $x_i$.
\end{itemize}
The space can be pictorially described as follows: 

\begin{center}
            \begin{figure}[h]
                \centering
                \includegraphics[scale=1]{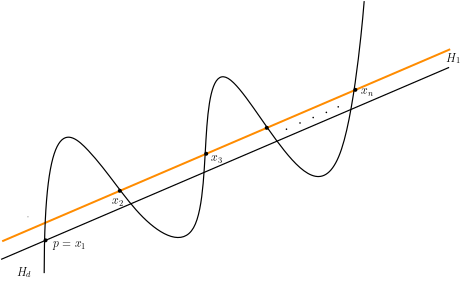}
                \caption{$\mathsf{R}_{n}^{1}(d)$}
                \label{Rn1d}
            \end{figure}
        \end{center}

\begin{thm}\label{theorem_for_many_Tks_A1F_pp_Break_Fin}
Let $d\,\in\, \mathbb{Z}^{\geq 3}$. Then, 
the following equality of homology classes in $\mathsf{M}_{{d+1}}^1(d)$ holds: 
\begin{align}\label{many_Tks_A1F_pp_Break_Fin}
\big[\mathsf{A}_1^{\mathsf{F}} \underbrace{\mathsf{T}_{0} \cdots \mathsf{T}_{0}}_{d-\textnormal{times}}\mathsf{F}\big]\cdot 
\big[\big(x_{d+1} \in \mathsf{L}\big)\big]\cdot \big[\big(x_{d+1} \in \mathcal{C}_d\big)\big] \,\,&= \,\,
\sum_{i=1}^{d}\big[\mathsf{A}_1^{\mathsf{F}}
\underbrace{\mathsf{T}_{0}\cdots \mathsf{T}_{0}}_{d-\textnormal{times}} \mathsf{F}\big] \cdot \big[\big(x_i = x_{d+1}\big)\big] \nonumber \\
& + \sum_{i=1}^d \big[\mathsf{R}^i_{d+1}(d)\big] \nonumber \\ 
& + \big[\big(\mathsf{R}\mathsf{A}_1^{\mathsf{F}}\big)_{d+1}(d)\big] + 
2 \big[\big(\mathsf{R}\mathsf{T}_1\big)_{d+1}(d)\big].
\end{align}
\end{thm}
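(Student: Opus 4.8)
The plan is to follow the template of the proofs of \Cref{theorem_for_many_Tks_A1F_pp} and \Cref{theorem_for_many_Tks_A1F_pp_Break}: first identify the components of the set-theoretic intersection on the left of \cref{many_Tks_A1F_pp_Break_Fin}, then verify that each locus appearing on the right lies in the closure $\overline{\mathsf{A}_1^{\mathsf{F}} \mathsf{T}_{0}\cdots\mathsf{T}_{0}\mathsf{F}}$, and finally compute the intersection multiplicities. The contribution of the interior --- where $H_d$ does not contain the line $H_1$ --- is handled exactly as for the first term of \cref{many_Tks_A1F_pp}: since such an $H_d$ meets $H_1$ in only $d$ points, forcing $x_{d+1}$ onto $H_1\cap H_d$ makes it coincide with one of $x_1,\dots,x_d$, and the transversality computation there shows that this produces $\sum_{i=1}^{d}\big[\mathsf{A}_1^{\mathsf{F}} \mathsf{T}_{0}\cdots\mathsf{T}_{0}\mathsf{F}\big]\cdot\big[(x_i=x_{d+1})\big]$, each with multiplicity one. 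So the point of the proof is to analyze the boundary locus where $H_1$ is a component of $H_d$.

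Write $H_d=H_1\cup C_{d-1}$ with $\deg C_{d-1}=d-1$. On this stratum $x_{d+1}\in H_1$ already forces $x_{d+1}\in H_d$; using \cref{diag_inc_std} together with the relation $y_d=y_1+y_{d-1}$ along the stratum, the class $\big[(x_{d+1}\in\mathcal{C}_d)\big]$ restricts to $\big[(x_{d+1}\in\mathsf{L})\big]+\big[(x_{d+1}\in C_{d-1})\big]$, and this is the book-keeping that organizes the remaining terms. The marked node $p$, which is off $H_1$ in the interior, must limit to a singular point of $H_1\cup C_{d-1}$; I would run through the cases and match each against the expected dimension $\delta_d+1$ of both sides of \cref{many_Tks_A1F_pp_Break_Fin}. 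If $p$ limits to a node of $C_{d-1}$ away from $H_1$ one lands in $\big(\mathsf{R}\mathsf{A}_1^{\mathsf{F}}\big)_{d+1}(d)$. If $p$ limits to a point of $C_{d-1}\cap H_1$, then by the count of \cref{step_d_dim_red} with $n=d$ the generic such configuration $\mathsf{R}_{d+1}(d)$ has dimension $\delta_d+2$ --- one too large --- and the only ways to cut it to dimension $\delta_d+1$ are to force $p$ to coincide with one of $x_1,\dots,x_d$ (the loci $\mathsf{R}^i_{d+1}(d)$) or to let $C_{d-1}$ be tangent to $H_1$ at $p$ (the locus $\big(\mathsf{R}\mathsf{T}_1\big)_{d+1}(d)$); every other limiting configuration has codimension strictly greater than $2d+5$ and hence cannot meet a cycle of complementary dimension. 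That the three loci $\big(\mathsf{R}\mathsf{A}_1^{\mathsf{F}}\big)_{d+1}(d)$, $\mathsf{R}^i_{d+1}(d)$ and $\big(\mathsf{R}\mathsf{T}_1\big)_{d+1}(d)$ actually lie in $\overline{\mathsf{A}_1^{\mathsf{F}} \mathsf{T}_{0}\cdots\mathsf{T}_{0}\mathsf{F}}$ I would check by the explicit smoothing used for \cref{k_clsr3}: passing to affine coordinates with $H_1$ the $\mathtt{x}$--axis, write a nearby curve as $F(x,y)=c\,(x-A_1)\cdots(x-A_d)+y\,\Phi(x,y)$ with $c$ small, the $A_k$ near the $x_k$, and $\Phi$ of degree $d-1$ near the defining polynomial of $C_{d-1}$; then $F(x,0)$ vanishes exactly at the $A_k$, and since $F$ is a small perturbation of $y\,\Phi$ near whichever singular point of $H_1\cup C_{d-1}$ is being tracked, one solves for a node of $F$ close to that point and off the line, just as in \cref{sol_ch_step3}.

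It remains to compute the three multiplicities. For $\big(\mathsf{R}\mathsf{A}_1^{\mathsf{F}}\big)_{d+1}(d)$ and for each $\mathsf{R}^i_{d+1}(d)$ the smoothing above is essentially unique, and the relevant incidence condition $F(a_{d+1},0)=c\,(a_{d+1}-A_1)\cdots(a_{d+1}-A_d)$ vanishes to order exactly one in the smoothing parameter $c$ (the product being nonzero because $x_{d+1}$ is distinct from $x_1,\dots,x_d$), so these appear with coefficient one, precisely as in the multiplicity computation at the end of the proof of \Cref{theorem_for_many_Tks_A1F_pp_Break}. The coefficient $2$ in front of $\big[\big(\mathsf{R}\mathsf{T}_1\big)_{d+1}(d)\big]$ is the delicate point: a first-order tangency of $C_{d-1}$ with $H_1$ at $p$ is the collision of two of the transverse intersection points of $C_{d-1}$ with $H_1$ --- equivalently $H_1\cup C_{d-1}$ acquires a tacnode at $p$, which contributes as two nodes --- so the marked node of a nearby curve can be carried onto $p$ along either of two branches, and $\overline{\mathsf{A}_1^{\mathsf{F}} \mathsf{T}_{0}\cdots\mathsf{T}_{0}\mathsf{F}}$ therefore meets $\big(\mathsf{R}\mathsf{T}_1\big)_{d+1}(d)$ with multiplicity two. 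A local computation in the standard tacnode model, in the spirit of the ones above, makes this precise. I expect the two main obstacles to be: (i) controlling the locus $\mathsf{R}_{d+1}(d)$, which is one dimension larger than the intersection product, so one must argue carefully that it contributes no cycle of its own but exactly accounts for the $\mathsf{R}^i_{d+1}(d)$ and $\big(\mathsf{R}\mathsf{T}_1\big)_{d+1}(d)$ terms once the second divisor $\big[(x_{d+1}\in\mathcal{C}_d)\big]$ is imposed; and (ii) the factor-two multiplicity along $\big(\mathsf{R}\mathsf{T}_1\big)_{d+1}(d)$ just discussed.
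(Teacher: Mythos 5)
Your proposal is correct and follows essentially the same route as the paper: the same splitting of the boundary where $H_1$ becomes a component of $H_d$ into the loci $\mathsf{R}^i_{d+1}(d)$, $\big(\mathsf{R}\mathsf{A}_1^{\mathsf{F}}\big)_{d+1}(d)$ and $\big(\mathsf{R}\mathsf{T}_1\big)_{d+1}(d)$ via the dimension count of \cref{step_d_dim_red}, the same explicit affine smoothing $F=C_0\,(x-A_1)\cdots(x-A_d)+y\,\Phi(x,y)$ to prove the closure inclusions, and the same evaluation of $F(a_{d+1},0)$ to read off the multiplicities. The one step you defer --- the coefficient $2$ on $\big(\mathsf{R}\mathsf{T}_1\big)_{d+1}(d)$ --- is carried out in the paper by solving the node equations with the node at height $t$ off the line, which forces $C_0\sim t^2$ and hence $F(a_{d+1},0)=Q(a_{d+1})\tfrac{B_{01}}{Q(0)}t^2+O(t^3)$; this confirms your tacnode heuristic, but also shows that the order of vanishing must be measured in the transverse parameter $t$ rather than in the smoothing coefficient $c=C_0$ (in $c$ alone all three boundary terms would appear to have order one).
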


\begin{proof}[{\textbf{Proof of Theorem \ref{theorem_for_many_Tks_A1F_pp_Break_Fin}}}]
We continue with the set-up 
of \Cref{theorem_for_many_Tks_A1F_pp} and  \Cref{theorem_for_many_Tks_A1F_pp_Break}. 
The first term on the right-hand side of \cref{many_Tks_A1F_pp_Break_Fin}
is 
justified as before. 
Let us  now justify the second, third and fourth terms. 
Let us first analyze the following situation: consider the component of 
the closure of 
\[\mathsf{A}_1^{\mathsf{F}} \underbrace{\mathsf{T}_0 \cdots \mathsf{T}_0}_{d-\textnormal{times}}\]
when the nodal point $p$ lies on the line. 
Unlike before, it is 
the component of the closure where 
the nodal point is equal to one of the other points $x_1, x_2, \ldots, x_{d}$ 
will actually contribute to the intersection. This precisely corresponds to the 
second term on the right-hand side of \cref{many_Tks_A1F_pp_Break_Fin}. 

Next, let us analyze the component of the closure  
\[\mathsf{A}_1^{\mathsf{F}} \underbrace{\mathsf{T}_0 \cdots \mathsf{T}_0}_{d-\textnormal{times}}\]
when the nodal point $p$ lies on the line, but is not equal to  
any of the other points $x_1,\, x_2,\, \cdots,\, x_{d}$. 
In such a case, it will be shown that, in the closure, the line is actually going to be tangent to one
of the branches of the node. This corresponds to the third term on  
the right hand side of \cref{many_Tks_A1F_pp_Break_Fin}; as it will be shown, it contributes with a 
multiplicity of $2$.

The component of the closure  
\[\mathsf{A}_1^{\mathsf{F}} \underbrace{\mathsf{T}_0 \cdots \mathsf{T}_0}_{d-\textnormal{times}}\]
when the nodal point $p$ does not lie on the line corresponds to the second term  
the right-hand side of \cref{many_Tks_A1F_pp_Break_Fin}. 

For the closure and multiplicity statements, we switch to affine spaces as usual. First make a general 
convention that works for all the spaces involved here.

\begin{convention}\label{convention}
Given any point $$(H_1,\, H_d,\, p,\, x_1,\, \cdots,\, x_{d},\, x_{d+1})
\,\in \,\overline{\mathsf{A}_1^{\mathsf{F}} \underbrace{\mathsf{T}_{0}
\cdots \mathsf{T}_{0}}_{d-\textnormal{times}} \mathsf{F}} \cap (x_{d+1} \in L)$$ in the affine
coordinate, the line $H_1$ is taken to be the $\mathtt{x}$--axis, the nodal point $p$ is assumed to be
the point $(a,\,b)$, and the induced element in the affine space is denoted by
$$\Big(f,\, (a,\,b),\, a_1,\, \cdots,\, a_{d},\, a_{d+1}\Big)
\,\in\, \mathcal{F}_d^* \times \mathbb{C}^2 \times \mathbb{C}^{d+1}. $$
Here, $x_1,\, x_2,\, \cdots,\, x_{d}$ are given by $(a_1,\, 0)$, $(a_2,\, 0),\, \cdots,\, (a_{d},\,0)$. 
Since these points are all distinct, it follows that $a_1,\, a_2,\, \cdots,\, a_{d}$ are all distinct.
In case, $p$ lines on $H_1$, tactically it is assumed to be the origin.
\end{convention}

Consider the second term on the 
right hand side of \cref{many_Tks_A1F_pp_Break_Fin}. 
For the closure statement, we need to show that for all $1 \,\leq\, i \,\leq\, d$,
\begin{align}
\overline{\mathsf{R}_{d+1}^i(d)} & 
\ \subset \
\overline{\mathsf{A}_1^{\mathsf{F}} \underbrace{\mathsf{T}_{0} \cdots \mathsf{T}_{0}}_{d-\textnormal{times}}
\mathsf{F}}. \label{k_clsr_ag}
\end{align}
As before, we observe that it suffices to show that 
\begin{align}
\mathsf{R}_{d+1}^i(d) & 
\ \subset \
\overline{\mathsf{A}_1^{\mathsf{F}} \underbrace{\mathsf{T}_{0} \cdots \mathsf{T}_{0}}_{d-\textnormal{times}} \mathsf{F}}. \label{k_clsr3_ag}
\end{align}

It is enough to consider the case $i\,=\,d$; other assertions are symmetric.
Let $(H_1,\, H_d,\, p,\, x_1,\, \cdots,\, x_{d},\, x_{d+1})$ be an arbitrary point of 
$\mathsf{R}_{d+1}^i(d)$. Switching to the affine spaces, the above convention will be followed.
Since $x_d$ is equal to the nodal point, it follows that $a_d \,=\,0$.

Since $H_d$ is a reducible curve, with the line $H_1$ being one of the reducible components, 
it is given by the zero set of a polynomial $f$, which is of the type 
\begin{align*}
f(x,y)& \ =\ y \varphi(x,\,y),
\end{align*} 
where $\varphi(x,\,y)$ is a polynomial of degree $d-1$. 
Write down $\varphi(x,y)$ explicitly as 
\begin{align*}
\varphi(x,y)&\ :=\ b_{00} + b_{10} x + b_{01} y + \cdots + \frac{b_{0, d-1}}{(d-1)!} y^{d-1}. 
\end{align*}
Recall that the origin is a nodal point of $f$. The degree $d-1$ component passes through the origin
and intersect the $\mathtt{x}$--axis transversally at origin. Hence, $b_{00} \,=\,0$ and $b_{10} \,\neq\, 0$. 

Now note that 
\begin{align} 
\Big(f,\, (0,\,0),\, a_1,\, \cdots,\, a_{d},\, a_{d+1}\Big) & \,\in\, 
\Big(\mathsf{R}_{d+1}^i(d)\Big)_{\mathsf{Aff}}  
\,\subset\, \mathcal{F}_d^* \times \mathbb{C}^2 \times \mathbb{C}^{d+1}. \nonumber  
\end{align}
To prove \cref{k_clsr3_ag} (equivalently, \cref{k_clsr_ag}), it suffices to show the following: given any
sufficiently small open set $U$ of $\mathcal{F}_d^* \times \mathbb{C}^2 \times \mathbb{C}^{d+1}$ containing
$\left(f,\, (0,\,0),\, a_1,\, \cdots,\, a_{d},\, a_{d+1}\right) $,
there exists an element
\begin{align*}
\Big(F,\, (s,\,t),\, A_1,\, \cdots,\, A_{d},\, A_{d+1}\Big) \,\in\, 
\Big(\mathsf{A}_1^{\mathsf{F}} \underbrace{\mathsf{T}_{0} \cdots \mathsf{T}_{0}}_{d-\textnormal{times}} \mathsf{F}\Big)_{\mathsf{Aff}} \cap U.
\end{align*}

This is shown using the idea of the proof of \Cref{theorem_for_many_Tks_A1F_pp_Break}.
Since the curve $F$ passes through the points $(A_1,\, 0),\, \cdots,\, (A_{d-1},\, 0)$ and
$(A_{d},\, 0)$, it is of the form
\begin{align*}
F(x,\,y)&\, =\, (x-A_1) \cdots (x-A_{d-1}) (x-A_d) C_0 + y \Phi(x,\,y),
\end{align*}
where $\Phi$ is a polynomial of degree $d-1$. Write $\Phi$ explicitly as 
\begin{align*} 
\Phi(x,y) &\,:=\, B_{00} + B_{10} x + B_{01} y + \cdots + \frac{B_{0, d-1}}{(d-1)!} y^{d-1}.
\end{align*}
Since $U$ is a sufficiently small open neighbourhood of $ \left(f,\, \left(0,\,0\right),\, a_1,\, \cdots,
\,a_{d-1},\, a_d,\, a_{d+1}\right)$, it is deduced that $C_0$, $A_d$ and $B_{00}$ are small and $B_{10}$ is non-zero 
(recall that $a_d\,=\,0$). Furthermore, 
$A_1,\, \cdots,\, A_{d-1}$ are all distinct and different from $0$. 
We now wish to find a point $(s,\,t)$ close to the origin, such that $t\,\neq\, 0$ (that is, the point $(s,\,
t)$ does not lie on the $\mathtt{x}$--axis) 
and is a nodal point of $F$. Hence, it is necessary to solve the following set of equations 
\begin{align}
F(s,\,t) & \,=\, 0, \qquad F_x(s,\,t) \,=\, 0 \qquad \textnormal{and} \qquad F_y(s,\,t) \,=\, 0, \nonumber \\   
\qquad \textnormal{where} \qquad t & \,\neq\, 0 \qquad \textnormal{and} \qquad (s,\,t)
\quad \textnormal{is small}. \label{ch_step3_node_eqn_ag}
\end{align}   
To make the computations more convenient, rewrite $F(x,\,y)$ as follows: 
\begin{align} 
F(x,\,y)& \,=\, Q(x) (x-A_d) C_0 + y (B_{00} + P(x,\,y)), \nonumber \\ 
\qquad \textnormal{where} \quad &Q(x)\,:=\, (x-A_1) \cdots (x-A_{d-1}),  \nonumber \\  
\qquad \textnormal{and }&P(x,\,y)\,:=\, B_{10} x + B_{01} y + \cdots + \frac{B_{0, d-1}}{(d-1)!} y^{d-1}.
\label{F_rewrite_ag}
\end{align}
Using \cref{ch_step3_node_eqn_ag}, solve for $C_0$, $A_d$ and $B_{00}$, and conclude that
\begin{align}
B_{00}&\,=\,-P(s,\,t)-t P_y(s,\,t), \nonumber \\ 
C_0&\,=\, -\frac{t^2Q^{\prime}(s)P_y(s,\,t) + tQ(s) P_x(s,\,t)}{Q(s)^2}, \nonumber \\ 
A_d & \,=\, s + \frac{B_{01}} {B_{10}} t + O(t^2). \label{sol_ch_step3_ag}
\end{align}
Note that $B_{00},\, C_0$ and $A_d$ are indeed small (i.e., they go to zero as $(s,\,t)$ goes to $(0,\,0)$). 
Hence, we have constructed solutions to \cref{F_rewrite_ag}, which proves  \cref{k_clsr_ag}.

To compute the multiplicity of the intersection, first note that \textit{every} curve close to 
$f$ is given by \cref{sol_ch_step3_ag}. In order to compute the multiplicity of the intersection, we 
simply need to compute $F(a_{d+1},\, 0)$, where $a_{d+1}$ is distinct from 
$0,\, a_1,\, \cdots,\, a_{d-1}$. Hence, using \cref{F_rewrite_ag,sol_ch_step3_ag} it follows that 
\begin{align}
F(a_{d+1}, 0) & \,=\, Q(a_{d+1})(a_{d+1}-A_d)C_0 \nonumber \\ 
          & =\, Q(a_{d+1})(a_{d+1}-A_d) \frac{B_{10}}{Q(0)} t + O(t^2). \label{mult_CH_step3_ag}
\end{align} 
Since $a_{d+1}$ is distinct from the other $a_i$, it follows using \cref{F_rewrite_ag}
(and using the fact that $A_i$'s are close to $a_i$) that $Q(a_{d+1})$ is non-zero. 
Furthermore, $a_{d+1} \,\neq\, 0$, because $(a_{d+1},\, 0)$ is not the nodal point of the curve (which is
the origin). Since 
\[Q(a_{d+1}) \,\neq\, 0, \qquad \big(a_{d+1}-A_d\big) \,\neq\,0, 
\qquad  B_{10} \,\neq\, 0 \qquad  \textnormal{and} \qquad  
Q(0) \,\neq\, 0,\] 
it follows from \cref{mult_CH_step3_ag} that the 
multiplicity of the intersection is $1$. 
The multiplicity computation completes the justification for 
the second term on the 
right-hand side of \cref{many_Tks_A1F_pp_Break_Fin} (on the level of cycles). 

Next, the fourth term on the right-hand side of \cref{many_Tks_A1F_pp_Break_Fin} will be justified.
As before, it suffices to show that 
\begin{align}
\big(\mathsf{R}\mathsf{T}_1\big)_{d+1}(d) & 
\ \subset\ 
\overline{\mathsf{A}_1^{\mathsf{F}} \underbrace{\mathsf{T}_{0} \cdots \mathsf{T}_{0}}_{d-\textnormal{times}} \mathsf{F}}. \label{k_clsr2_ag2}
\end{align}
Let $(H_1,\, H_d,\, p,\, x_1,\, \cdots,\, x_{d},\, x_{d+1})$ be an arbitrary point of 
$\big(\mathsf{R}\mathsf{T}_1\big)_{d+1}(d)$. As before, switch to affine spaces. Following \Cref{convention},
assume that $p$ --- the point of tangency of $f$ --- is the origin. Since $H_d$ is a reducible curve, with
the line $H_1$ being one of the reducible components, it follows that 
$H_d$ is given by the zero set of a polynomial $f$, which is of the type 
\begin{align*}
f(x,y)& \ =\ y \varphi(x,y),
\end{align*} 
where $\varphi(x,\,y)$ is a polynomial of degree $d-1$. Write $\varphi(x,\,y)$ explicitly as 
\begin{align*}
\varphi(x,\,y)&\,:=\, b_{00} + b_{10} x + b_{01} y + \cdots +\frac{b_{0, d-1}}{(d-1)!} y^{d-1}. 
\end{align*}
Since the degree $d-1$ component is tangent of first order to the line 
at the origin, it follows that $b_{00}\,=\,0$,  $b_{10}\,=\,0$ and $b_{20} \,\neq\, 0$. 
Furthermore, that the point is a smooth point of the degree $d-1$ curve implies that $b_{01} \,\neq\, 0$. 

Now note that 
\begin{align*} 
\Big(f, (0,0), a_1, \ldots, a_{d}, a_{d+1}\Big) & \,\in\, 
\Big(\big(\mathsf{R}\mathsf{T}_1\big)_{d+1}(d)\Big)_{\mathsf{Aff}}  
\, \subset\, \mathcal{F}_d^* \times \mathbb{C}^2 \times \mathbb{C}^{d+1}. 
\end{align*}
To prove \cref{k_clsr2_ag2}, it suffices to show the following: given small a neighbourhood $U$ of
$\left(f,\, (0,\,0),\, a_1,\, \cdots,\, a_{d},\, a_{d+1}\right)$, there exists an element
\begin{align*}
\Big(F,\, (s,\,t),\, A_1,\, \cdots,\, A_{d},\, A_{d+1}\Big) \,\in\, 
\Big(\mathsf{A}_1^{\mathsf{F}} \underbrace{\mathsf{T}_{0} \cdots \mathsf{T}_{0}}_{d-\textnormal{times}} \mathsf{F}\Big)_{\mathsf{Aff}} \cap U. 
\end{align*}

We will follow the previous idea.
Note that $F$ passes through the points $(A_1,\, 0),\, \cdots,\, (A_{d},\, 0)$. 
Hence, $F$ has to be of the form
\begin{align*}
F(x,y)&\ =\ (x-A_1) \cdots (x-A_{d-1}) (x-A_d) C_0 + y \Phi(x,\,y),
\end{align*}
where $\Phi$ is a polynomial of degree $d-1$. Write $\Phi$ explicitly as 
\begin{align*}
\Phi(x,\,y) &\,:=\, B_{00} + B_{10} x + B_{01} y + \cdots + \frac{B_{0, d-1}}{(d-1)!} y^{d-1}.
\end{align*}
Since $U$ is a sufficiently small open neighbourhood of 
$\left( f,\, \left(0,\,0\right), \,a_1, \,\cdots,\, a_{d-1},\, a_d,\, a_{d+1}\right)$,
it follows that $C_0$, $B_{00}$ and $B_{10}$ are small while
$B_{20}$ and $B_{01}$ are non-zero. 
Furthermore, $A_1,\, \cdots,\, A_{d}$ 
are all distinct and different from $0$. 
We now wish to find a point $(s,\,t)$ close to the origin, such that $t\,\neq\, 0$ (i.e., the point
$(s,\,t)$ does not lie on the $\mathtt{x}$--axis) 
and is a nodal point of the curve defined by the zero set of $F$. Hence, we need to solve the following
set of equations 
\begin{align}
F(s,\,t) & \,=\, 0, \qquad F_x(s,\,t) \,=\, 0 \qquad \textnormal{and} \qquad F_y(s,\,t) \,=\, 0, \nonumber \\   
\qquad \textnormal{where} \qquad t & \,\neq\, 0 \qquad \textnormal{and} \qquad (s,\,t) \quad \textnormal{is small}.   
\label{ch_step3_node_eqn_ag2}
\end{align}   
To make the computations more convenient, rewrite $F(x,\,y)$ as follows: 
\begin{align} 
F(x,\,y)& \,=\, Q(x) C_0 + y (B_{00} + B_{10} x + P(x,\,y)),  \nonumber \\ 
\qquad \textnormal{where} \quad & Q(x)\,:=\, (x-A_1) \cdots (x-A_{d}), \nonumber \\  
 \qquad \textnormal{and }& P(x,\,y)\,:=\, B_{01} y + \frac{B_{20}}{2} x^2 + B_{11} xy +
\frac{B_{02}}{2} y^2 + \cdots + \frac{B_{0, d-1}}{(d-1)!} y^{d-1}. \label{F_rewrite_ag2}
\end{align}
Using \cref{ch_step3_node_eqn_ag2,F_rewrite_ag2}, 
solve for $C_0$, $B_{00}$ and $B_{10}$ to conclude that
\begin{align}
C_0 & \,=\, \frac{t^2 P_y(s,t)}{Q(s)},\nonumber \\ 
B_{00} & \,=\, -\Big(\frac{P(s,t)Q(s)+t Q(s) P_y(s,t) -st Q^{\prime}(s)P_y(s,t)-sQ(s)P_x(s,t)}{Q(s)}\Big), \quad \textnormal{and} \nonumber \\ 
B_{10}& \,=\, -\Big(\frac{tQ^{\prime}(s)P_y(s,t) + Q(s) P_x(s,t)}{Q(s)}\Big). \label{sol_ch_step3_ag985}
\end{align}
This proves \cref{k_clsr2_ag2}. 

Next, to compute the multiplicity of the intersection, first note that \textit{every} curve close to 
$f$ is given by \cref{sol_ch_step3_ag985}. To compute the multiplicity of the intersection, we have to    
compute $F(a_{d+1}, \,0)$, where $a_{d+1}$ is distinct from 
$0,\, a_1,\, \cdots,\, a_{d}$. Using \cref{F_rewrite_ag2,sol_ch_step3_ag985} it follows that 
\begin{align}
F(a_{d+1}, 0) & \,=\, Q(a_{d+1}) \frac{B_{01}}{Q(0)} t^2 + O(t^3). \label{mult_CH_step3_ag2}
\end{align} 
Since 
\[Q(a_{d+1}) \,\neq\, 0, \qquad Q(0) \,\neq\,0 \qquad  \textnormal{and} \qquad  
B_{01} \,\neq\, 0,\] 
it follows from \cref{mult_CH_step3_ag2} that the 
multiplicity of the intersection is $2$. 
The multiplicity computation completes the
justification for 
the fourth term on the right-hand side  
of \eqref{many_Tks_A1F_pp_Break_Fin} (on the level of cycles).

It remains to justify the third term on the right-hand side of \cref{many_Tks_A1F_pp_Break_Fin}. 
We need to show that 
\begin{align}
\big(\mathsf{R}\mathsf{A}_1^{\mathsf{F}}\big)_{d+1}(d) & 
\ \subset\ 
\overline{\mathsf{A}_1^{\mathsf{F}} \underbrace{\mathsf{T}_{0} \cdots \mathsf{T}_{0}}_{d-\textnormal{times}} \mathsf{F}}. \label{k_clsr_ag5}
\end{align}
Let $(H_1,\, H_d,\, p,\, x_1,\, \cdots,\, x_{d},\, x_{d+1})$ be an arbitrary point of 
$\big(\mathsf{R}\mathsf{A}_1^{\mathsf{F}}\big)_{d+1}(d)$. 
Similarly, given a small enough neighbourhood of the point, we need to find an element of it such
that it belongs to the open part of the rightmost space of \cref{k_clsr_ag5} as well. We now descend to
the affine space. Following \Cref{convention}, assume that $(a,b)$ is a nodal point. Since the nodal point
does not lie on the line, it follows that $b \,\neq\, 0$.  Since $H_d$ is a reducible curve, with the line $H_1$
being one of the reducible components, the curve $f$ is given by 
\begin{align*}
f(x,\,y)& \ =\ y \varphi(x,\,y),
\end{align*} 
where $\varphi(x,\,y)$ is a polynomial of degree $d-1$. Write down $\varphi(x,y)$ explicitly as 
\begin{align*}
\varphi(x,\,y)&\ :=\ b_{00} + b_{10} x + b_{01} y + \cdots +\frac{b_{0, d-1}}{(d-1)!} y^{d-1}. 
\end{align*}
Note that $(a,\,b)$ is a nodal point of $\varphi$. Hence, 
\begin{align}
\varphi(a,\,b)&\,=\, 0, \qquad \varphi_x(a,\,b) \,=\, 0 \qquad \textnormal{and} \qquad \varphi_y(a,\,b)
\,=\,0. \label{varphi_node_ab} 
\end{align}

As before, given an open neighbourhood $U$ of $\left(f,\, (a,\, b),\, a_1,\, \cdots,\, a_{d},\,
a_{d+1}\right)$, we will find an element 
\begin{align*}
\Big(F,\, (A,\, B),\, A_1,\, \cdots,\, A_{d},\, A_{d+1}\Big) \,\in\, 
\Big(\mathsf{A}_1^{\mathsf{F}} \underbrace{\mathsf{T}_{0} \cdots \mathsf{T}_{0}}_{d-\textnormal{times}}
\mathsf{F}\Big)_{\mathsf{Aff}} \cap U. 
\end{align*}
The curve $F$ passes through the points $(A_1,\, 0),\, \cdots,\, (A_{d}, \,0)$. 
Hence, $F$ has to be of the form
\begin{align*}
F(x,y)& \ =\ (x-A_1) \cdots (x-A_{d-1}) (x-A_d) C_0 + y \Phi(x,\,y),
\end{align*}
where $\Phi$ is a polynomial of degree $d-1$. Write down $\Phi$ explicitly as 
\begin{align*} 
\Phi(x,\,y) &\ :=\ B_{00} + B_{10} x + B_{01} y + \cdots + \frac{B_{0, d-1}}{(d-1)!} x^{d-1}.
\end{align*}
Since $U$ is a sufficiently small open neighbourhood of $\left(f,\, (a,\, b),\, a_1,\, \cdots,\,
a_{d},\, a_{d+1}\right)$, it follows that $C_0$ $B_{00}$, $B_{10}$ and $B_{01}$ are small. 
Furthermore, $A_1,\, \cdots,\, A_{d}$ are all distinct. 
We now wish to find a point $(A,\, B)$ close to $(a,\,b)$, such that it 
is a nodal point of the curve defined by the zero set of $F$.
Hence, we need to solve the following set of equations 
\begin{align}
F(A,\, B)  \,=\, 0, \qquad F_x(A,\, B) & \,=\, 0, \qquad \textnormal{and} \qquad F_y(A,\, B) \,=
\,0, \nonumber \\   
& \textnormal{where $(A,\,B)$ is close to $(a,\,b)$}.   
\label{ch_step3_node_eqn_ag5}
\end{align}   
Rewrite $F(x,\,y)$ as follows: 
\begin{align} 
F(x,\,y)& \,=\, Q(x) C_0 + y (B_{00} + B_{10} x + B_{01}y + \alpha(x) x^2 + \beta(y)y^2 + \gamma(x,\,
y) x y ),  \nonumber \\ 
\textnormal{where \, }& Q(x)\,:=\, (x-A_1) \cdots (x-A_{d}),  \nonumber \\  
 & \alpha(x) \,:=\, \frac{B_{20}}{2} + \frac{B_{30} x}{6} + \cdots ,  \nonumber \\ 
& \beta(y)\,:=\, \frac{B_{02}}{2} + \frac{B_{03} y}{6} + \cdots, \nonumber \\ 
& \gamma(x,\,y)\,:=\, B_{11} + \frac{B_{21}}{2} x + \frac{B_{12}}{2} y + \cdots.  \label{F_rewrite_ag5}
\end{align}
Using \cref{ch_step3_node_eqn_ag5,F_rewrite_ag5}, 
solve for $B_{00}$, $B_{10}$ and $B_{01}$ to conclude that 
\begin{align}
B_{00}\, =\,& A^2 \alpha(A) + B^2 \beta(B) + AB \gamma(A,\,B)-\frac{2C_0 Q(A)}{B} \nonumber \\ 
       & +A^3 \alpha^{\prime}(A) + B^3 \beta^{\prime}(B) + \frac{A C_0 Q^{\prime}(A)}{B}   \nonumber \\ 
       & + AB^2 \gamma_{y}(A,B) + A^2 B \gamma_{x}(A,\,B), \nonumber \\  
B_{10} \,=\,&  -2A\alpha(A) -B\gamma(A,\,B) -A^2 \alpha^{\prime}(A)-\frac{C_0 Q^{\prime}(A)}{B}
-AB\gamma_{x}(A,\,B), \quad \textnormal{and} \nonumber \\ 
B_{01}\,=\,&  -2B\beta(B) -A\gamma(A,B)+\frac{C_0 Q(A)}{B^2}-B^2 \beta^{\prime}(B)-
AB\gamma_y(A,\,B). \label{sol_ch_step3_ag5j}
\end{align}
Note that $B\,\neq\, 0$ was needed to solve the equations. Substituting $C_0\,=\,0$ in the expression 
for $B_{00}, B_{10}$ and $B_{01}$, we precisely get the expression for $b_{00}, b_{10}$ and $b_{01}$, with 
$b_{ij}$ replaced by $B_{ij}$. This follows from the fact that $(a,\,b)$ is a nodal point of $\varphi$ 
(i.e., by using \cref{varphi_node_ab}). Hence, the $B_{00}, B_{10}$ and $B_{01}$ 
we have constructed are indeed close to $b_{00}, b_{10}$ and $b_{01}$.    
This proves \cref{k_clsr_ag5}. 

Next, let us compute the multiplicity of the intersection. Note that \textit{every} curve close to 
$f$ is given by \cref{sol_ch_step3_ag5j}. For intersection multiplicity here, we need to    
compute $F(a_{d+1}, \,0)$, where $a_{d+1}$ is distinct from 
$a_1,\, \cdots,\, a_{d}$. Using \cref{F_rewrite_ag5}, it follows that 
\begin{align}
F(a_{d+1}, 0) & \,=\, Q(a_{d+1}) C_0 \nonumber \\ 
              & =\, Q(a_{d+1}) C_0  + O(C_0^2). \label{mult_CH_step3_ag5}
\end{align} 
Since $Q(a_{d+1}) \,\neq\, 0$, 
the required intersection multiplicity is $1$ as follows from \cref{mult_CH_step3_ag5}.

The third term on the right-hand side of \cref{many_Tks_A1F_pp_Break_Fin} is now justified. This completes
the proof of \Cref{theorem_for_many_Tks_A1F_pp_Break_Fin}. 
\end{proof}

\subsection{Numerical invariants: enumerating curves with a free node}\label{enumerative applications}

Using the theorems proved above \cref{CH_degen_arg,na1_sm,na1_d_pts} will be derived.

Using \cref{diag_inc_std}, \Cref{step1_CH_p} can be rewritten as
\begin{align} 
\big[\mathsf{A}_1^{\mathsf{F}} \mathsf{F}\big]\cdot(y_1 + a_1)\cdot (y_d + d a_d) & \,=
\,\, \big[\mathsf{A}_1^{\mathsf{F}}\mathsf{T}_0\big]. 
\label{CH_Step1_formula}
\end{align}
Given a positive integer $t$, define the following cycle 
\begin{align}\label{mu_t}
\mu_t\,\,:= \,\,y_1^2 y_d^{\delta_d-(t+1)} a_1.
\end{align}
Now intersect both sides of \cref{CH_Step1_formula} with $\mu_t$ with $t=1$. Using the fact that $y_1^3 \,=\,0$ and $a_1^3\,=\,0$, the equality simplifies to
\begin{align} 
\big[\mathsf{A}_1^{\mathsf{F}} \mathsf{F}\big]\cdot y_1^2 y_d^{\delta_d-1} a_1^2 & \,=\,\, 
\big[\mathsf{A}_1^{\mathsf{F}}\mathsf{T}_0\big] \cdot y_1^2 y_d^{\delta_d-2} a_1. \label{CH_Step1_formula_num_simp}
\end{align}
The left-hand side of \cref{CH_Step1_formula_num_simp} is an intersection
number inside $\mathsf{M}^1_1$. We are counting a line, a degree $d$ curve and two points $p$ and $x_1$, such that the point $p$ 
is a nodal point of the curve and the point $x_1$ is free (it need not lie on the line or the cubic). Intersecting with 
$y_1^2$ fixes the line. Intersecting with $a_1^2$ fixes the free point. Intersecting with $y_d^{\delta_d-1}$
gives a nodal curve of degree $d$. Hence, the left-hand side of
\cref{CH_Step1_formula_num_simp} is precisely equal to $\mathsf{N}_d(\mathsf{A}_1)$.

Consider the right-hand side of \cref{CH_Step1_formula_num_simp}. Intersecting with $y_1^2$ 
fixes the line. However, now we are intersecting with $a_1$. This fixes the location of the point $x_1$ to be 
on the line. Finally, intersecting with $y_d^{\delta_d-2}$ gives the number of nodal degree $d$ curves 
passing through $\delta_d-2$ generic points. Hence, the number on the right-hand side is precisely equal to 
$\mathsf{N}_d(\mathsf{A}_1, 1)$. Consequently, \cref{CH_Step1_formula_num_simp} is exactly the same as the 
first equality of \cref{CH_degen_arg}.

As before, \Cref{theorem_for_many_Tks_A1F_pp} can be rewritten as follows
\begin{align}\label{CH_Step_n_formula}
\big[\mathsf{A}_1^{\mathsf{F}} \underbrace{\mathsf{T}_{0} \ldots \mathsf{T}_{0}}_{n-\textnormal{times}}\mathsf{F}\big]\cdot 
\big(y_1 + a_{n+1}\big)\cdot \big(y_d + d a_{n+1}\big) \,\,&= \,\,
\big[\mathsf{A}_1^{\mathsf{F}} \underbrace{\mathsf{T}_{0}\ldots \mathsf{T}_{0}}_{(n+1)-\textnormal{times}}\big]\nonumber \\ 
& + \sum_{i=1}^{n} \big[\mathsf{A}_1^{\mathsf{F}}
\underbrace{\mathsf{T}_{0}\ldots \mathsf{T}_{0}}_{n-\textnormal{times}} \mathsf{F}\big] \cdot \big(a_i^2 + a_i a_{n+1} + a_{n+1}^2\big).
\end{align}
Now intersect both sides of \cref{CH_Step_n_formula} with $\mu_{n+1}$ (cf. \cref{mu_t}) and simplify. This yields
\begin{align} 
\big[\mathsf{A}_1^{\mathsf{F}} \underbrace{\mathsf{T}_{0} \ldots \mathsf{T}_{0}}_{n-\textnormal{times}}\mathsf{F}\big] 
\cdot y_1^2 y_d^{\delta_d-(n+1)}\cdot (a_1 \ldots a_n) \cdot a_{n+1}^2 & = 
\big[\mathsf{A}_1^{\mathsf{F}} \underbrace{\mathsf{T}_{0} \ldots \mathsf{T}_{0}}_{(n+1)-\textnormal{times}}\big]
\cdot y_1^2 y_d^{\delta_d-(n+2)}\cdot (a_1 \ldots a_{n+1}). 
\label{CH_Step_n_formula_num_simp}
\end{align}
Consider the left-hand side of \cref{CH_Step_n_formula_num_simp}. 
Recall that this is an intersection 
number inside $\mathsf{M}^1_{n+1}$. We are counting a line, a degree $d$ curve and $(n+2)$ points, namely
$p$, $x_1,\, x_2,\, \cdots,\, x_{n+1}$, such that the point $p$ is a nodal point of the curve, the points
$x_1$ to $x_n$ are transverse points of intersection of the line and the curve 
while the point $x_{n+1}$ is free.

Note that intersecting with 
$y_1^2$ fixes the line, and intersecting with $a_{n+1}^2$ fixes the free point. 
Intersecting with $y_d^{\delta_d-(n+1)}$ makes the nodal curve pass through
$\delta_d-(n+1)$ generic point. Finally, intersecting with $(a_1 \ldots a_n)$ fixes the location of the points 
$x_1, \cdots x,\,_n$ on the line. Hence, the left-hand side of \cref{CH_Step_n_formula_num_simp}
is precisely equal to $\mathsf{N}_d(\mathsf{A}_1, n)$,
the number of nodal degree $d$ curves passing through $\delta_d-(n+1)$
generic points and $n$ generic points on the line. Similarly, the right-hand side of \cref{CH_Step_n_formula_num_simp} 
is equal to $\mathsf{N}_d(\mathsf{A}_1, n)$. Hence,
\cref{CH_Step_n_formula_num_simp} implies --- as we vary $n$ from $0$ to $d-2$ --- all
the equalities of \cref{CH_degen_arg}.

\Cref{na1_sm} can be obtained using \Cref{theorem_for_many_Tks_A1F_pp_Break}. 
Intersect both sides of \cref{many_Tks_A1F_pp_Break} with $\mu_d$ (cf. \cref{mu_t}). 
After simplifying, this gives us 
\begin{align} 
\big[\mathsf{A}_1^{\mathsf{F}} \underbrace{\mathsf{T}_{0} \ldots
\mathsf{T}_{0}}_{(d-1)-\textnormal{times}}\mathsf{F}\big] 
\cdot y_1^2 y_d^{\delta_d-d}\cdot
(a_1 \ldots a_{d-1}) \cdot a_{d}^2 & \,=\, 
\big[\mathsf{A}_1^{\mathsf{F}} \underbrace{\mathsf{T}_{0} \ldots \mathsf{T}_{0}}_{d-\textnormal{times}}\big]
\cdot y_1^2 y_d^{\delta_d-(d+1)}\cdot
(a_1 \ldots a_{d}) \nonumber \\ 
& + \big[\mathsf{R}_d(d)\big]\cdot y_1^2 y_d^{\delta_d-(d+1)} a_1 a_2 \ldots a_d. 
\label{CH_Step_n_formula_num_simp_agg}
\end{align}
As before, the left-hand side of \cref{CH_Step_n_formula_num_simp_agg} 
is $\mathsf{N}_d(\mathsf{A}_1, d-1)$. Similarly, the first 
term on the right-hand side of 
 \cref{CH_Step_n_formula_num_simp_agg} is 
$\mathsf{N}_d(\mathsf{A}_1, d)$. Now analyze the second term on the right-hand side, namely 
\[ \big[\mathsf{R}_d(d)\big]\cdot y_1^2 y_d^{\delta_d-(d+1)} a_1 a_2 \ldots a_d.\]
As before, intersecting with $y_1^2$ fixes the line. Note that 
\[\delta_{d}-(d+1) = \delta_{d-1}. \]
Hence, intersecting with $y_d^{\delta_{d-1}}$ fixes the degree $(d-1)$ component of the curve 
(the line component has already been fixed, since we have intersected with $y_1^2$). 
However, corresponding to each such degree $(d-1)$ curve, there are $ (d-1)$ choices for the nodal point $p$ 
(it is one of the points of intersection of the curve with the line). Hence, this number is precisely equal to 
\[ (d-1)\mathsf{N}_{d-1}(\mathsf{S}). \]
This gives us the degeneration formula of \cref{na1_sm}.

\Cref{na1_d_pts} is obtained by intersecting both sides of \cref{many_Tks_A1F_pp_Break_Fin} with $\mu_{d+1}$ (cf. \cref{mu_t}).
After simplifying, this yields the following:
\begin{align} 
\big[\mathsf{A}_1^{\mathsf{F}} \underbrace{\mathsf{T}_{0} \ldots \mathsf{T}_{0}}_{d-\textnormal{times}}\mathsf{F}\big] 
\cdot y_1^2 y_d^{\delta_d-(d+1)}\cdot
(a_1 \ldots a_{d}) \cdot a_{d+1}^2 & = 
\sum_{i=1}^d \big[\mathsf{R}^i_{d+1}(d)\big] \cdot \mu_4 \nonumber \\ 
& + \big[\big(\mathsf{R}\mathsf{A}_1^{\mathsf{F}}\big)_{d+1}(d)\big] \cdot \mu_4 + 
2 \big[\big(\mathsf{R}\mathsf{T}_1\big)_{d+1}(d)\big] \cdot \mu_4.\label{CH_Step_n_formula_num_simp_agg_pk}
\end{align}
As before, the left-hand side of \cref{CH_Step_n_formula_num_simp_agg_pk} 
is simply $\mathsf{N}_d(\mathsf{A}_1, d)$. Now analyze the right-hand side of
\cref{CH_Step_n_formula_num_simp_agg_pk}. After simplification it is obtained that the first term on the 
right-hand side is 
\begin{align*}
\sum_{i=1}^d \big[\mathsf{R}^i_{d+1}(d)\big] \cdot \mu_4 & \,=\, d \mathsf{N}_{d-1}(\mathsf{S}). 
\end{align*} 
Next, after simplification, it is obtained that the second term on the right-hand side is equal to 
\begin{align*} 
\big[\big(\mathsf{R}\mathsf{A}_1^{\mathsf{F}}\big)_{d+1}(d)\big] \cdot \mu_4 & \,=\, \mathsf{N}_{d-1}(\mathsf{A}_1). 
\end{align*}
Finally, note that 
\begin{align*} 
\big[\big(\mathsf{R}\mathsf{T}_1\big)_{d+1}(d)\big] \cdot \mu_4 & = \mathsf{N}_{d-1}(\mathsf{T}_1). 
\end{align*}
Combining these facts, we obtain the degeneration formula 
given by \cref{na1_d_pts}. \\

\subsection{Numerical invariants: enumerating curves with the node lying on a line}

Now we will prove \cref{mr_11} (first part of \Cref{mr1}). It is enough to establish the equalities of 
\cref{CH_degen_arg_L,na1_sm_L,na1_d_pts_L}.

Given an integer $t\geq 0$, define the following cycle:
\begin{align*}
\mu_t^{\mathsf{L}}\ :=\ y_1^2 y_d^{\delta_d-(t+2)} b_1 a_1 a_2 \cdots a_n a_{n+1}.
\end{align*}
Intersecting both sides of \cref{CH_Step_n_formula} with $\mu_{n+1}^{\mathsf{L}}$, and a similar analysis
as above, produce \cref{CH_degen_arg_L}.\\
\hf In order to get \cref{na1_sm_L}, 
intersect both sides of \cref{many_Tks_A1F_pp_Break} with $\mu_d^{\mathsf{L}}$; that will give 
\cref{na1_sm_L}. 

Similarly, to get \cref{na1_d_pts_L}, intersect both sides of \cref{many_Tks_A1F_pp_Break_Fin} with
$\mu_{d+1}^{\mathsf{L}}$. This produces the required equality.

\section{Extension of Caporaso-Harris to one cuspidal curves} 
\label{CH_cusp_general}

We will now extend the degeneration arguments presented in \Cref{CH_one_node_review} 
and solve the following problem: 
\begin{center}
{\it How many degree $d$ curves are there in $\mathbb{P}^2$ that have a cusp and pass through $\delta_d-2$ 
generic points?}
\end{center}
Let $\mathsf{N}_d(\mathsf{A}_2, k)$ denote the number of cuspidal degree $d$ 
curves, passing through $\delta_d-2-k$ generic points and 
$k$ points on a line. When $k\,=\,0$, we abbreviate and omit writing the $k$, i.e., 
$\mathsf{N}_d(\mathsf{A}_2)\,:=\, \mathsf{N}_d(\mathsf{A}_2,0)$. 
We will show that 
\begin{align}
\mathsf{N}_d(\mathsf{A}_2) & \,=\, \mathsf{N}_d(\mathsf{A}_2,\, 1), \nonumber \\ 
\mathsf{N}_d(\mathsf{A}_2, 1) & \,=\, \mathsf{N}_d(\mathsf{A}_2,\, 2), \nonumber \\ 
\ldots & \nonumber \\ 
\mathsf{N}_d(\mathsf{A}_2, d-2) & \,=\, \mathsf{N}_d(\mathsf{A}_2,\, d-1). \label{CH_degen_arg_cusp}
\end{align}
In other words, 
\begin{align} 
\mathsf{N}_d(\mathsf{A}_2) & \,=\, \mathsf{N}_d(\mathsf{A}_2,\, d-1). \label{na1_d_minus_1_cusp}
\end{align}
Hence, the number of cuspidal degree $d$ curves passing through $\delta_d-2$ generic points is equal to 
the number of cuspidal degree $d$ curves passing through $\delta_d-d-1$ generic points and $(d-1)$ 
collinear points. We further show that
\begin{align}
\mathsf{N}_d(\mathsf{A}_2, d-1) & \,=\, \mathsf{N}_d(\mathsf{A}_2, d) + 3\mathsf{N}_{d-1}(\mathsf{T}_1); \label{na2_sm}
\end{align}
\begin{align}
\mathsf{N}_d(\mathsf{A}_2, d) & \,=\, \mathsf{N}_{d-1}(\mathsf{A}_2) + 
3\mathsf{N}_{d-1}(\mathsf{A}_1^{\mathsf{L}}) + 3d \mathsf{N}_{d-1}(\mathsf{T}_1^{\mathsf{Pt}})
+2 \mathsf{N}_{d-1}(\mathsf{T}_2). 
\label{na2_d_pts}
\end{align}
Equations \eqref{na1_d_minus_1_cusp}, \eqref{na2_sm} and \eqref{na2_d_pts} together imply \cref{mr_12}. 
Let us now prove these assertions. 
As before, certain cycle level statements are proved first; then we intersect them with cycles of
complementary dimensions to get equality of numbers. The cycle level statements are analogs
of \Cref{step1_CH_p}, \Cref{theorem_for_many_Tks_A1F_pp},
\Cref{theorem_for_many_Tks_A1F_pp_Break} and \Cref{theorem_for_many_Tks_A1F_pp_Break_Fin}. Here we consider
cuspidal curves instead of the nodal ones.

In \Cref{CH_one_node_review}, a few subspaces of $\mathsf{M}_{n}^1$ (cf. \cref{definition_A_1T_0^n,definition_A_1T_0^nF,definition_R_n,definition_RT_1,definition_RA_1,definition_R^i_n}) are considered. In all the definitions there $\mathsf{A}_1$ stands for curves having only one node. Here, we consider curves having one cusp only (that is, $\mathsf{A}_2$ singularity). We can consider similar subspaces for cuspidal curves, and replace $A_1$ by $A_2$ in the notation. For example,
\[ \mathsf{A}_2^{\mathsf{F}} \underbrace{\mathsf{T}_{0}\cdots \mathsf{T}_{0}}_{n-\textnormal{times}}\,\,
\subset\, \,\mathsf{M}_{n}^1.\] 
The definitions remain the same, but only the nodal condition will be replaced by the cuspidal one.
 
The following result is analogous to \Cref{step1_CH_p}.
 
\begin{thm} 
\label{step1_CH_p_cusp}
The following equality of homology classes hold in $\mathsf{M}^1_1(d)$:
\begin{align*}
\big[\mathsf{A}_2^{\mathsf{F}} \mathsf{F}\big]\cdot
\big[\big(x_1 \in \mathsf{L}\big)\big] \cdot \big[ \big(x_1 \in \mathcal{C}_d\big) \big] 
&\,= \,\big[\mathsf{A}_2^{\mathsf{F}}\mathsf{T}_0\big], 
\end{align*}
where $d \,\in\, \mathbb{Z}^{\geq 3}$. 
\end{thm}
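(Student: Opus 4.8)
The plan is to follow the proof of \Cref{theorem_for_many_Tks_A1F_pp} in the case $n=0$ (which is precisely the nodal statement \Cref{step1_CH_p}), replacing the node by a cusp everywhere. First I would describe the intersection set-theoretically. The class $\big[\mathsf{A}_2^{\mathsf{F}}\mathsf{F}\big]$ is represented by the closure of the locus of $(H_1,\,H_d,\,x_1)$ with $H_d$ having a cusp at some point $p\notin H_1$ and $x_1$ a free point, while $\big[\big(x_1\in\mathsf{L}\big)\big]\cdot\big[\big(x_1\in\mathcal{C}_d\big)\big]$ is represented by configurations in which $x_1$ lies on both the line $H_1$ and the curve $H_d$. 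Hence a generic point of the set-theoretic intersection consists of a cuspidal curve whose cusp avoids the line, together with a transverse point $x_1$ of $H_1\cap H_d$ --- this is exactly the open part of $\mathsf{A}_2^{\mathsf{F}}\mathsf{T}_0$. The remaining possibilities are: (i) $x_1$ is a point of tangency of $H_1$ and $H_d$; (ii) $x_1$ degenerates onto the cusp $p$, which forces $p\in H_1$; and (iii) $H_1$ is a component of $H_d$, so that $H_d=H_1\cup H_{d-1}$ with the degree $d-1$ curve carrying the cusp off the line.

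Next I would do the dimension bookkeeping, exactly as in \Cref{remark1}. The ambient manifold $\mathsf{M}^1_1$ has dimension $\delta_d+6$, and since the locus of pairs $(H_d,\,p)$ with $H_d$ having a cusp at $p$ has codimension $4$ in $\mathcal{D}_d\times X^1$, both sides of the asserted identity are classes of codimension $4+1+1=6$, that is, of dimension $\delta_d$. I would then check that each of the degenerate loci (i)--(iii) has codimension strictly greater than $6$ and therefore does not contribute to the class: in (i) the tangency imposes one extra condition, raising the codimension to $7$; in (ii) the conditions ``cusp at $p$'', ``$p\in H_1$'' and ``$x_1=p$'' have total codimension $7$; and in (iii) the dimension is $2+(\delta_{d-1}-2)+1=\delta_{d-1}+1=\delta_d-d<\delta_d$. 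This is the analogue of the inequality $b<a$ from \Cref{remark1}, and it shows that set-theoretically the intersection is supported on $\overline{\mathsf{A}_2^{\mathsf{F}}\mathsf{T}_0}$, so that the left-hand side equals $m\,\big[\mathsf{A}_2^{\mathsf{F}}\mathsf{T}_0\big]$ for some multiplicity $m$.

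It then remains to prove $m=1$, which I would do by a transversality computation in affine charts, parallel to the one in \Cref{theorem_for_many_Tks_A1F_pp}. Working inside $\mathcal{F}_d^*\times\mathbb{C}^2$, I would first show that $(\mathsf{A}_2^{\mathsf{F}})_{\mathsf{Aff}}$ is smooth of codimension $4$ by verifying that $0$ is a regular value of the map $(f,\,(x,y))\mapsto\big(f(x,y),\,f_x(x,y),\,f_y(x,y),\,(f_{xx}f_{yy}-f_{xy}^2)(x,y)\big)$; the differential is computed on the curves $\gamma(t)=(f+t\eta,\,(x,y))$ with $\eta\in\{1,\,x,\,y\}$ for the first three coordinates and $\eta=\tfrac12 x^2$ or $\eta=\tfrac12 y^2$ (whichever has nonzero second derivative in the perfect-square $2$-jet of $f$ at the cusp) for the last one. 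Fixing the $\mathtt{x}$--axis as the line and writing the point as $(x_1,0)$, I would then show that $0$ is a regular value of the evaluation map $\varphi(f,\,(x_0,y_0),\,x_1)=f(x_1,0)$ on $(\mathsf{A}_2^{\mathsf{F}})_{\mathsf{Aff}}\times\mathbb{C}$: the perturbation $\eta=(y-y_0)^3$ has vanishing $2$-jet at the cusp $(x_0,y_0)$ (so it preserves the cusp --- its $2$-jet stays a perfect square and its $3$-jet stays inside the open cusp condition) and satisfies $\eta(x_1,0)=-y_0^3\neq 0$ since $y_0\neq 0$. This is where the hypothesis $d\geq 3$ enters: the perturbing polynomial now has degree $3$, whereas in the nodal case degree $2$ sufficed. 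Finally, at a generic point of $\mathsf{A}_2^{\mathsf{F}}\mathsf{T}_0$, putting $x_1$ at the origin makes $f_{00}=0$ and $\varphi(f,\,x)=f(x,0)=f_{10}x+\cdots$ with $f_{10}\neq 0$ (the curve meets the line transversally there), so the evaluation vanishes to order one and $m=1$.

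The step I expect to be the main obstacle is controlling the cusp condition in the transversality argument: one has to be certain that adding $t(y-y_0)^3$ keeps the singularity a genuine $A_2$ and does not accidentally create a tacnode or a worse singularity --- equivalently, that the $A_2$ stratum is open inside the locus cut out by the four equations above, and that its defining conditions on the $2$-jet and $3$-jet are stable under a perturbation with vanishing $2$-jet at the singular point. This smoothness-and-openness analysis of the cuspidal stratum, together with the degree bump in the perturbing polynomial that produces the restriction $d\geq 3$, is the only genuinely new ingredient compared with the nodal case; everything else is a word-for-word transcription of the proof of \Cref{theorem_for_many_Tks_A1F_pp} with $n=0$.
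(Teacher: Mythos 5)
Your proposal is correct and follows essentially the same route as the paper: the paper proves \Cref{step1_CH_p_cusp} as the $n=0$ case of \Cref{theorem_for_many_Tks_A1F_pp_cusp}, whose proof is declared to be the same as the nodal argument of \Cref{theorem_for_many_Tks_A1F_pp} once one knows that $\mathsf{A}_2^{\mathsf{F}}$ is smooth of codimension $4$ (which the paper cites from the literature rather than reproving, while you verify it directly via the regular-value argument for the map $(f,(x,y))\mapsto(f,f_x,f_y,f_{xx}f_{yy}-f_{xy}^2)$). Your dimension bookkeeping, the degree-$3$ perturbation $\eta=(y-y_0)^3$ explaining the hypothesis $d\geq 3$, and the order-one vanishing of the evaluation map all match the paper's template.
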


\Cref{step1_CH_p_cusp} is a special case of the following analog of \Cref{theorem_for_many_Tks_A1F_pp}.

\begin{thm}
\label{theorem_for_many_Tks_A1F_pp_cusp}
Take $d \,\in\, \mathbb{Z}^{\geq 3}$, and let $n$ be an integer such that 
\[0 \,\leq\, n \,\leq\, d-2.\]
Then, on $\mathsf{M}^1_{n+1}(d)$ 
the following equality of homology classes hold:
\begin{align}\label{many_Tks_A1F_pp_cusp}
\big[\mathsf{A}_2^{\mathsf{F}} \underbrace{\mathsf{T}_{0} \cdots
\mathsf{T}_{0}}_{n-\textnormal{times}}\mathsf{F}\big]\cdot 
\big[\big(x_{n+1}\in \mathsf{L}\big)\big]\cdot \big[\big(x_{n+1}\in \mathcal{C}_d\big)\big]&\,\, = \,\,
\big[\mathsf{A}_2^{\mathsf{F}} \underbrace{\mathsf{T}_{0}\cdots \mathsf{T}_{0}}_{(n+1)-\textnormal{times}}\big]\nonumber \\ 
& + \sum_{i=1}^{n} \big[\mathsf{A}_2^{\mathsf{F}}
\underbrace{\mathsf{T}_{0}\cdots \mathsf{T}_{0}}_{n-\textnormal{times}} \mathsf{F}\big] \cdot \big[\big(x_i = x_{n+1}\big)\big].
\end{align}
\end{thm}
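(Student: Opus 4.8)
Following the template of the proof of \Cref{theorem_for_many_Tks_A1F_pp}, the plan is to replace ``node'' by ``cusp'' throughout and to isolate the one place where the cuspidal case genuinely differs. First I would record the set-theoretic picture exactly as in \Cref{remark1}: a point underlying the left-hand side consists of a line $H_1$, a cuspidal curve $H_d$ with cusp $p\notin H_1$, transverse intersection points $x_1,\dots,x_n$ of $H_1$ and $H_d$, and a free point $x_{n+1}$, subject to the extra requirement that $x_{n+1}$ lie on $H_1$ and on $H_d$. Since $H_d$ meets $H_1$ in $d$ points counted with multiplicity, $x_{n+1}$ is one of them, leaving three possibilities: $x_{n+1}$ is a new transverse intersection point distinct from $x_1,\dots,x_n$ (the closure of this configuration is the first term on the right, and its $n=0$ instance already gives \Cref{step1_CH_p_cusp}); $x_{n+1}=x_i$ for some $i\le n$ (the diagonal terms); or the degenerate locus in which the cusp migrates onto $H_1$. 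It then remains to show the third locus does not contribute and that the first two families occur with multiplicity one.

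For the dimension count, note that $\mathsf{M}^1_{n+1}(d)$ has dimension $\delta_d+2n+6$ and that $\mathsf{A}_2^{\mathsf{F}}$ is a codimension-four cycle (the cusp imposes $f=f_x=f_y=0$ together with degeneracy of the Hessian), so both sides of \cref{many_Tks_A1F_pp_cusp} have codimension $4+2(n+1)=2n+6$, hence dimension $\delta_d$. If the cusp lies on $H_1$ while $H_d$ is irreducible --- or reducible without $H_1$ as a component --- then the cusp-on-a-line condition is codimension five and the $n+1$ marked incidences contribute $2(n+1)$, for total codimension $2n+7>2n+6$; this case is excluded exactly as in \Cref{remark1}. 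The one genuinely new point is the stratum $H_d=H_1\cup C_{d-1}$ with $H_1$ a component: here the crude bound $2+\delta_{d-1}+(n+1)$ equals $\delta_d$ precisely at $n=d-2$, so, unlike the nodal case, it is not automatically below the expected dimension. To gain the missing codimension I would invoke upper-semicontinuity of the Milnor number: the marked point $p$, being a limit of cusps, must satisfy $\mu(H_1\cup C_{d-1},p)\ge 2$, which for such a reducible curve forces $C_{d-1}$ to be tangent to $H_1$ or singular --- in every case a codimension-$\ge 1$ constraint. The largest resulting stratum (the tangency one) has dimension at most $2+(\delta_{d-1}-1)+(n+1)=\delta_{d-1}+n+2=\delta_d-(d-1-n)<\delta_d$ for $n\le d-2$, hence cannot meet a cycle of complementary dimension. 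This is exactly where the hypothesis $n\le d-2$ is used: at $n=d-1$ the tangency stratum attains the expected dimension, which is the source of the correction term $3\mathsf{N}_{d-1}(\mathsf{T}_1)$ appearing in \cref{na2_sm}.

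Next I would establish transversality by descending to affine charts and showing inductively, as in \Cref{theorem_for_many_Tks_A1F_pp}, that $\bigl(\mathsf{A}_2^{\mathsf{F}}\mathsf{T}_{0}\cdots\mathsf{T}_{0}\bigr)_{\mathsf{Aff}}$ is smooth of the expected codimension. Smoothness of $(\mathsf{A}_2^{\mathsf{F}})_{\mathsf{Aff}}$ follows by checking that $0$ is a regular value of $(f,(x,y))\longmapsto\bigl(f,\,f_x,\,f_y,\,f_{xx}f_{yy}-f_{xy}^2\bigr)$ at $(x,y)$, using the test directions $1,x,y$ for the first three slots and $(y-y_0)^2$, which annihilates those slots while moving the Hessian determinant, for the fourth. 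When the incidence conditions $x_i\in H_1\cap H_d$ are imposed one at a time, the test polynomial must additionally preserve the cusp; writing $\ell$ for the cuspidal tangent line, the polynomial $\eta=\ell^2(x-x_1)\cdots(x-x_n)$ has $2$-jet proportional to that of $f$ at the cusp (so the $A_2$ persists for small $t$), vanishes at $x_1,\dots,x_n$, and is nonzero at a generic new point $x_{n+1}$; its degree $n+2$ must be at most $d$, which is precisely the hypothesis $n\le d-2$. For the codimension-one family of configurations in which $\ell$ meets $H_1$ at $x_{n+1}$, one replaces this $\eta$ by a perturbation with the same $2$-jet at the cusp and the same vanishing at $x_1,\dots,x_n$ but not divisible by $\ell$; this costs only finitely many further linear conditions and remains possible because $n\le d-2$ is far below $\dim\mathcal{F}_d$.

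Finally, the multiplicity computation is identical to the nodal one: both the first term and the diagonal terms are local questions near the transverse points $x_1,\dots,x_{n+1}$, where the cusp, sitting off $H_1$, plays no role, so the evaluation $f\mapsto f(x_{n+1},0)$ vanishes to order one exactly as in \Cref{theorem_for_many_Tks_A1F_pp}, giving multiplicity one throughout; assembling the pieces yields \cref{many_Tks_A1F_pp_cusp}. I expect the main obstacles to be, first, the transversality for the $\mathsf{T}_{0}$-strata --- producing, within degree $d$, a deformation that simultaneously fixes the cuspidal $2$-jet, fixes the prescribed transverse intersections, and moves the evaluation at the new point --- and, second, the Milnor-number dimension estimate for the reducible stratum, which is the one ingredient with no analogue in the nodal argument.
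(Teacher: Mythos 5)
Your proposal is correct and follows essentially the same route as the paper: the paper likewise reduces everything to the nodal argument of \Cref{theorem_for_many_Tks_A1F_pp}, notes that $\mathsf{A}_2^{\mathsf{F}}$ has codimension four (citing a reference rather than the regular-value computation you sketch), and disposes of the borderline reducible stratum at $n=d-2$ by observing that a cusp cannot degenerate to a node (a transverse intersection of $C_{d-1}$ with $H_1$) but only to a tacnode-type point, which is exactly your Milnor-number semicontinuity argument. The only difference is one of detail: you spell out the modified transversality perturbation $\eta=\ell^2(x-x_1)\cdots(x-x_n)$ adapted to the cuspidal $2$-jet, which the paper leaves implicit under ``the rest of the proof is the same.''
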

\begin{rem}
\Cref{CH_degen_arg_cusp} can be obtained from \cref{many_Tks_A1F_pp_cusp} by intersecting both sides with 
\[\mu_{n+1}:= y_1^2 y_d^{\delta_d-(n+3)} a_1 a_2 \cdots a_n a_{n+1}.\] 
The reasoning is identical to how \cref{many_Tks_A1F_pp} implies \cref{CH_degen_arg} described in \Cref{enumerative applications}.
\end{rem}

\noindent\begin{proof}[{\textbf{Proof of Theorem \ref{theorem_for_many_Tks_A1F_pp_cusp}}}]
The proof is almost the same as 
the proof of \Cref{theorem_for_many_Tks_A1F_pp}. 
First, we need to show that if $d\,\geq\, 2$, then $\mathsf{A}_2^{\mathsf{F}}$
is a smooth manifold of codimension $4$. This has been proved in
\cite[p.~22, Proposition 6.1]{Tang_IBRMAPAC}. 

The rest of the proof is the same if $n \,\leq\, d-3$. When $n\,=\,d-2$, there is a small
point we need to observe. 
By a dimension counting argument, we can't rule out a reducible curve, one of whose component is the line. 
However, when we intersect with generic constraints, we will require the degree $d-1$ curve to intersect the 
line transversally. This is not possible for a simple geometric reason; a cuspidal point of the degree $d$ 
curve can't become a nodal point (it can become a tacnode, but not a node). Hence, the reducible configuration 
is not possible, even when $n\,=\,d-2$. 
\end{proof}

\begin{thm}
\label{theorem_for_many_Tks_A1F_pp_Break_cusp}
Let $d \,\in \,\mathbb{Z}^{\geq 3}$. The following equality of homology classes 
holds in $\mathsf{M}_{{d}}^1(d)$:
\begin{align}\label{many_Tks_A1F_pp_Break_cusp}
\big[\mathsf{A}_2^{\mathsf{F}} \underbrace{\mathsf{T}_{0} \cdots
\mathsf{T}_{0}}_{(d-1)-\textnormal{times}} \mathsf{F}\big]\cdot 
\big[\big(x_d \in \mathsf{L}\big)\big]\cdot \big[\big(x_d \in \mathcal{C}_d\big)\big]&\,\, = \,\,
\big[\mathsf{A}_2^{\mathsf{F}} \underbrace{\mathsf{T}_{0}\cdots \mathsf{T}_{0}}_{d-\textnormal{times}}\big] \nonumber \\ 
&+ \sum_{i=1}^{d-1}\big[\mathsf{A}_2^{\mathsf{F}}
\underbrace{\mathsf{T}_{0}\cdots \mathsf{T}_{0}}_{(d-1)-\textnormal{times}}\mathsf{F}\big] \cdot \big[\big(x_i = x_d\big)\big] 
\nonumber \\
& + 3 \big[\big(\mathsf{R}\mathsf{T}_1\big)_d(d)\big].
\end{align}
\end{thm}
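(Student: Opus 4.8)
The plan is to follow the template of the proof of \Cref{theorem_for_many_Tks_A1F_pp_Break}, with the cusp condition replacing the node condition, and to track carefully what the degenerate component looks like when the cuspidal point $p$ approaches the line. First I would recall the set-theoretic picture: the left-hand side is the closure of the locus where $x_d$ is forced onto both the line $H_1$ and the curve $\mathcal{C}_d$, while the cusp $p$ stays off the line and $x_1,\dots,x_{d-1}$ are transverse intersection points on $H_1$. As $x_d$ specializes, either (i) it becomes distinct from all $x_i$ and equal to an honest extra transverse intersection point (giving the first term on the right), or (ii) $x_d$ collides with some $x_i$, $i \le d-1$ (giving the sum in the second term), or (iii) the cuspidal point $p$ itself is dragged onto $H_1$. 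In case (iii) the curve intersects $H_1$ at the $d-1$ points $x_1,\dots,x_{d-1}$ \emph{plus} at $p$; since a cusp cannot degenerate into a transverse intersection, $p$ must be a point where $H_1$ is tangent to $\mathcal{C}_d$, and by a dimension count (identical to \cref{step_d_dim_red}, now with $n=d-1$) the only configuration of the right dimension forces $\mathcal{C}_d$ to \emph{contain} $H_1$ as a component; the residual degree $d-1$ curve is then smooth and tangent to $H_1$ at $p$ to first order, i.e. the configuration $(\mathsf{R}\mathsf{T}_1)_d(d)$. This explains why the third term is $(\mathsf{R}\mathsf{T}_1)_d(d)$ and, crucially, why no $(\mathsf{R}\mathsf{A}_1^{\mathsf{F}})$-type or ``$p = x_i$'' term appears: a cusp off the line cannot sit on a line-component, and a cusp cannot land on a transverse intersection point.

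The technical heart is the same three ingredients as before. First, smoothness and expected dimension of the open strata: $\mathsf{A}_2^{\mathsf{F}}$ is smooth of codimension $4$ in $\mathsf{M}^1_0$ by \cite[p.~22, Proposition 6.1]{Tang_IBRMAPAC}, and one checks as in \Cref{theorem_for_many_Tks_A1F_pp} that adding the transverse incidence conditions $\mathsf{T}_0$ keeps things smooth of expected dimension, using explicit perturbation polynomials $\eta$ — here one must produce an $\eta$ preserving the cusp at $(x_0,y_0)$ (so $\eta$ and its full $2$-jet vanish appropriately at the cusp, a codimension-$4$ constraint on $2$-jets) while being nonzero at the new marked point; such an $\eta$ exists provided $d$ is large enough relative to $d-1$, which is exactly the hypothesis $d \ge 3$. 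Second, the closure statement $\overline{(\mathsf{R}\mathsf{T}_1)_d(d)} \subset \overline{\mathsf{A}_2^{\mathsf{F}} \mathsf{T}_0 \cdots \mathsf{T}_0 \mathsf{F}}$: working in affine coordinates with $H_1$ the $\mathtt{x}$-axis and $p$ the origin, write $f = y\varphi(x,y)$ with $\varphi$ smooth at the origin, $\varphi(0,0)=\varphi_x(0,0)=0$, $\varphi_y(0,0)\neq0$, $\varphi_{xx}(0,0)\neq0$ (first-order tangency), and deform to $F(x,y) = (x-A_1)\cdots(x-A_d)C_0 + y\Phi(x,y)$; imposing that $F$ have a \emph{cusp} at some small $(s,t)$ with $t\neq0$ is now \emph{four} scalar equations ($F = F_x = F_y = 0$ plus the degenerate-Hessian condition $F_{xx}F_{yy} - F_{xy}^2 = 0$), which one solves for $C_0$ and three of the leading coefficients of $\Phi$ by the implicit function theorem, exactly as in \cref{sol_ch_step3}. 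Third, the multiplicity: evaluate $F(a_d,0)$ along this family and read off the order of vanishing in the deformation parameter.

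I expect the multiplicity computation in case (iii) to be the main obstacle and the reason the coefficient is $3$ rather than $2$. Heuristically, a cusp colliding with a transverse line meets it with local intersection multiplicity $3$ (the line through a cusp's tangent direction, or any generic line near it, meets the cuspidal branch with multiplicity $3$), whereas in the nodal case the analogous collision contributed $2$; so $F(a_d,0)$ should vanish to order $3$ in the parameter $t$ that measures how far the cusp is from the line, and since the computation in \cref{CH_Step_n_formula_num_simp_agg}-type intersections compares this against the codimension count, one extracts the coefficient $3$. Making this rigorous requires writing out the leading term of $F(a_d,0)$ from the explicit solutions analogous to \cref{sol_ch_step3}, checking that the coefficient of $t^3$ is a nonzero product of the nonvanishing quantities $Q(a_d)$, $Q(0)^{-1}$, $a_d$, and the relevant $\varphi$-coefficients, and that the lower-order terms genuinely cancel because of the cusp equations — this cancellation is the delicate point and I would verify it by substituting $C_0 = 0$ into the solved coefficients and checking they reduce to the cusp conditions on $\varphi$, as is done at the end of the proof of \Cref{theorem_for_many_Tks_A1F_pp_Break_Fin}. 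Once the three terms and their multiplicities ($1$, $1$, $3$) are established and the spurious configurations are excluded by the dimension count, \cref{many_Tks_A1F_pp_Break_cusp} follows.
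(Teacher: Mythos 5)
Your proposal follows essentially the same route as the paper's proof: the same case decomposition (with the correct observation that a cusp forced onto the line can only limit to a tacnodal configuration, yielding $\big(\mathsf{R}\mathsf{T}_1\big)_d(d)$ and nothing else), the same explicit affine deformation solving the four cusp equations $F=F_x=F_y=F_{xx}F_{yy}-F_{xy}^2=0$ by the implicit function theorem, and the same multiplicity-$3$ computation via the order of vanishing of $F(a_d,0)$. The only discrepancy is bookkeeping: since only $d-1$ marked points lie on the line at this stage, the correct ansatz is $F=(x-A_1)\cdots(x-A_{d-1})(C_0+C_1x)+y\Phi(x,y)$ rather than $(x-A_1)\cdots(x-A_d)C_0$; the paper then solves the first three equations for $C_0$, $C_1$, $B_{00}$, solves the Hessian equation for $t$ in terms of $B_{10}$, and finds $F(a_d,0)=-\tfrac{Q(a_d)\,a_d}{4\alpha(0)\beta(0,0)Q(0)}B_{10}^3+O(B_{10}^4)$, exactly the cubic vanishing you predict.
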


\begin{rem}
As before, intersecting both sides of 
\cref{many_Tks_A1F_pp_Break_cusp} with 
\[\mu_{d}\ :=\ y_1^2 y_d^{\delta_d-(d+2)} a_1 a_2 \cdots a_d,\] 
\cref{na2_sm} can be obtained.
\end{rem}

\begin{proof}[{\textbf{Proof of \Cref{theorem_for_many_Tks_A1F_pp_Break_cusp}}}] 
The first and second terms on the right-hand side of \cref{many_Tks_A1F_pp_Break_cusp} 
are justified as before. The third term needs to be justified.
We continue with the set-up of the proof of Theorem 
\ref{theorem_for_many_Tks_A1F_pp_Break}. Following the same thread of reasoning, it suffices to show the
following:
\begin{align}
\big(\mathsf{R}\mathsf{T}_1\big)_d(d) & 
\ \subset\
\overline{\mathsf{A}_2^{\mathsf{F}} \underbrace{\mathsf{T}_{0} \ldots
\mathsf{T}_{0}}_{(d-1)-\textnormal{times}} \mathsf{F}}. \label{k_clsr77}
\end{align}

Let $\left(H_1,\, H_d,\, p,\, x_1,\, \cdots,\, x_{d-1},\, x_d\right)$ be an arbitrary point of 
$\big(\mathsf{R}\mathsf{T}_1\big)_d(d)$. It will be shown that every open neighbourhood of 
$(H_1, H_d, x_1, \ldots, x_{d-1}, x_d)$
intersects 
\[\mathsf{A}_2^{\mathsf{F}} \underbrace{\mathsf{T}_{0} \cdots \mathsf{T}_{0}}_{(d-1)-\textnormal{times}} \mathsf{F}.\]
Switching to an affine space, assume $H_1$ to be the $\mathtt{x}$--axis, and the point of tangency $p$ is
the origin. Furthermore, assume that $x_1,\, x_2,\, \cdots,\, x_{d-1}$ are given by
$(a_1,\, 0)$, $(a_2,\, 0),\, \cdots,\, (a_{d-1},\,0)$. Since these points are all distinct and also not
equal to $p$, it follows that $a_1,\, a_2,\, \cdots,\, a_{d-1}$ and $0$ are all distinct.

Note that $H_1$ being a component, the reducible curve $H_d$ is given by a polynomial of the form
\begin{align*}
f(x,\,y)&\ =\ y \varphi(x,\,y),
\end{align*} 
where $\varphi(x,\,y)$ is a polynomial of degree $d-1$. 
Let us write down $\varphi(x,\,y)$ explicitly as 
\begin{align}
\varphi(x,y)\,\, &:=\,\, b_{00} + b_{10} x + b_{01} y + \cdots +\frac{b_{0, d-1}}{(d-1)!} y^{d-1}. \nonumber
\end{align}
Recall that the degree $d-1$ component is tangent of first order to the line at the origin, and the origin is a smooth
point of the degree $d-1$ component. Hence, $b_{00} \,=\,0$, $b_{10}\,=\,0$; but $b_{20}
\,\neq\, 0$ and $b_{01} \,\neq\, 0$.
Note that 
\begin{align} 
\Big(f,\, (0,\,0),\, a_1,\, \cdots,\, a_{d-1},\, a_d\Big) & \,\in\, 
\Big(\big(\mathsf{R}\mathsf{T}_1\big)_d(d)\Big)_{\mathsf{Aff}}
\,\subset\, \mathcal{F}_d^* \times \mathbb{C}^2 \times \mathbb{C}^d. \nonumber
\end{align}
Given any sufficiently open neighbourhood $U$ of $\left(f,\, (0,\,0),\, a_1,\, \cdots,\,
a_{d-1},\, a_d\right)$, we have to show the existence of an element
\begin{align*}
\Big(F,\, (s,\,t),\, A_2,\, \cdots, \,A_{d-1},\, A_d\Big) \,\in\, 
\Big(\mathsf{A}_2^{\mathsf{F}} \underbrace{\mathsf{T}_{0} \cdots \mathsf{T}_{0}}_{(d-1)-\textnormal{times}} \mathsf{F}\Big)_{\mathsf{Aff}} \cap U. 
\end{align*}

The curve $F$, if it exists, 
passes through the points $(A_1,\, 0),\, \cdots,\, (A_{d-1},\, 0)$. Hence, it
has to be of the form 
\begin{align*}
F(x,\,y)\,& =\, Q(x)(C_0 + C_1 x)+ y(B_{00} + B_{10} x + x^2 \alpha(x) + y \beta(x,\,y)), \qquad \textnormal{where} \\ 
Q(x)\,&:=\, (x-A_1)(x-A_2) \cdots (x-A_{d-1}), \nonumber \\ 
\alpha(x)\, &:=\, \frac{B_{20}}{2} x^2 + \frac{B_{30}}{6} x^3 + \cdots \qquad \textnormal{and} 
\qquad \beta(x,\,y)\,:=\, B_{01} + B_{11} x + \cdots. 
\end{align*}
We note that $C_{0},\, C_1,\, B_{00}$ and $B_{10}$ are small. 
Furthermore, $\alpha(0) \,=\, \frac{B_{20}}{2}$ and $\beta(0,\,0)\, =\, B_{11}$ 
are non-zero. Also, $F$ satisfies the following system of equations:
\begin{align*}
F(s,\,t) & \,=\, 0, \qquad F_x(s,\,t) \,=\, 0, \qquad F_y(s,\,t) \,= \,0, \qquad \textnormal{and} 
\qquad (F_{xx} F_{yy} - F_{xy}^2)(s,\,t) \,=\, 0.
\end{align*} 
Using the first three equations, we can solve for $C_{0},\, C_{1}$ and $B_{00}$. 
Plugging that into the last equation, we can solve for $t$ in terms of $B_{10}$ using the 
implicit function theorem. Therefore, such an $F$ exists. This proves \cref{k_clsr77}. 

Now the multiplicity of the intersection will be computed. A straightforward calculation yields 
\begin{align*}
F(a_d,\, 0)\,& =\, -\frac{Q(a_d) a_d}{4 \alpha(0) \beta(0,\,0) Q(0)} B_{10} ^3 + O(B_{10}^4).
\end{align*}
Since the coefficient of $B_{10}$ is non-zero, it follows that 
the multiplicity of the intersection is $3$. This completes the proof of 
\Cref{theorem_for_many_Tks_A1F_pp_Break_cusp}.
\end{proof}

\begin{thm}
\label{theorem_for_many_Tks_A1F_pp_Break_Fin_cusp}
Let $d\,\in\, \mathbb{Z}^{\geq 3}$. The following equality of homology classes
in $\mathsf{M}_{{d+1}}^1(d)$ holds: 
\begin{align}\label{many_Tks_A1F_pp_Break_Fin_cusp}
\big[\mathsf{A}_2^{\mathsf{F}} \underbrace{\mathsf{T}_{0} \cdots
\mathsf{T}_{0}}_{d-\textnormal{times}}\mathsf{F}\big]\cdot 
\big[\big(x_{d+1} \in \mathsf{L}\big)\big]\cdot \big[\big(x_{d+1} \in \mathcal{C}_d\big)\big] &\,\,= \,\,
\sum_{i=1}^{d}\big[\mathsf{A}_2^{\mathsf{F}}
\underbrace{\mathsf{T}_{0}\cdots \mathsf{T}_{0}}_{d-\textnormal{times}} \mathsf{F}\big] \cdot \big[\big(x_i = x_{d+1}\big)\big] \nonumber \\
& + \sum_{i=1}^d 3 \big[\big(\mathsf{R}\mathsf{T}_1 \big)_{d+1}^i(d)\big] + \big[\big(\mathsf{R}\mathsf{A}_2^{\mathsf{F}}\big)_{d+1}(d)\big] 
\nonumber \\ 
& +3 \big[\big(\mathsf{R}\mathsf{A}_1^{\mathsf{L}}\big)_{d+1}(d)\big] 
+ 2 \big[\big(\mathsf{R}\mathsf{T}_2\big)_{d+1}(d)\big].
\end{align}
\end{thm}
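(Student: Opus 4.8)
The plan is to replay, for the one-cuspidal locus, the argument proving the nodal statement \Cref{theorem_for_many_Tks_A1F_pp_Break_Fin}, together with the refinements introduced in the proof of \Cref{theorem_for_many_Tks_A1F_pp_Break_cusp}. Two ingredients are imported at the outset: that $\mathsf{A}_2^{\mathsf{F}}$ is a smooth submanifold of $\mathsf{M}^1_0$ of codimension $4$ (\cite[Proposition~6.1]{Tang_IBRMAPAC}), so that the open part of each relevant cycle is a smooth manifold of the expected dimension; and the local expansion $F(a_d,0) = c\,B_{10}^3 + O(B_{10}^4)$ with $c \neq 0$ obtained in \Cref{theorem_for_many_Tks_A1F_pp_Break_cusp}, which records that a cusp collapsing onto a first-order point of tangency between the designated line and the residual curve contributes to the intersection with multiplicity $3$. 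As in the nodal case, the left-hand side of \cref{many_Tks_A1F_pp_Break_Fin_cusp} is the class represented by $\overline{\mathsf{A}_2^{\mathsf{F}}\mathsf{T}_0\cdots\mathsf{T}_0\mathsf{F}}$ cut by $\big(x_{d+1}\in\mathsf{L}\big)$ and $\big(x_{d+1}\in\mathcal{C}_d\big)$, and I would begin with the set-theoretic analysis of this intersection. It breaks into the locus where the limiting degree-$d$ curve is irreducible — then it is still cuspidal and meets $H_1$ in exactly the $d$ transverse points $x_1,\ldots,x_d$, so $x_{d+1}$ must collide with one of them, producing the first (diagonal) sum, which is justified exactly as in \Cref{theorem_for_many_Tks_A1F_pp} and \Cref{theorem_for_many_Tks_A1F_pp_Break} — and the locus where the limiting curve degenerates to $H_d = H_1 \cup C'$ with $\deg C' = d-1$, which is the source of the remaining terms.

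Next I would enumerate the reducible boundary loci. Since $H_1\cup C'$ must lie in the closure of the cuspidal stratum and since, with the extra marked point $x_{d+1}$ present, the expected dimension is one larger than in \Cref{theorem_for_many_Tks_A1F_pp_Break_cusp} (compare the dimension count there, where only $\big(\mathsf{R}\mathsf{T}_1\big)_d(d)$ survived), a dimension estimate in the spirit of \Cref{remark1} shows that the loci of the correct dimension are exactly: (i) $C'$ smooth and tangent to $H_1$ to first order at the point $p$ where the cusp has gone, with $p$ equal to the marked point $x_i$, i.e. $\big(\mathsf{R}\mathsf{T}_1\big)^i_{d+1}(d)$; (ii) $C'$ smooth and tangent to $H_1$ to second order at a point distinct from all the $x_j$, i.e. $\big(\mathsf{R}\mathsf{T}_2\big)_{d+1}(d)$; (iii) $C'$ with a node lying on $H_1$ and meeting $H_1$ transversally elsewhere, i.e. $\big(\mathsf{R}\mathsf{A}_1^{\mathsf{L}}\big)_{d+1}(d)$; and (iv) $C'$ with a cusp off $H_1$ and meeting $H_1$ transversally, i.e. $\big(\mathsf{R}\mathsf{A}_2^{\mathsf{F}}\big)_{d+1}(d)$. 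Here (i) and (iv) are the cuspidal counterparts of the nodal boundary terms $\mathsf{R}^i_{d+1}(d)$ and $\big(\mathsf{R}\mathsf{A}_1^{\mathsf{F}}\big)_{d+1}(d)$, while (ii) and (iii) are genuinely new; the reason a first-order tangency is forced to sit at one of the $x_i$, whereas a second-order one is not, is precisely the one-unit gap in codimension between the two tangency conditions.

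For each boundary locus $\mathsf{Z}$ I would then establish the inclusion $\mathsf{Z}\subset\overline{\mathsf{A}_2^{\mathsf{F}}\mathsf{T}_0\cdots\mathsf{T}_0\mathsf{F}}$, and compute the multiplicity with which it enters, by the explicit deformation technique used throughout \Cref{CH_one_node_review}. Adopting \Cref{convention}, take $H_1$ to be the $\mathtt{x}$-axis and write a nearby degree-$d$ curve as $F(x,y) = Q(x)\big(C_0 + \cdots\big) + y\,\Phi(x,y)$, with $Q(x) = \prod_j (x-A_j)$ recording that $F$ passes through the perturbed marked points on the axis; then impose that some point off the axis is a cusp of $F$, namely the three equations $F = F_x = F_y = 0$, solved linearly for three coefficients of $F$, together with the Hessian equation $(F_{xx}F_{yy} - F_{xy}^2) = 0$, solved for the last deformation parameter by the implicit function theorem exactly as in \Cref{theorem_for_many_Tks_A1F_pp_Break_cusp}; and verify that the coefficients so constructed converge to those of the given reducible curve. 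The multiplicity is then the order of vanishing of $F(a_{d+1},0)$ as a function of the deformation parameter. This gives: multiplicity $3$ for (i), reusing the $B_{10}^3$-expansion of \Cref{theorem_for_many_Tks_A1F_pp_Break_cusp}; multiplicity $1$ for (iv), via an expansion $F(a_{d+1},0) = (\text{nonzero})\cdot C_0 + O(C_0^2)$ paralleling the nodal $\big(\mathsf{R}\mathsf{A}_1^{\mathsf{F}}\big)$ computation; multiplicity $2$ for (ii), paralleling the nodal $\big(\mathsf{R}\mathsf{T}_1\big)$ computation in \Cref{theorem_for_many_Tks_A1F_pp_Break_Fin}; and multiplicity $3$ for (iii) — which altogether match the coefficients $3,1,3,2$ on the right-hand side of \cref{many_Tks_A1F_pp_Break_Fin_cusp}.

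The step I expect to be the main obstacle is the multiplicity bookkeeping for the two new loci $\big(\mathsf{R}\mathsf{T}_2\big)_{d+1}(d)$ and $\big(\mathsf{R}\mathsf{A}_1^{\mathsf{L}}\big)_{d+1}(d)$: here one cannot quote an earlier expansion and must instead pin down the correct one-parameter slice of the cuspidal family through the reducible limit — the cusp equations entangle the location of the cusp, the parameter controlling the order of tangency (resp. the breaking coefficient $C_0$), and the coefficients of $\Phi$ — and then check that $F(a_{d+1},0)$ vanishes to order exactly $2$, resp. $3$, rather than lower, which amounts to confirming that the putative leading coefficient (a product of values of $Q$, of coefficients of $\Phi$, and of $a_{d+1}$) is non-zero on the open part of the locus. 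A secondary, more routine, obstacle is to rerun the dimension estimates of \Cref{remark1} with $x_{d+1}$ adjoined, in order to rule out every other conceivable limiting configuration (such as $C'$ with a node away from $H_1$, or $H_1$ appearing as a multiple component) from contributing to the intersection.
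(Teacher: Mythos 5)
Your proposal follows essentially the same route as the paper's proof: the same set-theoretic decomposition into the diagonal terms plus the four reducible boundary loci, the same dimension counts to rule out other configurations, and the same affine deformation technique (solving $F=F_x=F_y=0$ linearly and the Hessian equation by the implicit function theorem, then reading the multiplicity off the order of vanishing of $F(a_{d+1},0)$), with the multiplicities $3,1,3,2$ landing exactly where the paper's explicit expansions put them. The two computations you flag as the main obstacle are precisely the ones the paper carries out in full, and your anticipated leading-order behaviour ($t^2$ for $\big(\mathsf{R}\mathsf{T}_2\big)_{d+1}(d)$ and $t^3$ for $\big(\mathsf{R}\mathsf{A}_1^{\mathsf{L}}\big)_{d+1}(d)$, with nonvanishing coefficients built from $Q$ and the coefficients of $\Phi$) is what those computations deliver.
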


\begin{rem}
As before, \cref{na2_d_pts} is obtained by intersecting both sides of \cref{many_Tks_A1F_pp_Break_Fin_cusp} with 
\[\mu_{d+1}\ :=\ y_1^2 y_d^{\delta_d-(d+3)} a_1 a_2 \cdots a_d a_{d+1}.\]
Finally, \cref{na1_d_minus_1_cusp,na2_sm,na2_d_pts} is proved. All of them together
produce \cref{mr_12} (that is, the second part of \Cref{mr1}).
\end{rem}

\begin{proof}[{\textbf{Proof of \Cref{theorem_for_many_Tks_A1F_pp_Break_Fin_cusp}}}]
The first term on the right-hand side of \cref{many_Tks_A1F_pp_Break_Fin_cusp} is justified as before. 
The remaining terms need to be justified. 

Start with the second term. The justification of the closure and multiplicity is similar to that of the 
third term of \cref{many_Tks_A1F_pp_Break_cusp} (see the proof of \Cref{theorem_for_many_Tks_A1F_pp_Break_cusp} 
for details).

Next, consider the third term. 
Its justification is similar to that of \cref{many_Tks_A1F_pp_Break_Fin} 
(see proof of \Cref{theorem_for_many_Tks_A1F_pp_Break_Fin}). 

To justify the fifth term on the right hand side of \cref{many_Tks_A1F_pp_Break_Fin_cusp},
pass to affine coordinates. Let
\begin{align*}
F(x,\,y)\,& =\, Q(x)(C_0)+ y(B_{00} + B_{10}x + \frac{B_{20}}{2}x^2 + \frac{\alpha(x)}{6} x^3 +
\beta(y) y + \gamma(x,\,y) xy),\\ 
 \textnormal{where \, }& Q(x)\,:=\, (x-A_1) \cdots (x-A_d), \\ 
& \alpha(x)\,:=\, B_{30} + \frac{B_{40}}{4} x + \cdots,\\
& \beta(y)\,:=\,B_{01} + \frac{B_{02}}{2} y + \cdots, \\ 
&\gamma(x,y) \,:=\, B_{11} + \frac{B_{21}}{2} x + \frac{B_{12}}{2} y + \cdots ;
\end{align*}
here $C_{0},\, B_{00} ,\, B_{10}$ and $B_{20}$ are small and
$\alpha(0)\,=\, B_{30}$ and $\beta(0) \,=\, B_{01}$ are non-zero. Again, we try to solve for 
\begin{align*}
F(s,\,t) & \,=\, 0, \quad F_x(s,\,t) \,=\, 0, \quad F_y(s,\,t) \,=\, 0, \quad \textnormal{and} 
\quad (F_{xx} F_{yy} - F_{xy}^2)(s,\,t) \,=\, 0. 
\end{align*} 
Using the first three equations, we can solve for $C_{0},\, B_{00}$ and $B_{10}$.
Plugging these into the last equation, $B_{20}$ can be solved in terms of $t$ 
using the implicit function theorem. Now evaluate $F$ at $(a_{d+1}, 0)$ and get that 
\begin{align*}
F(a_{d+1}, 0)&\, =\, -\frac{Q(a_{d+1}) \beta(0)}{Q(0)} t^2 + O(t^3). 
\end{align*}
The multiplicity of the intersection is clearly $2$. 

To justify the fourth term on the right-hand side of \cref{many_Tks_A1F_pp_Break_Fin_cusp},
switch to affine coordinates. Let
\begin{align*}
F(x,\,y)\,& =\, Q(x)(C_0)+ y(B_{00} + B_{10} x + B_{01}y + \frac{\alpha(x)}{2} x^2 +
\frac{\beta(y)}{2} y^2 + \gamma(x,\,y) xy),
\end{align*}
where $C_{0},\, B_{00} ,\, B_{10}$ and $B_{01}$ are small and $\alpha(0)\beta(0)-\gamma(0)^2$ 
is non-zero. Furthermore, assume that $\alpha(0) \,=\, B_{20} \,\neq\, 0$. 
The latter can be assumed because the $\mathtt{x}$--axis is not 
one of the branches of the node. Again, we try to solve for 
\[
F(s,\,t) \, =\, 0, \quad F_x(s,\,t) \,=\, 0, \quad F_y(s,\,t) \,=\, 0 \quad \textnormal{and} 
\quad (F_{xx} F_{yy} - F_{xy}^2)(s,t) \,=\, 0. 
\]
Using the first three equations, we can solve for $C_{0},\, B_{00}$ and $B_{10}$.
Plugging these into the last equation, $B_{01}$ can be solved in terms of $t$ 
using the implicit function theorem. 
Now, evaluate $F$ at $(a_{d+1},\, 0)$ and get that 
\begin{align*}
F(a_{d+1}, \,0)\ =\ Q(a_{d+1})\Big(\frac{\alpha(0)\beta(0)-\gamma(0)^2}{2 \alpha(0) Q(0)}\Big) t^3 + O(t^4). 
\end{align*}
The multiplicity of the intersection is clearly $3$. 
This completes the proof of \Cref{theorem_for_many_Tks_A1F_pp_Break_Fin_cusp}. 
\end{proof} 

\section{Extension of Caporaso-Harris for a family of nodal cubics} 
\label{CH_pl_nodal_details}

In this section the following question will be addressed:
\begin{center}
{\it What is $N(r,s)$, the number of nodal planar cubics in $\mathbb{P}^3$ 
that intersect $r$ generic lines and pass through $s$ generic points, where $r+2s = 11$?}
\end{center}

Notation: $N(r,\,s)\,=\, 0$ unless $r+2s \,=\,11$. 

Let $\widehat{\mathbb{P}}^3$ denote the dual of $\mathbb{P}^3$; 
it is the space of linear $\mathbb{P}^2$ inside $\mathbb{P}^3$. 
An element of $\widehat{\mathbb{P}}^3$ can be thought of as 
a non-zero linear functional $\eta\,:\, \mathbb{C}^4 \,\longrightarrow \,\mathbb{C}$ up to scaling.
Given such an $\eta$, define the projectivization of its kernel as $\mathbb{P}^2_{\eta}$. In other words, 
\[
\mathbb{P}^2_{\eta}\ :=\ \mathbb{P}(\eta^{-1}(0)). 
\]
Hence, an element of $\widehat{\mathbb{P}}^3$ determines a plane in $\mathbb{P}^3$ and vice versa. 
Define $\big(\mathbb{P}^2\big)_{\textsf{Fb}}$ as follows: 
\[
\big(\mathbb{P}^2\big)_{\textsf{Fb}}\ :=\ \{([\eta],\, q) \,\in\,
\widehat{\mathbb{P}}^3\times \mathbb{P}^3\,\big\vert\,\, \eta(q)\,=\,0\}.
\]
So $\big(\mathbb{P}^2\big)_{\textsf{Fb}}$ is a fiber bundle over $\widehat{\mathbb{P}}^3$ whose fiber over
any point $[\eta]$ is $\mathbb{P}^2_{\eta}$.

Recall from the previous section that $\mathcal{D}_1$ is the space of lines in $\mathbb{P}^2$. 
Let us now make a more pedantic notation and define 
$\mathcal{D}_1(\eta)$ to be the space of lines in $\mathbb{P}^2_{\eta}$. Similarly, define 
$\mathcal{D}_3(\eta)$ to be the space of cubics in $\mathbb{P}^2_{\eta}$.
Now define the following fiber bundles over $\widehat{\mathbb{P}}^3$: 
\begin{equation}\label{z1}
\big(\mathcal{D}_1\big)_{\textsf{Fb}}\,\longrightarrow\, \widehat{\mathbb{P}}^3\ \, \text{ and }\,\
\big(\mathcal{D}_3\big)_{\textsf{Fb}}\,\longrightarrow\, \widehat{\mathbb{P}}^3. 
\end{equation}
The fibers over any point $[\eta]$ are $\mathcal{D}_1(\eta)$ and $\mathcal{D}_3(\eta)$, respectively.
Finally, define 
\[
\Big(\mathcal{D}_1 \times \mathcal{D}_3\Big)_{\mathsf{Fb}}\ \longrightarrow\ \widehat{\mathbb{P}}^3
\]
to be the fiber product of the two fiber bundles in \eqref{z1}. Its fiber over any $[\eta]$
is $\mathcal{D}_1(\eta) \times \mathcal{D}_3(\eta)$.

Next, define 
\[
\mathfrak{M}^m_n(3)\ :=\ \Big(\mathcal{D}_1 \times \mathcal{D}_3\Big)_{\mathsf{Fb}} \times 
\big(Y^1\times \ldots \times Y^m\big)\times \big(Y_1 \times \ldots \times Y_n\big),
\]
where each $Y^i$ and $Y_j$ is a copy of $\mathbb{P}^3$. 
The hyperplane class of $Y^i$ and $Y_j$ 
will be denoted by $B_i$ and $H_j$ respectively.
The first Chern classes of the hyperplane bundles of $\big(\mathcal{D}_1\big)_{\mathsf{Fb}}$ and 
$\big(\mathcal{D}_3\big)_{\mathsf{Fb}}$ 
will be denoted by $\lambda_1$ and $\lambda_3$ respectively. The hyperplane class of $\widehat{\mathbb{P}}^3$ 
will be denoted by $a$. Since the entire discussion is for cubics, we are denoting the space by 
$\mathfrak{M}^m_n$ and not $\mathfrak{M}^m_n(3)$; this will make reading easier.

Next, define 
\begin{align}\label{fibre_bundle_parameter_space}
\big(\mathsf{M}^m_n \big)_{\textsf{Fb}}\ &\longrightarrow\ \widehat{\mathbb{P}}^3
\end{align}
to be the fiber bundle over $\widehat{\mathbb{P}}^3$ whose fiber over each point $[\eta]$ is 
$\mathsf{M}^m_n(\eta)$.

Note that on $\mathfrak{M}^m_n$, the following equality of cycles hold:
\begin{align} 
\Big[ \big(\mathsf{M}^m_n\big)_{\textsf{Fb}} \Big]\ & =\ (a+B_1) \cdot \ldots \cdot (a+B_m) \cdot (a+H_1) \cdot \ldots \cdot (a+H_n). 
\label{Fb_plane}
\end{align}
To see why this is true, denote an element of 
$\mathfrak{M}^m_n$ by
\[\Big(\Pi,\, \mathsf{L},\, \mathcal{C}_3,\, p_1, \,\ldots,\, p_m,\, x_1,\, \cdots,\, x_n\Big);\]
here $\Pi$ denotes a plane in $\mathbb{P}^3$. 
Furthermore, $\mathsf{L}$ and $\mathcal{C}_3$ denote 
a line and a cubic respectively in the plane $\Pi$. 
Whereas $\big(\mathsf{M}^m_n \big)_{\textsf{Fb}}$ is the locus of all 
\[\Big(\Pi,\, \mathsf{L},\, \mathcal{C}_3,\, p_1,\, \cdots,\, p_m,\, x_1,\, \cdots,\, x_n\Big)\] 
in $\mathfrak{M}^m_n$ such that 
\begin{itemize}
\item The points $p_1,\, \cdots,\, p_m$ lie on the plane $\Pi$; 
\item the points $x_1, \,\cdots,\, x_n$ lie on the plane $\Pi$.
\end{itemize}
Note that the condition that $p_i$ belongs to the plane $\Pi$ corresponds to 
intersecting with the class $a+B_i$. Similarly, the condition that
$x_j$ belongs to the plane $\Pi$ corresponds to intersecting with the class $a+H_j$.
This justifies \cref{Fb_plane}.

Another notation: Suppose for each variety $Z$ ismorphic to $\mathbb{P}^2$ we have a space $S(Z)$.
Then $\big(S\big)_{\mathsf{Fb}}$ is the fiber bundle over $\widehat{\mathbb{P}}^3$ whose fiber over each 
$[\eta]$ is $S(\mathbb{P}^2_\eta)$. To give an example,
recall the space $\mathsf{A}_1^{\mathsf{F}} \mathsf{T}_0$ defined in 
section \ref{CH_one_node_review}. It is the locus of all $(\mathsf{L},\, \mathcal{C}_3,\, p,\, x_1)$ in 
$\mathsf{M}^1_1$ such that 
\begin{itemize} 
\item the cubic $\mathcal{C}_3$ has a node at $p$, and 
\item The nodal point $p$ does not lie on the line $\mathsf{L}$.
\end{itemize} 
Now define $\big(\mathsf{A}_1^{\mathsf{F}} \mathsf{T}_0)_{\mathsf{Fb}}
\,\subseteq\, \big(\mathsf{M}^1_1\big)_{\mathsf{Fb}}$ to be
the locus of all $(\Pi,\, \mathsf{L},\, \mathcal{C}_3,\, p,\, x_1)$ such that 
\begin{itemize} 
\item the cubic $\mathcal{C}_3$ has a node at $p$, and
\item the nodal point $p$ does not lie on the line $\mathsf{L}$.
\end{itemize} 
Pictorially this looks like:

\begin{center}
            \begin{figure}[h]
                \centering
                \includegraphics[scale=0.28]{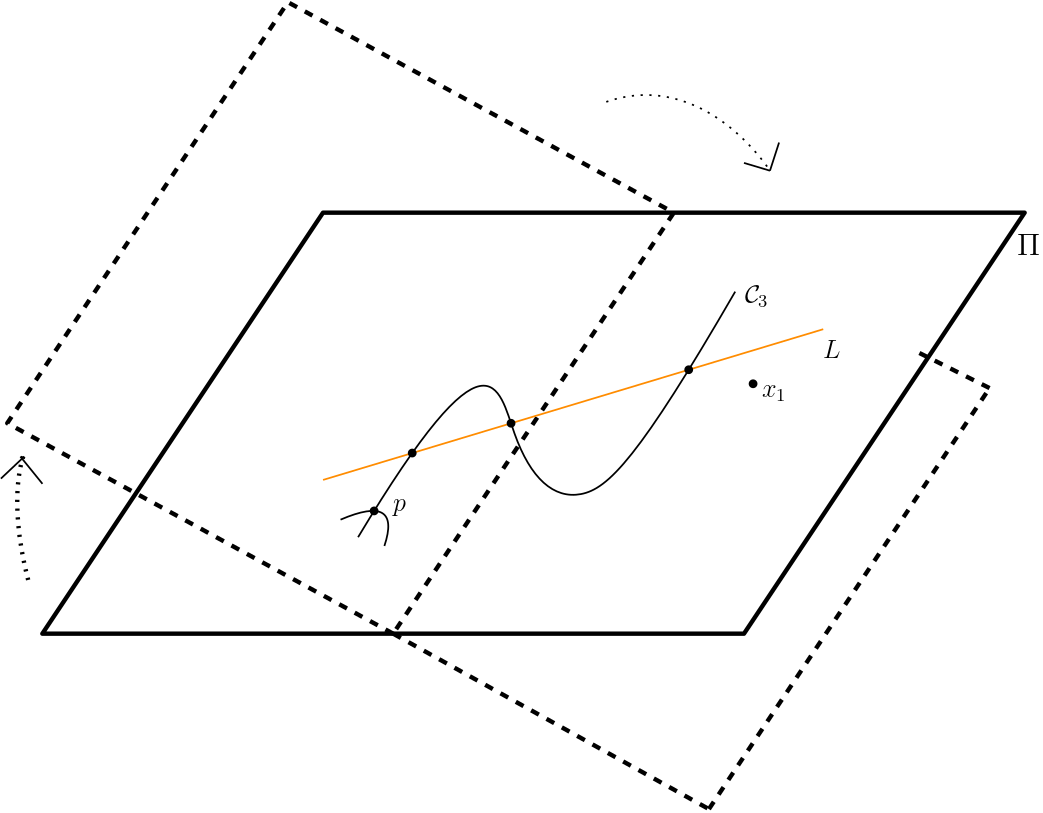}
                \caption{$\big(\mathsf{A}_1^{\mathsf{F}} \mathsf{T}_0)_{\mathsf{Fb}}$}
                \label{fbexample}
            \end{figure}
        \end{center}

All the other spaces defined in 
section \ref{CH_one_node_review} can be analogously defined.
 
Next consider the cohomology ring of $\mathfrak{M}^m_n$. 
Its generators are 
\[ a,\, \lambda_1,\, \lambda_3,\, B_1,\, \cdots,\, B_m,\, H_1,\, \cdots,\, H_n. \] 
The ring structure is given by the following relations: 
\begin{align}
\lambda_1^3\, & =\, -\lambda_1^2 a - \lambda_1 a^2 - a^3, \nonumber \\ 
\lambda_3^{10}\, & =\, -10\lambda_3^9 a - 55 \lambda_3^8 a^2 -220 \lambda_3^7 a^3, \nonumber \\ 
B_i^4\, & =\, 0\,\,\, \forall\,\, i \,=\, 1 ,\, \cdots,\, m, \quad
H_j^4\, =\, 0\,\,\,\forall\,\, j \,=\, 1 ,\, \cdots,\, n, \quad \textnormal{and} \quad a^4\,=\,0. \label{Fb_ring}
\end{align}
This fact is justified in \cite[pp.~7--8]{BMS_R_N}. We are now ready to state the main results that will be used in
enumerating the nodal planar cubics in $\mathbb{P}^3$.
The statements can be considered as family version of Theorems
\ref{step1_CH_p}, \ref{theorem_for_many_Tks_A1F_pp}, \ref{theorem_for_many_Tks_A1F_pp_Break}
and \ref{theorem_for_many_Tks_A1F_pp_Break_Fin} with $d\,=\,3$.
The spaces involved here are fiber bundle analogue of the spaces involved in the
respective statements in \Cref{CH_one_node_review}. We will use the fiber bundle version of the
spaces without defining them.  

\begin{thm} 
\label{step1_CH_p_FB}
The following equality of homology classes in $\big(\mathsf{M}^1_1\big)_{\mathsf{Fb}}$ holds:
\begin{align}
\big[\big(\mathsf{A}_1^{\mathsf{F}} \mathsf{F}\big)_{\mathsf{Fb}}\big]\cdot
\big[\big(x_1 \in \mathsf{L}\big)_{\mathsf{Fb}}\big] \cdot \big[ \big(x_1 \in \mathcal{C}_3\big)_{\mathsf{Fb}} \big] 
&\ =\ \big[\big(\mathsf{A}_1^{\mathsf{F}}\mathsf{T}_0\big)_{\mathsf{Fb}}\big]. \label{step1_CH_cycle_FB}
\end{align}
\end{thm}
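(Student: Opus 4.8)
The plan is to deduce \eqref{step1_CH_cycle_FB} from its non-family counterpart, \Cref{step1_CH_p} with $d=3$, by exploiting the fact that $\big(\mathsf{M}^1_1\big)_{\mathsf{Fb}}\to\widehat{\mathbb{P}}^3$ is a Zariski-locally trivial fiber bundle and that all four cycles occurring in \eqref{step1_CH_cycle_FB} are sub-fiber-bundles of it. First I would record the structural observations: $\big(\mathbb{P}^2\big)_{\mathsf{Fb}}$, $\big(\mathcal{D}_1\big)_{\mathsf{Fb}}$ and $\big(\mathcal{D}_3\big)_{\mathsf{Fb}}$ are Zariski-locally trivial projective bundles over $\widehat{\mathbb{P}}^3$ with structure group acting by projective-linear maps on the fibers, and $\big(\mathsf{M}^1_1\big)_{\mathsf{Fb}}$ is assembled from fiber products of these; hence it is a smooth projective variety and $\big(\mathsf{M}^1_1\big)_{\mathsf{Fb}}\to\widehat{\mathbb{P}}^3$ is a Zariski-locally trivial bundle with fiber $\mathsf{M}^1_1(\eta)\cong\mathsf{M}^1_1(3)$. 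Since the node condition and the incidence conditions ``$x_1\in\mathsf{L}$'', ``$x_1\in\mathcal{C}_3$'', ``$p\notin\mathsf{L}$'' are intrinsic to the fiber and preserved by projective-linear maps, each of $\big(\mathsf{A}_1^{\mathsf{F}}\mathsf{F}\big)_{\mathsf{Fb}}$, $\big(x_1\in\mathsf{L}\big)_{\mathsf{Fb}}$, $\big(x_1\in\mathcal{C}_3\big)_{\mathsf{Fb}}$ and $\big(\mathsf{A}_1^{\mathsf{F}}\mathsf{T}_0\big)_{\mathsf{Fb}}$ is a sub-fiber-bundle whose fiber over $[\eta]$ is exactly the corresponding locus of \Cref{CH_one_node_review} inside $\mathsf{M}^1_1(\eta)$.

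Next I would fix a Zariski-open $U\subseteq\widehat{\mathbb{P}}^3$ over which the bundle is trivial, $\big(\mathsf{M}^1_1\big)_{\mathsf{Fb}}\big|_U\cong U\times\mathsf{M}^1_1(3)$, compatibly with a trivialization of $\big(\mathbb{P}^2\big)_{\mathsf{Fb}}\big|_U$. Under this isomorphism each of the four cycles becomes the product of $U$ with the corresponding cycle of \Cref{CH_one_node_review}, because the defining conditions do not couple the plane $\Pi\in U$ to anything beyond the line and cubic that already live in $\Pi$. Therefore, restricted to $\big(\mathsf{M}^1_1\big)_{\mathsf{Fb}}\big|_U$, the left-hand side of \eqref{step1_CH_cycle_FB} is the product of $[U]$ with $\big[\mathsf{A}_1^{\mathsf{F}}\mathsf{F}\big]\cdot\big[(x_1\in\mathsf{L})\big]\cdot\big[(x_1\in\mathcal{C}_3)\big]$ computed in $\mathsf{M}^1_1(3)$, and \Cref{step1_CH_p} (equivalently \Cref{theorem_for_many_Tks_A1F_pp} with $n=0$, $d=3$, for which the sum on the right-hand side is empty) identifies this with $\big[\mathsf{A}_1^{\mathsf{F}}\mathsf{T}_0\big]$. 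In particular, the intersection is transverse along its open dense stratum and every component occurs with multiplicity one; and the ``nodal point on the line'' locus, which by \eqref{step_d_dim_red} with $n=0$, $d=3$ has $b-a=-2<0$ (see \Cref{remark1}), remains of strictly smaller-than-expected dimension after adjoining the $3$ base directions of $U$, hence still does not contribute.

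Finally I would note that transversality and the multiplicity-one property are local conditions on $\big(\mathsf{M}^1_1\big)_{\mathsf{Fb}}$, and the identification of the closure of the intersection cycle with $\big(\mathsf{A}_1^{\mathsf{F}}\mathsf{T}_0\big)_{\mathsf{Fb}}$ is compatible with restriction to the opens $U$ covering $\widehat{\mathbb{P}}^3$; hence \eqref{step1_CH_cycle_FB} holds globally in $H_*\big(\big(\mathsf{M}^1_1\big)_{\mathsf{Fb}}\big)$. The one point deserving care---the main obstacle---is to be sure that no ``horizontal'' (base-direction) tangency can spoil transversality in the total space. This is handled by the observation that every imposed incidence condition constrains only the fiber directions while $\widehat{\mathbb{P}}^3$ enters all four cycles freely, so that transversality in the total space is equivalent to fiberwise transversality over each $[\eta]$, which is precisely the content of \Cref{step1_CH_p} for $d=3$.
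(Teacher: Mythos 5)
Your proposal is correct and matches the paper's approach: the paper's entire proof of this theorem is the assertion that it is the family version of \Cref{step1_CH_p} with an identical proof, and your reduction via Zariski-local triviality of $\big(\mathsf{M}^1_1\big)_{\mathsf{Fb}}\to\widehat{\mathbb{P}}^3$, together with the observations that all four cycles are sub-fiber-bundles cut out by fiberwise conditions and that transversality and intersection multiplicities are local, is a clean way of making that one-line assertion precise. The dimension bookkeeping (each stratum gaining exactly the three base directions, so the excess locus of \Cref{remark1} stays excess) is exactly the point that needs checking, and you check it.
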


This is the family version of \Cref{step1_CH_p}; 
the proof is identical.

Now the goal is to use 
\cref{step1_CH_cycle_FB} and extract numbers. Here it is more convenient to do intersection theory 
on the ambient space $\mathfrak{M}^1_1$, which we recall is given by
\[ \mathfrak{M}^1_1:= \Big(\mathcal{D}_1 \times \mathcal{D}_3\Big)_{\mathsf{Fb}} \times Y^1 \times Y_1.\]
Let $\alpha$ be a class in 
$\big(\mathsf{M}^1_1\big)_{\mathsf{Fb}}$, and let $\mu$ be a class in $\mathfrak{M}^1_1$. 
Denote the restriction of $\mu$ to $\big(\mathsf{M}^1_1\big)_{\mathsf{Fb}}$ by the same symbol $\mu$. 
Then, \cref{Fb_plane} immediately yields the following: 
\begin{equation}\label{fb_comp}
\alpha \cdot \mu \ =\ \alpha \cdot (a+B_1) \cdot (a + H_1) \cdot \mu.
\end{equation}
The left-hand side of equation \eqref{fb_comp} is an intersection number in 
$\big(\mathsf{M}^1_1\big)_{\mathsf{Fb}}$, while the right-hand side is an intersection number in 
$\mathfrak{M}^1_1$. (If two classes are not of complementary dimension, 
then the corresponding intersection number is defined to be zero.)

As before, the goal is to 
intersect both sides of \cref{step1_CH_cycle_FB} with a suitable $\mu$, and extract numbers using \cref{fb_comp}. 
The objective is to compute $N(r,\,s)$, the 
number of nodal planar cubics in $\mathbb{P}^3$ that intersect $r$ generic lines and $s$ generic points. 
Let us start by explaining how equation \eqref{step1_CH_cycle_FB} will help us compute $N(r,\,s)$. 

Similar to the space $\mathsf{A}_1^{\mathsf{F}}$ defined in \Cref{CH_one_node_review},
define the analogous fiber bundle $\big(\mathsf{A}_1^{\mathsf{F}}\big)_{\mathsf{Fb}}$ over
$\widehat{\mathbb{P}^3}$. An element of this space is a triple consisting of a line, a cubic and a point
such that all of them lie on a single plane in $\mathbb{P}^3$.

We will compute
\begin{align}
\big[\big(\mathsf{A}_1^{\mathsf{F}}\big)_{\mathsf{Fb}}\big]\cdot \lambda_1^m
\lambda_3^j a^l, \label{A1f_Fb_int_num}.
\end{align}
It will be shown that it is enough to compute \eqref{A1f_Fb_int_num}.

Using \cref{step1_CH_cycle_FB}, we will show that
\begin{align}
\big[\big(\mathsf{A}_1^{\mathsf{F}}\big)_{\mathsf{Fb}}\big]\cdot \lambda_1^m \lambda_3^j a^l\ &=\ 
\frac{1}{3} \big[\big(\mathsf{A}_1^{\mathsf{F}}\mathsf{T}_0\big)_{\mathsf{Fb}}\big] \cdot \lambda_1^m \lambda_3^j a^l.
\label{CH_FB_Step1_new_form}
\end{align} 
Hence, computation of
\begin{align}
\big[\big(\mathsf{A}_1^{\mathsf{F}} \mathsf{T}_0\big)_{\mathsf{Fb}}\big]\cdot \lambda_1^m \lambda_3^j a^l \label{a1ft0_int_num}
\end{align}
would give \cref{A1f_Fb_int_num}.

To justify these assertions, we begin by showing that $N(r,s)$ can 
be obtained as a suitable intersection number involving the class 
$\big[\big(\mathsf{A}_1^{\mathsf{F}}\big)_{\mathsf{Fb}}\big]$. The reader is reminded
that this will be an intersection number inside the space $\big(\mathsf{M}^1_0\big)_{\mathsf{Fb}}$. 

The homology classes represented 
by the subspace of cubics that intersect a generic line and intersect a generic point will be denoted by 
$\mathcal{H}^{\mathcal{C}_3}_l$ and $\mathcal{H}^{\mathcal{C}_3}_p$ respectively. 
Similarly, the homology classes represented 
by the subspace of lines that intersect a generic line and intersect a generic point will be denoted by 
$\mathcal{H}^{\mathsf{L}}_l$ and $\mathcal{H}^{\mathsf{L}}_p$
respectively. 
It is shown in \cite[pp.~10--11]{BMS_R_N} that 
\begin{align}
\mathcal{H}^{\mathsf{L}}_l &\, =\, \lambda_1 + a, \quad \mathcal{H}^{\mathsf{L}}_{p} \,=\, \lambda_1 a, \quad
\mathcal{H}^{\mathcal{C}_3}_l \,=\, \lambda_3 + 3 a \quad \textnormal{and} \quad
\mathcal{H}^{\mathcal{C}_3}_p\, =\, \lambda_3 a. \label{const_P3_pl}
\end{align}
Now observe that 
\begin{align}
N(r,\,s)\ &=\ \big[\big(\mathsf{A}_1^{\mathsf{F}}\big)_{\mathsf{Fb}}\big] \cdot 
\big(\mathcal{H}^{\mathcal{C}_3}_l\big)^r \cdot \big(\mathcal{H}^{\mathcal{C}_3}_p\big)^s \cdot 
\big(\mathcal{H}^{\mathsf{L}}_l\big)^2.\label{a1F_num_fb}
\end{align}
To see why this is so, first assume that $r+2s\,=\,11$; otherwise both the sides of \cref{a1F_num_fb} 
are equal to zero 
because of the convention (namely that if two classes are not of complementary dimensions, then their 
intersection number is zero).

Since $r+2s\,=\,11$, intersecting $\big[\big(\mathsf{A}_1^{\mathsf{F}}\big)_{\mathsf{Fb}}\big]$
with 
$\big(\mathcal{H}^{\mathcal{C}_3}_l\big)^r \cdot \big(\mathcal{H}^{\mathcal{C}_3}_p\big)^s$ fixes the 
cubic (and also the plane inside which the cubic lies).
Now intersecting with $\big(\mathcal{H}^{\mathsf{L}}_l\big)^2$ does the following: the two lines will 
intersect the plane at two points and through these two points on the plane a unique line passes. That fixes the 
line. This proves equation \eqref{a1F_num_fb}.
Using equations \eqref{a1F_num_fb}, \eqref{const_P3_pl} and \eqref{Fb_ring}, we conclude that 
$N(r,s)$ is a sum of intersection numbers of the type given by \cref{A1f_Fb_int_num}. 

It remains to prove \cref{CH_FB_Step1_new_form}. We will rewrite \cref{step1_CH_cycle_FB} 
as an equality of homology classes in $\mathfrak{M}^1_1$. 
A couple of definitions are needed for this. Define 
$\big(\mathsf{A}_1^{\mathsf{F}}\big)_{\mathsf{Fb}} \mathsf{F}$ to be the locus of points
$(\Pi,\, \mathsf{L},\, \mathcal{C}_3,\, p, \,x_1)$ in $\mathfrak{M}^1_1$ 
such that $(\Pi,\, \mathsf{L},\, \mathcal{C}_3,\, p)\,\in\, \big(\mathsf{A}_1^{\mathsf{F}}\big)_{\mathsf{Fb}}$. 
There is no restriction on the point $x_1$; it need not have to lie on the plane $\Pi$. In other words,
\begin{align}
\big(\mathsf{A}_1^{\mathsf{F}}\big)_{\mathsf{Fb}} \mathsf{F}\ & =\
\big(\mathsf{A}^{\mathsf{F}}\big)_{\mathsf{Fb}} \times Y_1. 
\label{free_prod} 
\end{align} 
Note that $\big(\mathsf{A}_1^{\mathsf{F}}\big)_{\mathsf{Fb}} \mathsf{F}$ is \textit{not} the same as 
$\big(\mathsf{A}_1^{\mathsf{F}}\mathsf{F}\big)_{\mathsf{Fb}}$; in the latter case the point $x_1$ is required 
to lie on the plane $\Pi$. Denote by $(x_1 \in \Pi)$ the 
locus of all points $(\Pi,\, \mathsf{L},\, \mathcal{C}_3,\, p,\, x_1)$ in $\mathfrak{M}^1_1$ 
such that the point $x_1$ lies on the plane $\Pi$. 

Note that \cref{step1_CH_cycle_FB} is equivalent to the fact that on 
$\mathfrak{M}^1_1$ the following equality of homology classes holds:
\begin{align}
\big[\big(\mathsf{A}_1^{\mathsf{F}}\big)_{\mathsf{Fb}} \mathsf{F}\big]\cdot \big[(x_1 \in \Pi)\big]\cdot
\big[\big(x_1 \in \mathsf{L}\big)\big] \cdot \big[ \big(x_1 \in \mathcal{C}_3\big)\big] 
&\ =\ \big[\big(\mathsf{A}_1^{\mathsf{F}}\mathsf{T}_0\big)_{\mathsf{Fb}}\big]. \label{step1_CH_cycle_FB_Free}
\end{align}
A subtle point needs explanation. Consider the left-hand side of \cref{step1_CH_cycle_FB_Free}. Strictly speaking, we have not actually defined the third and fourth 
factors on the left-hand side. We define 
$\big(x_1 \in \mathsf{L}\big)$ to be a subspace of $\mathfrak{M}^1_1$ (not $\big(\mathsf{M}^1_1\big)_{\mathsf{Fb}}$), 
where the point $x_1$ is required to lie on the line. Notice the difference between 
$\big(x_1 \in \mathsf{L}\big)$ and $\big(x_1 \in \mathsf{L}\big)_{\mathsf{Fb}}$; the latter is a subspace of 
$\big(\mathsf{M}^1_1\big)_{\mathsf{Fb}}$. However, restricted to $\big(x_1 \in \Pi\big)$, these two subspaces 
are the same. 
The same remark applies for the space $\big(x_1 \in \mathcal{C}_3\big)$. 

Note that in $\mathfrak{M}^1_1$, the following equalities of homology classes hold: 
\begin{align}
\big[x_1 \in \Pi\big] & \,=\, a+H_1, \quad\big[\big(x_1 \in \mathsf{L}\big)_{\mathsf{Fb}}\big]
\,=\, \lambda_1 + H_1
\quad \textnormal{ and } \quad \big[ \big(x_1 \in \mathcal{C}_3\big)_{\mathsf{Fb}} \big]
\,=\, \lambda_3 + 3 H_1. \label{k1}
\end{align} 
Next, using \cref{free_prod} it follows that 
\begin{align}
\big[\big(\mathsf{A}_1^{\mathsf{F}}\big)_{\mathsf{Fb}}\big]\cdot \lambda_1^m \lambda_3^j a^l &\ =\ 
\big[\big(\mathsf{A}_1^{\mathsf{F}}\big)_{\mathsf{Fb}} \mathsf{F}\big] \cdot \lambda_1^m \lambda_3^j a^l H_1^3. \label{kkiuy}
\end{align} 
Finally, the following identity is obtained:
\begin{align}
\label{div_3}
\big[\big(\mathsf{A}_1^{\mathsf{F}}\big)_{\mathsf{Fb}} \mathsf{F}\big] \cdot \lambda_1^m \lambda_3^j H_1^{n} a^l 
&\ =\ \begin{cases} \frac{1}{3} \big[\big(\mathsf{A}_1^{\mathsf{F}}\mathsf{T}_0\big)_{\mathsf{Fb}}\big] \cdot 
\lambda_1^m \lambda_3^j a^l \qquad \textnormal{if} \qquad n \,=\, 3\\ 
0 \qquad \qquad \qquad \qquad \qquad \quad \textnormal{if} \qquad n\,\neq\, 3. 
\end{cases} 
\end{align}
The second case of \cref{div_3} follows from \cref{free_prod}. The first case of \cref{div_3} follows from 
equations \eqref{k1}, 
\eqref{step1_CH_cycle_FB_Free} and \eqref{free_prod}. 
Finally, note that \cref{div_3}, combined with \cref{kkiuy} gives \cref{CH_FB_Step1_new_form}. 

To summarize what has been established so far, the goal is to compute the number $N(r,s)$.
It is enough to compute all the intersection numbers given by \cref{A1f_Fb_int_num}.
It is then established that it is enough to compute all intersection numbers given by 
\cref{a1ft0_int_num}; this is done through \cref{CH_FB_Step1_new_form}.
 
The following theorem is used in computing the intersection numbers given by \cref{a1ft0_int_num}.

\begin{thm}
\label{theorem_for_many_Tks_A1F_pp_FB}
The following equality of homology classes in $\big(\mathsf{M}^1_{2}\big)_{\mathsf{Fb}}$ holds:
\begin{align}\label{many_Tks_A1F_pp_FB}
\big[\big(\mathsf{A}_1^{\mathsf{F}} \mathsf{T}_{0}\mathsf{F}\big)_{\mathsf{Fb}}\big]\cdot 
\big[\big(x_{2}\in \mathsf{L}\big)_{\mathsf{Fb}}\big]\cdot 
\big[\big(x_{2}\in \mathcal{C}_3\big)_{\mathsf{Fb}}\big] \,\,&= \,\,
\big[\big(\mathsf{A}_1^{\mathsf{F}} \mathsf{T}_{0}\mathsf{T}_{0}\big)_{\mathsf{Fb}}\big]\nonumber \\ 
& + \big[\big(\mathsf{A}_1^{\mathsf{F}}
\mathsf{T}_{0}\mathsf{T}_{0} \mathsf{F}\big)_{\mathsf{Fb}}\big] \cdot \big[\big(x_1 = x_{2}\big)_{\mathsf{Fb}}\big].
\end{align}
\end{thm}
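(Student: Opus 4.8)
The plan is to repeat the proof of \Cref{theorem_for_many_Tks_A1F_pp} with $d = 3$ and $n = 1$ (the boundary case $n = d-2$ of the hypothesis $0 \le n \le d-2$), carried out now in the family over $\widehat{\mathbb{P}}^3$. By construction each of the spaces appearing in \eqref{many_Tks_A1F_pp_FB} is the fiber bundle over $\widehat{\mathbb{P}}^3$ whose fiber over $[\eta]$ is the corresponding subspace of $\mathsf{M}^1_2(\eta) \cong \mathsf{M}^1_2(3)$, and the intersection of such fibered cycles is computed fiber by fiber. The essential structural observation is that the bundle $(\mathsf{M}^1_2)_{\mathsf{Fb}} \to \widehat{\mathbb{P}}^3$ is locally trivial: over a small open $V \subset \widehat{\mathbb{P}}^3$ it is isomorphic to $V \times \mathsf{M}^1_2(3)$, and under this isomorphism each of the spaces in \eqref{many_Tks_A1F_pp_FB} restricts to $V$ times the corresponding subspace of $\mathsf{M}^1_2(3)$. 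So fiberwise, \eqref{many_Tks_A1F_pp_FB} is exactly \eqref{many_Tks_A1F_pp} with $d = 3$, $n = 1$.

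First I would transport the set-theoretic decomposition of \Cref{remark1}. Over each $V$ as above, the intersection of the three classes on the left of \eqref{many_Tks_A1F_pp_FB} is supported on three loci: the one where $x_2$ avoids both $x_1$ and the nodal point (giving the first term on the right), the one where $x_2 = x_1$ (giving the second term on the right), and the one where the nodal point lies on the line. Forming the fiber bundle over $\widehat{\mathbb{P}}^3$ raises the dimension of every space in sight uniformly by $\dim \widehat{\mathbb{P}}^3 = 3$, so the codimension comparison of \Cref{remark1} is unaffected; in particular, for $d = 3$, $n = 1$ we have $n < d - 1$, so \eqref{step_d_dim_red} gives a strict inequality and the stratum where the nodal point lies on the line --- whether or not the line is a component of the cubic --- has strictly higher codimension and makes no contribution.

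Next I would transport the smoothness, transversality and multiplicity computations from the proof of \Cref{theorem_for_many_Tks_A1F_pp}: that $\mathsf{A}_1^{\mathsf{F}}\mathsf{T}_0\mathsf{F}$ and $\mathsf{A}_1^{\mathsf{F}}\mathsf{T}_0\mathsf{T}_0$ are smooth of the expected dimension, that the first term on the right occurs with multiplicity one, and that the diagonal contribution $x_1 = x_2$ occurs with multiplicity one. Each of these is a local statement established in affine charts of a fixed $\mathbb{P}^2$ by exhibiting explicit test polynomials and checking that $0$ is a regular value of the relevant evaluation maps; under the local product structure $V \times \mathsf{M}^1_2(3)$ the same test polynomials and the same regularity computations apply on every fiber simultaneously, the differentials in the $V$-directions adding nothing. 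Assembling these over all $V$ yields the asserted equality of homology classes in $(\mathsf{M}^1_2)_{\mathsf{Fb}}$.

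The main obstacle --- indeed the only point that uses anything specific to the family --- is the codimension bookkeeping for the stratum where the nodal point lies on the line, together with the reducible degeneration in which the line becomes a component of the cubic. For $d = 3$, $n = 1$ this causes no trouble, precisely because of the strict inequality $n < d - 1$ as in the base case; but it is the place where the family version could in principle deviate from the fiberwise picture, so one must confirm that passing to the fiber bundle over $\widehat{\mathbb{P}}^3$ leaves the relevant codimensions intact. It does, since it raises every dimension by the same amount. Everything else is a verbatim repetition of the $d = 3$, $n = 1$ case of \Cref{theorem_for_many_Tks_A1F_pp}, exactly as \Cref{step1_CH_p_FB} was deduced from \Cref{step1_CH_p}.
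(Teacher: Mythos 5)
Your proposal is correct and matches the paper's approach: the paper simply states that this is the family version of \Cref{theorem_for_many_Tks_A1F_pp} with $n=1$ (and $d=3$) and that "the proof is identical," which is precisely the fiberwise/local-triviality argument you spell out. Your additional remarks on the uniform dimension shift by $\dim\widehat{\mathbb{P}}^3=3$ and the transport of the affine transversality and multiplicity computations are a faithful elaboration of what the paper leaves implicit.
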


This is the family version of \Cref{theorem_for_many_Tks_A1F_pp_FB} when $n=1$; the proof is identical.

Let us now explain how this Theorem will help us compute the intersection numbers given by \cref{a1ft0_int_num}. 
Note that 
\begin{align}
\big[\big(\mathsf{A}_1^{\mathsf{F}}\mathsf{T}_0\big)_{\mathsf{Fb}}\big] \cdot \lambda_1^m \lambda_3^j a^l\ & =\ 
\big[\big(\mathsf{A}_1^{\mathsf{F}}\mathsf{T}_0\big)_{\mathsf{Fb}} \mathsf{F}\big] \cdot 
\lambda_1^m \lambda_3^j a^l H_2^3, \qquad \nonumber \\ 
\textnormal{where} \qquad \big(\mathsf{A}_1^{\mathsf{F}}\mathsf{T}_0\big)_{\mathsf{Fb}} \mathsf{F} \
& =\ \big(\mathsf{A}_1^{\mathsf{F}}\mathsf{T}_0\big)_{\mathsf{Fb}}\times Y_2.\label{k2}
\end{align}
The following will be shown:
\begin{align}
\label{div_4}
\big[\big(\mathsf{A}_1^{\mathsf{F}} \mathsf{T}_0\big)_{\mathsf{Fb}} \mathsf{F}\big] \cdot \lambda_1^m \lambda_3^j H_2^{n} a^l 
\ & =\ \begin{cases} \frac{1}{2} \big[\big(\mathsf{A}_1^{\mathsf{F}}\mathsf{T}_0 \mathsf{T}_0\big)_{\mathsf{Fb}}\big] \cdot 
\lambda_1^m \lambda_3^j a^l \qquad \textnormal{if} \qquad n\,=\, 3\\ 
0 \qquad \qquad \qquad \qquad \qquad \quad \textnormal{if} \qquad n\,\neq\, 3.
\end{cases} 
\end{align}
The second case of \cref{div_4} follows immediately 
from the definition of 
$\big(\mathsf{A}_1^{\mathsf{F}}\mathsf{T}_0\big)_{\mathsf{Fb}} \mathsf{F}$. 
Let us now justify the first case, namely when $n\,=\,3$. 
Note that \cref{many_Tks_A1F_pp_FB} 
is equivalent to the fact that on $\mathfrak{M}^1_2$ the following equality of homology classes hold:
\begin{align}\label{many_Tks_A1F_pp_FB_Free}
\big[\big(\mathsf{A}_1^{\mathsf{F}} \mathsf{T}_{0}\big)_{\mathsf{Fb}} \mathsf{F}\big]\cdot 
\big[\big(x_2 \in \Pi\big)\big]\cdot 
\big[\big(x_{2}\in \mathsf{L}\big)\big]\cdot 
\big[\big(x_{2}\in \mathcal{C}_3\big)\big] \,\,&= \,\,
\big[\big(\mathsf{A}_1^{\mathsf{F}} \mathsf{T}_{0}\mathsf{T}_{0}\big)_{\mathsf{Fb}}\big]\nonumber \\ 
& + \big[\big(\mathsf{A}_1^{\mathsf{F}}
\mathsf{T}_{0}\mathsf{T}_{0}\big)_{\mathsf{Fb}}\mathsf{F}\big] \cdot \big[\big(x_1 = x_{2}\big)\big].
\end{align}
Now note that in $\mathfrak{M}^1_2$, the following equality of homology classes holds:
\begin{align}
\big[\big(x_2 \in \Pi\big)\big]\, =\, a+H_2, \quad \big[\big(x_2 \in \mathsf{L}\big)\big]\, & =\, \lambda_1 + H_2,
\quad \big[ \big(x_2 \in \mathcal{C}_3\big) \big] \,=\, \lambda_3 + 3 H_2 \nonumber\\ 
\textnormal{and } \quad \big[\big(x_1 = x_{2}\big)\big] \,& =\, H_1^3 + H_1^2 H_2 + H_1 H_2^2 + H_2^3.\label{k8}
\end{align} 
The first case of \cref{div_4} now follows immediately from equations \eqref{many_Tks_A1F_pp_FB_Free} 
and \eqref{k8}. 

Using equations \eqref{div_4} and \eqref{k2} it follows
that in order to compute the intersection numbers given by \cref{a1ft0_int_num}, 
it is enough to compute the following intersection number:
\begin{align}
\big[\big(\mathsf{A}_1^{\mathsf{F}}\mathsf{T}_0 \mathsf{T}_0\big)_{\mathsf{Fb}}\big] \cdot \lambda_1^m \lambda_3^j a^l. 
\label{a1ft0_int_num_st2}
\end{align}
it was also shown that to compute $N(r,s)$, it is sufficient to compute 
the numbers given by \cref{a1ft0_int_num}. 
Hence, to compute 
$N(r,s)$, it suffices to compute the numbers given by \cref{a1ft0_int_num_st2}.
The following theorem will be used for it.

\begin{thm}
\label{theorem_for_many_Tks_A1F_pp_Break_FB}
The following equality of homology classes in $\big(\mathsf{M}_{{3}}^1\big)_{\mathsf{Fb}}$ 
holds:
\begin{align}\label{many_Tks_A1F_pp_Break_FB}
\big[\big(\mathsf{A}_1^{\mathsf{F}} \mathsf{T}_{0} \mathsf{T}_{0}\mathsf{F}\big)_{\mathsf{Fb}}\big]\cdot 
\big[\big(x_3 \in \mathsf{L}\big)_{\mathsf{Fb}}\big]\cdot \big[\big(x_3 \in \mathcal{C}_3\big)_{\mathsf{Fb}}\big] \,\,&= \,\,
\big[\big(\mathsf{A}_1^{\mathsf{F}} \mathsf{T}_{0} \mathsf{T}_0 \mathsf{T}_{0}\big)_{\mathsf{Fb}}\big] \nonumber \\ 
& + \big[\big(\mathsf{A}_1^{\mathsf{F}}
\mathsf{T}_{0}\mathsf{T}_0\mathsf{F}\big)_{\mathsf{Fb}}\big] \cdot \big[\big(x_1 = x_3\big)_{\mathsf{Fb}}\big] \nonumber \\ 
& + \big[\big(\mathsf{A}_1^{\mathsf{F}}
\mathsf{T}_{0}\mathsf{T}_0\mathsf{F}\big)_{\mathsf{Fb}}\big] \cdot \big[\big(x_2 = x_3\big)_{\mathsf{Fb}}\big] \nonumber \\ 
& + \big[\big(\mathsf{R}_3\big)_{\mathsf{Fb}}\big].
\end{align}
\end{thm}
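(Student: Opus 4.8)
This is the fiber bundle version of \Cref{theorem_for_many_Tks_A1F_pp_Break} with $d\,=\,3$ (so that $d-1\,=\,2$ and the last insertion point is $x_3$), and the plan is to transplant that proof fiber by fiber over $\widehat{\mathbb{P}}^3$. The left-hand side of \eqref{many_Tks_A1F_pp_Break_FB} is the homology class of the set-theoretic intersection of $\overline{\big(\mathsf{A}_1^{\mathsf{F}}\mathsf{T}_0\mathsf{T}_0\mathsf{F}\big)_{\mathsf{Fb}}}$ with the two incidence loci $\big(x_3\in\mathsf{L}\big)_{\mathsf{Fb}}$ and $\big(x_3\in\mathcal{C}_3\big)_{\mathsf{Fb}}$ inside $\big(\mathsf{M}^1_3\big)_{\mathsf{Fb}}$. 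First I would describe this intersection inside a single plane $\mathbb{P}^2_{\eta}$: the analysis is word for word the one in the remark following the statement of \Cref{theorem_for_many_Tks_A1F_pp_Break}. Either the nodal point $p$ avoids $\mathsf{L}$, in which case $x_3$ is either a further transverse intersection point of $\mathsf{L}$ and $\mathcal{C}_3$ --- contributing $\big(\mathsf{A}_1^{\mathsf{F}}\mathsf{T}_0\mathsf{T}_0\mathsf{T}_0\big)_{\mathsf{Fb}}$, the first term --- or equal to $x_1$ or to $x_2$, contributing the second and third terms; or $p$ lies on $\mathsf{L}$, in which case (since here $n\,=\,d-1$, so the dimension count \eqref{step_d_dim_red} no longer excludes this) the cubic must be reducible with $\mathsf{L}$ a component and $p$ one of the intersection points of the residual conic with $\mathsf{L}$, which is exactly $\big(\mathsf{R}_3\big)_{\mathsf{Fb}}$, the fourth term. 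The locus where $p\in\mathsf{L}$ while $\mathsf{L}$ is not a component has strictly larger codimension and so is discarded.

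The next step is to confirm that each of these four loci really appears in the closure, and with the stated multiplicity. The first three are handled exactly as in \Cref{theorem_for_many_Tks_A1F_pp} and its family analogue \Cref{theorem_for_many_Tks_A1F_pp_FB}: transversality gives the first term multiplicity one, and the explicit tangent vectors of the form $\eta(x,y)=(y-y_0)^2(x-x_i)$ used there give the $x_i=x_3$ terms multiplicity one. The genuinely new point is the term $\big[\big(\mathsf{R}_3\big)_{\mathsf{Fb}}\big]$, so the core of the argument is the inclusion $\overline{\big(\mathsf{R}_3\big)_{\mathsf{Fb}}}\subseteq\overline{\big(\mathsf{A}_1^{\mathsf{F}}\mathsf{T}_0\mathsf{T}_0\mathsf{F}\big)_{\mathsf{Fb}}}$ together with a multiplicity-one count. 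I would reproduce verbatim the deformation construction from the proof of \Cref{theorem_for_many_Tks_A1F_pp_Break}: take a point of $\big(\mathsf{R}_3\big)_{\mathsf{Fb}}$, pass to affine coordinates in its plane with $\mathsf{L}$ the $\mathtt{x}$--axis and $p$ the origin, write the reducible cubic as $y\,\varphi(x,y)$ with $\varphi$ a conic through the origin meeting the axis transversally (so $b_{00}=0$, $b_{10}\neq0$), deform it to $F(x,y)=(x-A_1)(x-A_2)(C_0+C_1x)+y\,\Phi(x,y)$, and solve \eqref{ch_step3_node_eqn} for a nearby node $(s,t)$ off the axis by the closed-form expressions \eqref{sol_ch_step3}; the evaluation $F(a_3,0)=-\frac{Q(a_3)\,a_3\,B_{10}}{Q(0)}\,t+O(t^2)$ is linear with nonzero leading coefficient, so the intersection multiplicity is one.

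The step requiring the most care is checking that all of this survives the passage from a single fiber to the total space $\big(\mathsf{M}^1_3\big)_{\mathsf{Fb}}$. Every ingredient in the proof of \Cref{theorem_for_many_Tks_A1F_pp_Break} is local and fiberwise: it lives inside the affine chart $\mathcal{F}_3^*\times\mathbb{C}^2\times\mathbb{C}^3$ attached to one plane, and the deforming polynomials $\eta$ there lie in the fiber $\mathcal{F}_3$. Since the universal plane $\big(\mathbb{P}^2\big)_{\mathsf{Fb}}\to\widehat{\mathbb{P}}^3$ is a Zariski locally trivial $\mathbb{P}^2$--bundle, one may carry out each such construction in a trivialising neighbourhood of $[\eta]$, so that the deformed datum $\big(F,(s,t),A_1,A_2,A_3\big)$ depends algebraically on the base point and the resulting family fills out a genuine open neighbourhood in $\big(\mathsf{M}^1_3\big)_{\mathsf{Fb}}$ rather than merely a neighbourhood in the fiber; likewise the regularity statements certified by the $\eta_{ij}$ hold uniformly over the base. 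Finally, $\big(\mathsf{M}^1_3\big)_{\mathsf{Fb}}$ is smooth of the expected dimension, and the reducible-with-$\mathsf{L}$ component occurs in the expected (non-excess) dimension precisely because $n=d-1$; with the four closure and multiplicity statements in hand, \eqref{many_Tks_A1F_pp_Break_FB} follows as an identity of homology classes.
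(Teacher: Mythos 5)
Your proposal is correct and follows essentially the same route as the paper, which simply declares this theorem to be the family version of \Cref{theorem_for_many_Tks_A1F_pp_Break} with the proof carried over verbatim; your fiberwise decomposition, the closure statement for $\big(\mathsf{R}_3\big)_{\mathsf{Fb}}$ via the deformation \eqref{sol_ch_step3}, and the multiplicity-one computation from \eqref{mult_CH_step3} are exactly the intended argument. Your added remark on local triviality of the $\mathbb{P}^2$--bundle over $\widehat{\mathbb{P}}^3$ supplies detail the paper leaves implicit, but it does not change the approach.
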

This the family version of \Cref{theorem_for_many_Tks_A1F_pp_Break}; the proof is identical. 

To explain how this helps in computing the intersection numbers in \cref{a1ft0_int_num_st2}, 
note that 
\begin{align}
\big[\big(\mathsf{A}_1^{\mathsf{F}}\mathsf{T}_0\mathsf{T}_0\big)_{\mathsf{Fb}}\big] \cdot \lambda_1^m \lambda_3^j a^l & = 
\big[\big(\mathsf{A}_1^{\mathsf{F}}\mathsf{T}_0 \mathsf{T}_0\big)_{\mathsf{Fb}} \mathsf{F}\big] \cdot 
\lambda_1^m \lambda_3^j a^l H_3^3, \nonumber \\ 
\textnormal{where} \qquad \big(\mathsf{A}_1^{\mathsf{F}}\mathsf{T}_0 \mathsf{T}_0\big)_{\mathsf{Fb}} \mathsf{F} 
& = \big(\mathsf{A}_1^{\mathsf{F}}\mathsf{T}_0 \mathsf{T}_0\big)_{\mathsf{Fb}} \times Y_3.\nonumber
\end{align}
It will be shown that 
\begin{align}
\label{div_5}
\big[\big(\mathsf{A}_1^{\mathsf{F}} \mathsf{T}_0 \mathsf{T}_0\big)_{\mathsf{Fb}} \mathsf{F}\big] \cdot \lambda_1^m \lambda_3^j H_3^{n} a^l 
& = \begin{cases} \big[\big(\mathsf{A}_1^{\mathsf{F}}\mathsf{T}_0 \mathsf{T}_0\mathsf{T}_0\big)_{\mathsf{Fb}}\big] \cdot 
\lambda_1^m \lambda_3^j a^l \\ 
\qquad \quad +\big[\big(\mathsf{R}_3\big)_{\mathsf{Fb}}\big] \cdot \lambda_1^m \lambda_3^j a^l
\qquad \textnormal{if} \qquad n\,=\, 3\\ 
0 \qquad \qquad \qquad \qquad \qquad \quad \textnormal{if} \qquad n\,\neq\, 3. 
\end{cases} 
\end{align}
The second case of \cref{div_5} is immediate from the definition of 
$\big(\mathsf{A}_1^{\mathsf{F}}\mathsf{T}_0 \mathsf{T}_0\big)_{\mathsf{Fb}} \mathsf{F}$. 
To justify the first case, namely when $n\,=\,3$, note that \cref{many_Tks_A1F_pp_Break_FB} 
is equivalent to the fact that on $\mathfrak{M}^1_3$ the following equality of homology classes holds:
\begin{align}\label{many_Tks_A1F_pp_FB_Free_st3}
\big[\big(\mathsf{A}_1^{\mathsf{F}} \mathsf{T}_{0} \mathsf{T}_0\big)_{\mathsf{Fb}} \mathsf{F}\big]\cdot 
\big[\big(x_3 \in \Pi\big)\big]\cdot 
\big[\big(x_{3}\in \mathsf{L}\big)\big]\cdot 
\big[\big(x_{3}\in \mathcal{C}_3\big)\big] \,\,&= \,\,
\big[\big(\mathsf{A}_1^{\mathsf{F}} \mathsf{T}_{0}\mathsf{T}_{0}\mathsf{T}_0\big)_{\mathsf{Fb}}\big]\nonumber \\ 
& + \big[\big(\mathsf{A}_1^{\mathsf{F}}
\mathsf{T}_{0}\mathsf{T}_{0} \big)_{\mathsf{Fb}}\mathsf{F}\big] \cdot \big[\big(x_1 = x_{3}\big)\big] \nonumber \\ 
& + \big[\big(\mathsf{A}_1^{\mathsf{F}}
\mathsf{T}_{0}\mathsf{T}_{0} \big)_{\mathsf{Fb}}\mathsf{F}\big] \cdot \big[\big(x_2 = x_{3}\big)\big] \nonumber \\ 
& + \big[\big(\mathsf{R}_3\big)_{\mathsf{Fb}}\big]. 
\end{align}
Note that in $\mathfrak{M}^1_3$, the following equalities of homology classes hold:
\begin{align}
\big[\big(x_3 \in \Pi\big)\big] \,=\, a+H_3, \qquad  \big[\big(x_3 \in \mathsf{L}\big)\big]  & \,=\, \lambda_1 + H_3,
\qquad \big[ \big(x_3 \in \mathcal{C}_3\big) \big] \,=\, \lambda_3 + 3 H_3, \nonumber\\ 
\big[\big(x_1 = x_{3}\big)\big] & \,=\, H_1^3 + H_1^2 H_2 + H_1 H_2^2 + H_2^3 \qquad 
\textnormal{and} \nonumber \\ 
 \qquad\big[\big(x_2 = x_{3}\big)\big]  & \,=\, H_2^3 + H_2^2 H_3 + H_2 H_3^2 + H_3^3. \label{k8_st3}
\end{align} 
The first case of 
\cref{div_5} now follows immediately from equations \eqref{many_Tks_A1F_pp_FB_Free_st3} 
and \eqref{k8_st3}. 
Hence, we conclude, using \cref{div_5}, 
that in order to compute the 
intersection numbers given by \cref{a1ft0_int_num_st2}, it is sufficient to 
compute the following 
intersection numbers 
\begin{align*}
\big[\big(\mathsf{A}_1^{\mathsf{F}}\mathsf{T}_0 \mathsf{T}_0 \mathsf{T}_0\big)_{\mathsf{Fb}}\big] \cdot \lambda_1^m \lambda_3^j a^l 
\qquad \textnormal{and} \qquad \big[\big(\mathsf{R}_3\big)_{\mathsf{Fb}}\big] \cdot \lambda_1^m \lambda_3^j a^l. 
\end{align*}
We explain in \Cref{conics_computation} how to compute 
$\big[\big(\mathsf{R}_3\big)_{\mathsf{Fb}}\big] \cdot \lambda_1^m \lambda_3^j a^l$. Assuming that
those numbers can be computed, we conclude that in order to compute $N(r,s)$, it is
enough to compute 
\begin{align}
\label{st3_int_num}
\big[\big(\mathsf{A}_1^{\mathsf{F}}\mathsf{T}_0 \mathsf{T}_0 \mathsf{T}_0\big)_{\mathsf{Fb}}\big] \cdot \lambda_1^m \lambda_3^j a^l. 
\end{align}
The next theorem is used for it.

\begin{thm}
\label{theor_CH_FB_Step4}
The following equality of homology classes 
in $\big(\mathsf{M}_{{4}}^1\big)_{\mathsf{Fb}}$ holds:
\begin{align}\label{theor_CH_FB_Step4_eqn}
\big[\big(\mathsf{A}_1^{\mathsf{F}} \mathsf{T}_{0} \mathsf{T}_{0}\mathsf{T}_0\mathsf{F}\big)_{\mathsf{Fb}}\big]\cdot 
\big[\big(x_4 \in \mathsf{L}\big)_{\mathsf{Fb}}\big]\cdot \big[\big(x_4 \in \mathcal{C}_3\big)_{\mathsf{Fb}}\big] \,\,&= \,\,
\big[\big(\mathsf{A}_1^{\mathsf{F}}
\mathsf{T}_{0}\mathsf{T}_0\mathsf{T}_0\mathsf{F}\big)_{\mathsf{Fb}}\big] \cdot \big[\big(x_1 = x_4\big)_{\mathsf{Fb}}\big] \nonumber \\ 
& + \, \, \big[\big(\mathsf{A}_1^{\mathsf{F}}
\mathsf{T}_{0}\mathsf{T}_0\mathsf{T}_0\mathsf{F}\big)_{\mathsf{Fb}}\big] \cdot \big[\big(x_2 = x_4\big)_{\mathsf{Fb}}\big] \nonumber \\
& + \, \, \big[\big(\mathsf{A}_1^{\mathsf{F}}
\mathsf{T}_{0}\mathsf{T}_0\mathsf{T}_0\mathsf{F}\big)_{\mathsf{Fb}}\big] \cdot \big[\big(x_3 = x_4\big)_{\mathsf{Fb}}\big] \nonumber \\ 
&+ \, \, \big[\big(\mathsf{R}^1_4\big)_{\mathsf{Fb}}\big] + \big[\big(\mathsf{R}^2_4\big)_{\mathsf{Fb}}\big] 
+ \big[\big(\mathsf{R}^3_4\big)_{\mathsf{Fb}}\big] \nonumber \\ 
&+\, \, \big[ \big(\big(\mathsf{R}\mathsf{A}_1^{\mathsf{F}}\big)_{4}\big)_{\mathsf{Fb}} \big] 
+ 2 \big[ \big(\big(\mathsf{R}\mathsf{T}_1\big)_{4}\big)_{\mathsf{Fb}} \big]. 
\end{align}
\end{thm}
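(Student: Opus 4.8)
The plan is to prove \Cref{theor_CH_FB_Step4} as the family version of \Cref{theorem_for_many_Tks_A1F_pp_Break_Fin} with $d\,=\,3$ and $n\,=\,d$, running the same argument over the base $\widehat{\mathbb{P}}^3$; I will only isolate the points where the fiber bundle structure intervenes. The first step is to note that every space occurring in \eqref{theor_CH_FB_Step4_eqn} is by construction a fiber bundle over $\widehat{\mathbb{P}}^3$ whose fiber over $[\eta]$ is the corresponding subvariety of $\mathsf{M}^1_4(\eta)\,\cong\,\mathsf{M}^1_4(3)$ from \Cref{CH_one_node_review}, and that the two cutting cycles $\big(x_4\in\mathsf{L}\big)_{\mathsf{Fb}}$ and $\big(x_4\in\mathcal{C}_3\big)_{\mathsf{Fb}}$ are vertical: over each $[\eta]$ they restrict to $\big(x_4\in\mathsf{L}\big)$ and $\big(x_4\in\mathcal{C}_3\big)$ inside the fiber. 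Consequently the intersection on the left of \eqref{theor_CH_FB_Step4_eqn} is a family, parametrised by $[\eta]\in\widehat{\mathbb{P}}^3$, of the intersection analysed in \Cref{theorem_for_many_Tks_A1F_pp_Break_Fin}, and it will be enough to verify the decomposition and the multiplicities on a fiber, uniformly in $[\eta]$, and then glue.

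Next I would carry over the set-theoretic analysis of \Cref{theorem_for_many_Tks_A1F_pp_Break_Fin} fiber by fiber. Inside a fixed fiber $\mathsf{M}^1_4(\eta)$, a point of the intersection either has $x_4$ colliding with one of $x_1,x_2,x_3$, which produces the three terms $\big[\mathsf{A}_1^{\mathsf{F}}\mathsf{T}_0\mathsf{T}_0\mathsf{T}_0\mathsf{F}\big]\cdot\big[(x_i=x_4)\big]$; or else $x_4$ is distinct from $x_1,x_2,x_3$ and lies on both $\mathsf{L}$ and $\mathcal{C}_3$, in which case the degree count of \Cref{remark1} (a cubic meeting a line at four distinct points, or at a node on the line together with three further transverse points, is impossible unless the cubic is reducible with $\mathsf{L}$ a component) forces one of the three reducible configurations $\mathsf{R}^i_4(3)$ (the limiting node is a transverse intersection point $x_i$), $\big(\mathsf{R}\mathsf{A}_1^{\mathsf{F}}\big)_4(3)$ (the node lies off the line) or $\big(\mathsf{R}\mathsf{T}_1\big)_4(3)$ (the residual conic is tangent to $\mathsf{L}$ at the limiting node). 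Each of these loci is again a fiber bundle over $\widehat{\mathbb{P}}^3$, so its closure in $\big(\mathsf{M}^1_4\big)_{\mathsf{Fb}}$ has the expected codimension, and the codimension count of \Cref{remark1}, performed on a fiber and raised by $\dim\widehat{\mathbb{P}}^3=3$, eliminates every other limiting configuration just as in the base case.

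The closure containments and the intersection multiplicities I would obtain by repeating the local computations of \Cref{theorem_for_many_Tks_A1F_pp_Break_Fin} specialised to $d=3$: after taking $\mathbb{P}^2_\eta$ to be a coordinate plane and passing to the affine chart $\mathcal{F}_3^{*}\times\mathbb{C}^2\times\mathbb{C}^4$, one writes the perturbed curve $F$ in the appropriate shape --- $(x-A_1)(x-A_2)(x-A_3)C_0+y\Phi$ in the $\mathsf{R}^i$ case, $Q(x)C_0+y(B_{00}+B_{10}x+P)$ in the $\mathsf{R}\mathsf{T}_1$ case, and $Q(x)C_0+y\Phi$ with the node displaced to $(A,B)$, $B\neq0$, in the $\mathsf{R}\mathsf{A}_1^{\mathsf{F}}$ case --- solves the node equations by the implicit function theorem, and reads off the order of vanishing of $F(a_4,0)$. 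This records multiplicity $1$ for the three $\mathsf{R}^i_4(3)$, multiplicity $1$ for $\big(\mathsf{R}\mathsf{A}_1^{\mathsf{F}}\big)_4(3)$ and multiplicity $2$ for $\big(\mathsf{R}\mathsf{T}_1\big)_4(3)$, while the three $(x_i=x_4)$ terms occur with multiplicity $1$ exactly as in \Cref{theorem_for_many_Tks_A1F_pp}. Since these computations involve only the restriction of the line and the cubic to $\mathbb{P}^2_\eta$, they are uniform in $[\eta]$.

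The step I expect to be the main obstacle is the one genuinely family-theoretic point, namely global transversality: one must know that the intersection on the left of \eqref{theor_CH_FB_Step4_eqn} is transverse in the total space $\big(\mathsf{M}^1_4\big)_{\mathsf{Fb}}$ and not merely along each fiber, and that no spurious component is supported over a proper closed subset of $\widehat{\mathbb{P}}^3$. I would handle this through the fiber bundle structure: the projection $\big(\mathsf{M}^1_4\big)_{\mathsf{Fb}}\to\widehat{\mathbb{P}}^3$ is a submersion and the whole configuration is equivariant under the transitive action of the projective linear group on $\widehat{\mathbb{P}}^3$, so fiberwise transversality propagates to transversality in the total space, and together with the expected-codimension count this forces the class identity \eqref{theor_CH_FB_Step4_eqn}. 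In practice the identity will only be used after pairing with the monomials $\lambda_1^m\lambda_3^{j}a^{l}$ and simplifying by means of the presentation \eqref{Fb_ring}, so it would also suffice to verify it at that level.
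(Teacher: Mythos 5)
Your proposal matches the paper's approach: the paper's entire proof of this theorem is the single sentence that it is the family version of \Cref{theorem_for_many_Tks_A1F_pp_Break_Fin} with an identical proof, and your fiberwise reduction --- verifying the decomposition, closure containments and multiplicities on each fiber $\mathsf{M}^1_4(\eta)$ and propagating over $\widehat{\mathbb{P}}^3$ by homogeneity --- is exactly the intended elaboration of that sentence. The additional attention you give to global transversality in the total space goes beyond what the paper records but does not alter the route.
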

This is the family version of \Cref{theorem_for_many_Tks_A1F_pp_Break_Fin}; the proof is identical.

In contrast to the earlier cases, 
it is now a more non-trivial task to see how this theorem helps us compute the 
intersection numbers given by \cref{st3_int_num}.
We start as usual by observing that \cref{theor_CH_FB_Step4_eqn} 
is equivalent to the fact that on $\mathfrak{M}^1_4$, the following equality of homology classes holds:
\begin{align}\label{y8}
\big[\big(\mathsf{A}_1^{\mathsf{F}} \mathsf{T}_{0} \mathsf{T}_0 \mathsf{T}_0\big)_{\mathsf{Fb}} \mathsf{F}\big]\cdot 
\big[x_4 \in \Pi\big]\cdot 
\big[\big(x_{4}\in \mathsf{L}\big)\big]\cdot 
\big[\big(x_{4}\in \mathcal{C}_3\big)\big] \,\,&= \,\,
\big[\big(\mathsf{A}_1^{\mathsf{F}}
\mathsf{T}_{0}\mathsf{T}_{0} \mathsf{T}_0\big)_{\mathsf{Fb}}\mathsf{F}\big] \cdot \big[\big(x_1 = x_{4}\big)\big] \nonumber \\ 
& + \big[\big(\mathsf{A}_1^{\mathsf{F}}
\mathsf{T}_{0}\mathsf{T}_{0} \mathsf{T}_0\big)_{\mathsf{Fb}}\mathsf{F}\big] \cdot \big[\big(x_2 = x_{4}\big)\big] \nonumber \\ 
& + \big[\big(\mathsf{A}_1^{\mathsf{F}}
\mathsf{T}_{0}\mathsf{T}_{0} \mathsf{T}_0\big)_{\mathsf{Fb}}\mathsf{F}\big] \cdot \big[\big(x_3 = x_{4}\big)\big] \nonumber \\
&+ \, \, \big[\big(\mathsf{R}^1_4\big)_{\mathsf{Fb}}\big] + \big[\big(\mathsf{R}^2_4\big)_{\mathsf{Fb}}\big] 
+ \big[\big(\mathsf{R}^3_4\big)_{\mathsf{Fb}}\big] \nonumber \\ 
&+\, \, \big[ \big(\big(\mathsf{R}\mathsf{A}_1^{\mathsf{F}}\big)_{4}\big)_{\mathsf{Fb}} \big] 
+ 2 \big[ \big(\big(\mathsf{R}\mathsf{T}_1\big)_{4}\big)_{\mathsf{Fb}} \big]. 
\end{align}
Next note that in $\mathfrak{M}^1_4$, the following equalities of homology classes hold: 
\begin{align}
\big[x_4 \in \Pi\big] \,=\, a+H_4, \quad \big[\big(x_4 \in \mathsf{L}\big)\big] \, & =\, \lambda_1 + H_4,
\quad \big[ \big(x_4 \in \mathcal{C}_3\big) \big] \,=\, \lambda_3 + 3 H_4, \nonumber\\ 
\big[\big(x_i = x_{4}\big)\big]\, & =\, H_i^3 + H_i^2 H_4 + H_i H_4^2 + H_4^3, \quad i
\,=\, 1,\,2 ,\,3. \label{k8_st3_st4}
\end{align} 
To compute \cref{st3_int_num}, we first need to compute the following intersection numbers:
\begin{align}
\big[\big(\mathsf{R}^1_4\big)_{\mathsf{Fb}}\big] \cdot \lambda_1^m \lambda_3^j a^l, 
\qquad \big[\big(\mathsf{R}^2_4\big)_{\mathsf{Fb}}\big] \cdot \lambda_1^m \lambda_3^j a^l,& 
\qquad \big[\big(\mathsf{R}^3_4\big)_{\mathsf{Fb}}\big] \cdot \lambda_1^m \lambda_3^j a^l, \nonumber \\ 
\big[ \big(\big(\mathsf{R}\mathsf{A}_1^{\mathsf{F}}\big)_{4}\big)_{\mathsf{Fb}} \big] \cdot \lambda_1^m \lambda_3^j a^l 
\qquad \textnormal{and}& \qquad \big[ \big(\big(\mathsf{R}\mathsf{T}_1\big)_{4}\big)_{\mathsf{Fb}} \big] \cdot \lambda_1^m \lambda_3^j a^l. 
\label{conics_FB_comp2}
\end{align}
These numbers are computed in \Cref{conics_computation}. 
For now, let us assume that the intersection numbers given in \cref{conics_FB_comp2} 
can be computed.

Next, define 
\begin{align}
\mu &:= 
\lambda_1^m \lambda_3^{13-(m+n_1+n_2+n_3+n_4+l)} \cdot 
H_1^{n_1} \cdot H_2^{n_2} \cdot H_3^{n_3} \cdot H_4^{n_4} a^l, \label{mu_defn}\\
T(m, n_1, n_2, n_3, n_4, l) &:= \, \, \Big(\big[\big(\mathsf{R}^1_4\big)_{\mathsf{Fb}}\big] + \big[\big(\mathsf{R}^2_4\big)_{\mathsf{Fb}}\big] 
+ \big[\big(\mathsf{R}^3_4\big)_{\mathsf{Fb}}\big] \nonumber \\ 
&\, \, \, \, \, +\big[ \big(\big(\mathsf{R}\mathsf{A}_1^{\mathsf{F}}\big)_{4}\big)_{\mathsf{Fb}} \big] 
+ 2 \big[ \big(\big(\mathsf{R}\mathsf{T}_1\big)_{4}\big)_{\mathsf{Fb}} \big]\Big) \cdot \mu, 
\qquad \textnormal{and} \label{T_defn} \\ 
\nu &:= \lambda_1^m \lambda_3^{16-(m+n_1+n_2+n_3+n_4+l)} \cdot 
H_1^{n_1} \cdot H_2^{n_2} \cdot H_3^{n_3} \cdot H_4^{n_4} a^l, \label{nu_defn}\\ 
\varphi(m, n_1, n_2, n_3, n_4, l) &:= \big[\big(\mathsf{A}_1^{\mathsf{F}} \mathsf{T}_{0} \mathsf{T}_0 \mathsf{T}_0\big)_{\mathsf{Fb}} \mathsf{F}\big]\cdot \nu. \label{phi_defn}
\end{align}
Intersect both sides of
\cref{y8} with the class $\mu$ (as defined by \cref{mu_defn}). It
follows using equations \eqref{k8_st3_st4}, \eqref{T_defn} and \eqref{phi_defn}
that 
\begin{align}
-\varphi(m, n_1+3, n_2, n_3, n_4, l)& -\varphi(m, n_1, n_2+3, n_3, n_4, l)\nonumber \\
-\varphi(m, n_1, n_2, n_3+3, n_4, l)
&-\varphi(m, n_1+2, n_2, n_3, n_4+1, l)\nonumber \\ 
-\varphi(m, n_1, n_2+2, n_3, n_4+1, l) & -\varphi(m, n_1, n_2, n_3+2, n_4+1, l) \nonumber \\ 
+ 3 \varphi(m, n_1, n_2, n_3, n_4+2, l+1) &- \varphi(m, n_1+1, n_2, n_3, n_4+2, l)     \nonumber \\ 
 -\varphi(m, n_1, n_2+1, n_3, n_4+2, l)  
& -\varphi(m, n_1, n_2, n_3+1, n_4+2, l)  \nonumber \\ 
+ 3 \varphi(m+1, n_1, n_2, n_3, n_4+1, l+1) & + 3\varphi(m+1, n_1, n_2, n_3, n_4+2, l)  \nonumber \\ 
+ \varphi(m, n_1, n_2, n_3, n_4+1, l+1) & + \varphi(m, n_1, n_2, n_3, n_4+2, l)  \nonumber \\ 
+ \varphi(m+1, n_1, n_2, n_3, n_4, l+1) & + \varphi(m+1, n_1, n_2, n_3, n_4+1, l)  = T(m, n_1, n_2, n_3, n_4, l). \label{ms0}
\end{align}
We will now show that the quantity $\varphi(m,\, n_1,\, n_2,\, n_3,\, n_4,\, l)$ 
is computable in terms of the function $T$. 
Write down \cref{ms0} three times; once with $n_4\,=\,1$, then with $n_4\,=\,2$ and finally
with $n_4\,=\,3$. We will refer to these as equations one, two and three respectively. 
Now do the following: equation one minus $3$ times equation two 
plus $9$ times equation three. After simplification, we get 
\begin{align}
-\varphi(m, n_1+1, n_2, n_3, 3, l) & -\varphi(m, n_1, n_2+1, n_3, 3, l) \nonumber \\ 
-\varphi(m, n_1, n_2, n_3+1, 3, l) & + \varphi(m, n_1, n_2, n_3, 3, l) \nonumber \\
+3\varphi(m, n_1+2, n_2, n_3, 3, l) & + 3\varphi(m, n_1, n_2+2, n_3, n_4, l) \nonumber \\ 
+3\varphi(m, n_1+3, n_2, n_3, 3, l) &-9\varphi(m, n_1, n_2+3, n_3, 3, l) \nonumber \\ 
-9\varphi(m, n_1, n_2, n_3+3, 3, l) & = T(m, n_1, n_2, n_3, 1, l) \nonumber \\ 
& -3T(m, n_1, n_2, n_3, 2, l) + 9 T(m, n_1, n_2, n_3, 3, l). \label{mainrec}
\end{align}
Next, note that 
\begin{align}
\varphi(m, n_1, n_2, n_3, n_4, l)\ & =\ \begin{cases} 
0 \qquad \textnormal{if} \quad n_4\neq 3,\\
0 \qquad \textnormal{if} \quad n_1, n_2 ~~\textnormal{or} ~~n_3 \geq 4.  
\end{cases} \label{trivcase}
\end{align}
Finally, we note that 
\begin{align} 
\varphi(m, n_1, n_2, n_3, n_4, l)\ & =\ \varphi(m, \sigma(n_1), \sigma(n_2), \sigma(n_3), n_4, l), \label{symm}
\end{align}
where $\sigma$ is any element of the permutation group $S_3$.

Using all these information, we can now solve the recursion in a brute-force manner.
Start with the following equality, which is obtained by putting $n_1\,=\,n_2\,=\,n_3\,=\,3$ in \cref{mainrec},
and using the fact that $\varphi$ vanishes if $n_i \,\geq\, 4$ for $i\, \in\, \{1,\,2,\,3\}$:
\begin{align*}
\varphi(m,3,3,3,3,l) \,=\, T(m,3,3,3,1,l) - 3T(m,3,3,3,2,l) + 9T(m,3,3,3,3,l)
\end{align*}
Since the right-hand side is only about conics, those numbers can be computed and hence
$\varphi(m,3,3,3,3,l)$ can be computed. Using this, $\varphi(m,2,3,3,3,l)$ can be computed by putting
$n_1\,=\,2,\, n_2\,=\,n_3\,=\,3$ in \cref{mainrec}:
\begin{align*}
    \varphi(m,2,3,3,3,l) \,=\, \varphi(m,3,3,3,3,l) + T(m,2,3,3,1,l) - 3T(m,2,3,3,2,l) + 9T(m,2,3,3,3,l).
\end{align*}
We can continue in this manner and compute $\varphi(m,n_1,3,3,3,l)$ for all values
of $n_1$ (again, keeping in mind that $\varphi$ is possibly non-trivial only for values $n_1
\,\in\, \{0,\,1,\,2,\,3\}$).

We would then want to compute $\varphi(m,3,2,3,3,l)$, but by the symmetry property \cref{symm}, that is the 
same as $\varphi(m,2,3,3,3,l)$ which has been computed. Similarly, the numbers $\varphi(m,3,n_2,3,3,l)$ 
are obtained for any $n_2$ by using symmetry.

Next, compute $\varphi(m,2,2,3,3,l)$ by putting $n_1\,=\,2,\, n_2\,=\,2,\, n_3\,=\,3$ in \cref{mainrec}:
\[
\varphi(m,2,2,3,3,l)\ = \ \varphi(m,3,2,3,3,l) + \varphi(m,2,3,3,3,l) + T(m,2,2,3,1,l)
\]
$$
 - 3T(m,2,2,3,2,l) + 9T(m,2,2,3,3,l).
$$
Proceeding similarly, $\varphi(m,n_1,n_2,3,3,l)$ can be computed for all values of $n_1,\,n_2$.

We can continue this process, i.e., put in the required values of $n_i$ in \cref{mainrec}, and use 
\cref{trivcase}, \cref{symm}, as and when required. This process will end since there are only a finite 
number of steps going from $n_i\,=\,3$ to $n_i\,=\,0$. This solves the recursion problem.

\section{Computation of the characteristic number of conics in $\mathbb{P}^3$} 
\label{conics_computation}

The aim is to compute the characteristic number of conics in $\mathbb{P}^3$ that were needed to complete the 
computation in the last section. We will start by computing the characteristic number of smooth conics.
The intersection space being considered is 
\begin{align*} 
\mathfrak{M}^1_n(2)\ =\ \big(\mathcal{D}_1 \times \mathcal{D}_2\big)_{\mathsf{Fb}}
\times Y^1 \times Y_1 \times \cdots \times Y_n,
\end{align*}
where each $Y^i$ and $Y_j$ is a copy of $\mathbb{P}^3$. 
As before, the hyperplane class of $Y^i$ and $Y_j$ are denoted by $B_i$ and $H_j$ respectively.
The first Chern classes of the hyperplane bundles of $\big(\mathcal{D}_1\big)_{\mathsf{Fb}}$ and 
$\big(\mathcal{D}_2\big)_{\mathsf{Fb}}$ 
are denoted by $\lambda_1$ and $\lambda_2$, respectively, and the hyperplane class of $\widehat{\mathbb{P}}^3$ 
is denoted by $a$. Similar to \cref{fibre_bundle_parameter_space}, we can have the fiber bundle
\begin{align*}
\big(\mathsf{M}^1_n (2)\big)_{\textsf{Fb}}&\longrightarrow \widehat{\mathbb{P}}^3,
\end{align*}
where an element of
the bundle is a tuple $$\left( \Pi,\, \mathsf{L},\, \mathcal{C}_2,\, p, x_1,\, \cdots,\, x_n\right)$$
such that $\mathsf{L},\,\mathcal{C}_2$ and all the points lie on $\Pi$.
A similar argument to that of \cref{Fb_plane} shows that the cycle of
$\left[ \left(\mathsf{M}^1_n(2)\right)_{\textsf{Fb}} \right] $ inside $\mathfrak{M}^1_n(2)$ is given by
\begin{align*} 
\left[ \left(\mathsf{M}^1_n(2)\right)_{\textsf{Fb}} \right] &\ =\ (a+B_1) \cdot (a+H_1) \cdot \cdots \cdot (a+H_n). 
\end{align*}
 Note that the spaces involving conics in the previous section are actually cubics. Indeed, they are given by a conic and the prescribed line. Thus, we can think $\left(\mathcal{D}_1 \times \mathcal{D}_2\right)_{\mathsf{Fb}}$ as a subspace of
$\left(\mathcal{D}_3\right)_{\mathsf{Fb}}$. Thus restricted to $\left(\mathcal{D}_1 \times \mathcal{D}_2\right)_{\mathsf{Fb}}$, we have 
 \begin{align} \label{Chern_class_sum}
\lambda_3 &\ = \lambda_1 + \lambda_2.
\end{align}
Following the discussion in 
\cite[pp.~7--8]{BMS_R_N}, we conclude that the
cohomology ring of $\mathfrak{M}^1_n(2)$ is generated by $a,\, \lambda_1,\, \lambda_2,\, B_1,\,
\cdots,\, B_m,\, H_1,\, \cdots,\, H_n$, together with the relations 
\begin{align}
\lambda_1^3\, & =\, -\lambda_1^2 a - \lambda_1 a^2 - a^3, \nonumber \\ 
\lambda_2^6 \, & =\, -4\lambda_2^5a-10\lambda_2^4a^2-20\lambda_2^3a^3, \nonumber \\ 
B_i^4\, & =\, 0\,\,\, \forall\,\, i \,=\, 1 ,\, \cdots,\, m, \quad
H_j^4\, =\, 0\,\,\, \forall\,\, j \,=\, 1 ,\, \cdots,\, n, \quad \textnormal{and} \quad a^4\,=\,0. \label{Fb_ring_ag}
\end{align}

\subsection{Smooth conics}

Let us compute the following intersection numbers:
\begin{align}\label{conic_R_3}
\big[\big(\mathsf{R}_3\big)_{\mathsf{Fb}}\big]\cdot \lambda_1^m \lambda_3^j a^l.
\end{align} 
Recall that $\left(\mathsf{R}_3\right)_{\mathsf{Fb}}$ is the fiber bundle analogue of
$\mathsf{R}_3(3)$ (cf. \cref{definition_R_n}). It can be thought of as a subbundle of
$\left(\mathsf{M}^1_n(2)\right)_{\textsf{Fb}}$. If
$$\big(\Pi,\, \mathsf{L},\, \mathcal{C}_2,\, p,\, x_1,\, x_2,\,x_3\big)\,\in\, \big(\mathsf{R}_3\big)_{\mathsf{Fb}},$$
then $p$ lies on $\Pi$, $\mathsf{L}$ and $\mathcal{C}_2$ while $x_i$'s lie on $\mathsf{L}$. The incidences
correspond to the cycles $a+B_1$, $\lambda_1+B_1$, $\lambda_2+2B_1$ and $\lambda_1+H_i$ respectively.
The class of $\big(\mathsf{R}_3\big)_{\mathsf{Fb}}$  inside $\mathfrak{M}^1_n(2)$ is given by
\[
\big[\big(\mathsf{R}_3\big)_{\mathsf{Fb}}\big]\ = \ (a+B_1)\cdot (\lambda_1 + B_1) 
\cdot (\lambda_2 + 2 B_1)\cdot \prod_{i=1}^3\left[(a+H_i) (\lambda_1 + H_i)\right].        
\]
Intersection numbers of \cref{conic_R_3} can now be computed using \cref{Chern_class_sum}.

Next, let us consider $\left[\left(\mathsf{R}^1_4\right)_{\mathsf{Fb}}\right]$. It is a class inside $\mathfrak{M}^1_4(2)$. The cycle $[p=x_1]$ is given by 
$$H_1^3 + H_1^2 B_1 + H_1 B_1^2 + B_1^3.$$
A similar argument would yield that the cycle $\left[\left(\mathsf{R}^1_4\right)_{\mathsf{Fb}}\right]$
inside $\mathfrak{M}^1_4(2)$ is given by 
\begin{align*}
\big[\big(\mathsf{R}^1_4\big)_{\mathsf{Fb}}\big] & =      (a+B_1)\cdot (\lambda_1 + B_1) \cdot (\lambda_2 + 2 B_1)\cdot (H_1^3 + H_1^2 B_1 + H_1 B_1^2 + B_1^3) \cdot \prod_{\substack{i=1\\
i \neq 1}}^4\left[(a+H_i) (\lambda_1 + H_i)\right].                                                  
\end{align*}
Similarly, 
\begin{align*}
\big[\big(\mathsf{R}^2_4\big)_{\mathsf{Fb}}\big] & = (a+B_1)\cdot (\lambda_1 + B_1) \cdot (\lambda_2 + 2 B_1)\cdot (H_1^3 + H_1^2 B_1 + H_1 B_1^2 + B_1^3)\cdot \prod_{\substack{i=1\\
i \neq 2}}^4\left[(a+H_i) (\lambda_1 + H_i)\right],                                                 
\end{align*}
and 
\begin{align*}
\big[\big(\mathsf{R}^3_4\big)_{\mathsf{Fb}}\big] & = (a+B_1)\cdot (\lambda_1 + B_1) \cdot (\lambda_2 + 2 B_1)\cdot (H_1^3 + H_1^2 B_1 + H_1 B_1^2 + B_1^3) \cdot \prod_{\substack{i=1\\
i \neq 3}}^4\left[(a+H_i) (\lambda_1 + H_i)\right].   
\end{align*}
All intersection numbers can now be computed. 

\subsection{Nodal conics} 

Here nodal conics are viewed as a pair of unordered lines. We will instead look at the space of ordered lines 
and divide the corresponding intersection number by $2$. The ambient space will be
\[
\mathfrak{M}\ :=\ \big(\mathcal{D}_1 \times \mathcal{D}_1^{\prime} \times \mathcal{D}_1^{\prime \prime} \big)_{\mathsf{Fb}} 
\times Y^1 \times Y_1 \times Y_2 \times Y_3 \times Y_4.
\]
Note that $\mathfrak{M}$ is a subspace of $\mathfrak{M}^1_4(2)$, and the latter is a subspace of $\mathfrak{M}^1_4(3)$.
The classes in $\big(\mathcal{D}_1 \times \mathcal{D}_1^{\prime} \times \mathcal{D}_1^{\prime \prime} \big)_{\mathsf{Fb}}$ 
will be denoted by $\lambda_1,\, \lambda_1^{\prime},\, \lambda_1^{\prime \prime}$ 
and $a$, respectively. The classes in the other factors 
are denoted as usual by $B_1,\, H_1,\, H_2,\, H_3$ and $H_4$, respectively. 
The class $\big[\left(\left(\mathsf{R}\mathsf{A}_1^{\mathsf{F}}\right)_{4}\right)_{\mathsf{Fb}}\big]$ is given by 
\[
\big[\big(\big(\mathsf{R}\mathsf{A}_1^{\mathsf{F}}\big)_{4}\big)_{\mathsf{Fb}}\big]\ =\
 \frac{1}{2}(a+B_1) \cdot (\lambda_1^{\prime} + B_1) \cdot 
(\lambda_1^{\prime \prime} + B_1)  \cdot \prod_{i=1}^4\left[(a+H_i) (\lambda_1 + H_i)\right].
\]
Now note that restricted to this class, we have   
\begin{align}\label{Chern_class_sum_another}
\lambda_3\ & =\ \lambda_1 + \lambda_1^{\prime} + \lambda_1^{\prime \prime}. 
\end{align} 
Using this, we calculate all the relevant intersection numbers.   
 
\subsection{Smooth conics tangent to a line}

We will use the following characterization of tangency. The derivative of the conic along the direction of the line is zero. 
We note that $\big(\big(\mathsf{R}\mathsf{T}_1\big)_4\big)_{\mathsf{Fb}}$ is a subspace of $\mathfrak{M}^1_4(2)$. It is a fiber bundle analogue of the space $\left(\mathsf{R}\mathsf{T}_1 \right)_4$ described in \cref{definition_RT_1}.
An element of this space is a tuple $\left(\Pi,\, \mathsf{L},\, \mathcal{C}_2,\, p,
\, x_1,\, x_2,\,x_3,\,x_4\right)$ such that all the configurations lie on the plane $\Pi$ and $\mathcal{C}_2$
is a smooth conic while $L$ is tangent to $\mathcal{C}_2$ at $p$. We have to find the cycle expression for
the tangency condition. Consider the following space
$$\mathcal{S}\,=\, \{\left([\eta],\,p \right) \,\in\, \widehat{\mathbb{P}}^3 \times Y^1\,\,\big\vert\,\,
\eta(p)\,=\,0 \}.$$
Recall  that $Y^1$ is a copy of $\mathbb{P}^3$.
The natural projection map $\mathcal{S} \,\longrightarrow\,\widehat{\mathbb{P}}^3$ is a fiber bundle. 
Next, assume that $W$ is the relative tangent bundle of $\mathcal{S} \,\longrightarrow
\,\widehat{\mathbb{P}}^3$, that is, $\pi\,:\,W \,\longrightarrow \,\mathcal{S}$ is a rank two vector
bundle such that the fiber over each point $([\eta],\, p)$ is the tangent space of $\mathbb{P}^2_{\eta}$
at the point $p$. The Chern classes of $W$ (cf. \cite[eq. (21)]{BMS_R_N}) are given by 
\[
c_1(W) \,= \,3B_1-a,\ \ \text{ and }\ \ \, c_2(W)\,=\, a^2-2aB_1+3B_1^2.
\]
Consider
$$\mathcal{T}\,:=\, \{\left([\eta],\,\mathsf{L}, \,p\right) \,\in\,
\left(\mathcal{D}_1\right)_{\mathsf{Fb}} \times Y^1\,\,\big\vert\,\,\, \left([\eta],\,p\right)
\,\in\, \mathcal{S},\ p \,\in\, \mathsf{L} \}.$$
Let $\mathbb{L}$ be the line bundle over $\mathcal{T}$ whose fiber over the point
$\left([\eta],\,\mathsf{L},\, p\right)$ is the tangent space of $\mathsf{L}$ at $p$. Then we get the following
short exact sequence of vector bundles over $\mathcal{T}$:
\begin{align*}
0 \,\longrightarrow \,\mathbb{L} \,\longrightarrow\, W \,\longrightarrow \,
\gamma_{\left(\mathcal{D}_1\right)_{\mathsf{Fb}}}^* \otimes \gamma_{Y^1}^* \,\longrightarrow\, 0,
\end{align*}
where the second map is given by $\nabla f_1|_p$ over the point $\left([\eta],\,f_1^{-1}(0),\, p\right)
\,\in \,\mathcal{T}$ and the line is given by a linear homogeneous polynomial $f_1$. It now follows that
the first Chern class of $\mathbb{L}$ is
\begin{align*}
c_1( \mathbb{L})\ &=\ 2B_1-\lambda_1-a.
\end{align*}

For the tangency condition consider the subspace of  $\left(\mathcal{D}_1 \times \mathcal{D}_2\right)_{\mathsf{Fb}} \times Y^1$:
$$\mathcal{U}\,:=\, \{ \left(\Pi,\, \mathsf{L}, \,f_2^{-1}(0),\, p\right)
\,\in\,  \left(\mathcal{D}_1 \times \mathcal{D}_2\right)_{\mathsf{Fb}} \times Y^1\,\,
\big\vert\,\,\,  \left(\Pi,\, \mathsf{L},\, p\right) \,\in\, \mathcal{T},\ f_2(p)\,=\,0\}.$$
Given a tuple $\left(\Pi,\, \mathsf{L}, \,f_2^{-1}(0),\, p\right) $, the conic $f_2^{-1}(0)$ is tangent to
$\mathsf{L}$ at $p$ if and only if 
\begin{align*}
\left(\Pi,\, \mathsf{L}, \,f_2^{-1}(0),\, p\right) \,\in\, \mathcal{U}& & \text{ and } & &
\nabla f_2|_p(u)\,=\,0 \text{ for all }u \,\in\, \mathbb{L}.
\end{align*}
Thus the tangency condition can be expressed as the section of the following bundle 
\begin{align*}
\psi_T\,:\,\mathcal{U} \,\longrightarrow\, \gamma_{\left(\mathcal{D}_2\right)_{\mathsf{Fb}}}^* \otimes \gamma_{Y^1}^{* 2} \otimes \mathbb{L}^*,
\end{align*}
given by $\{\psi_T\left(\Pi,\, \mathsf{L},\, f_2^{-1}(0),\, p\right)\}(f_2\otimes v)\,:=\,\nabla f_2|_p(v)$.
Therefore, the cycle corresponding to the tangency condition in $\left(\mathcal{D}_1 \times
\mathcal{D}_2\right)_{\mathsf{Fb}}$ is given by 
\begin{align*}
& [\mathcal{U}] \cdot e\left(\gamma_{\left(\mathcal{D}_2\right)_{\mathsf{Fb}}}^* \otimes \gamma_{Y^1}^{* 2} \otimes \mathbb{L}^*\right)\\
&\ =\ (a+B_1) \cdot   (\lambda_1 + B_1) \cdot (\lambda_2 + 2 B_1) \cdot \left(a+\lambda_1+\lambda_2\right).
\end{align*} 
Note that $\big(\big(\mathsf{R}\mathsf{T}_1\big)_4\big)_{\mathsf{Fb}}$ is a subspace of $\mathfrak{M}^1_4(2)$ which is again a subspace of $\left(\mathcal{D}_1 \times \mathcal{D}_2\right)_{\mathsf{Fb}} \times Y^1 \times Y_1 \times Y_2 \times Y_3 \times Y_4$.
Thus, we have
\begin{align*}
\big[\big(\big(\mathsf{R}\mathsf{T}_1\big)_4\big)_{\mathsf{Fb}}\big]\ & =\
(a+B_1) \cdot   (\lambda_1 + B_1) \cdot (\lambda_2 + 2 B_1) \cdot   (a+\lambda_1+\lambda_2) \cdot \prod_{i=1}^4\left[(a+H_i) (\lambda_1 + H_i)\right].                                            
\end{align*} 
Therefore, all the relevant intersection numbers of \cref{conics_FB_comp2} can be computed from the above cycle expressions. We use the identities of \cref{Chern_class_sum,Chern_class_sum_another} to calculate them.

\section{Acknowledgements}
We are grateful to Anuvertika Pandey 
for several useful discussions related to the project. 
Indranil Biswas is partially supported by a J. C. Bose Fellowship (JBR/2023/000003). 
Apratim Choudhury 
is funded by the Deutsche Forschungsgemeinschaft (DFG, German Research
Foundation) under Germany´s Excellence Strategy – The Berlin Mathematics
Research Center MATH+ (EXC-2046/1, project ID 390685689, BMS Stipend).
Nilkantha Das is supported by the INSPIRE faculty fellowship (Ref No: IFA21-MA 161) funded by the DST, Govt. 
of India. He would also 
like to thank NISER Bhubaneswar for the hospitality during a visit where a part of the 
project has been carried out.

 \bibliographystyle{siam} 
\bibliography{Degreed}

\end{document}